\def\H{{\mathbb{H}^3}}
\def\.{\mskip1mu} %spacing
\def\edgepath#1{\edge{#1}}
\def\polygon#1{(#1)}
\def\V{{\bm{V}}}
\def\N_#1{{N^\boxtimes_{#1}}}
\def\tN_#1{{N^\squareslash_{#1}}}
\def\bdN_#1{{N^\square_{#1}}}
\def\ssetminus{{\,\setminus\,}}
\newlength\squareheight
\newcommand\squareslash{{\tikz{\draw (0,0) rectangle (\squareheight,\squareheight);\draw(0,0) -- (\squareheight,\squareheight)}}}
\def\RR{\mathbb{R}}
\def\EE{\mathbb{E}}
\def\HH{\mathbb{H}}
\def\SS{\mathbb{S}}
\theoremstyle{definition}
\newtheorem{thm}{Theorem}[section]
\newtheorem{prop}[thm]{Proposition}
\newtheorem{lem}[thm]{Lemma}
\newtheorem{cor}[thm]{Corollary}
\newtheorem{defn}[thm]{Definition}
\newtheorem{exam}[thm]{Example}
\theoremstyle{definition} 
\theoremstyle{definition} 
\theoremstyle{definition}   \newtheorem*{thmfact}{Theorem}
\theoremstyle{definition}   \newtheorem*{conj*}{Conjecture}
\theoremstyle{definition} \newtheorem{conj}{Conjecture}
\theoremstyle{definition}   \newtheorem{ques}[conj]{Question}
\theoremstyle{remark}   
\theoremstyle{remark}
\begin{document}

\begin{frontmatter}

	\title{Rigidity of nonconvex polyhedra with respect to \\ edge lengths and dihedral angles}

%% Group authors per affiliation:
\author{Yunhi Cho\fnref{ycho}}
\fntext[ycho]{%\address
	Department of Mathematics, University of Seoul, Seoul 02504 , Korea.
	%\thanks
	Supported by the 2021 sabbatical year research grant of the University of Seoul.
}
\ead{yhcho@uos.ac.kr}

	\author{Seonhwa Kim\fnref{skim}}
\fntext[skim]{	%\address
Department of Mathematics, 	University of Seoul, Seoul 02504, Korea.
%\thanks
Supported  by Basic Science Research Program through the
National Research Foundation of Korea(NRF) funded by the Ministry of Education(2022R1I1A1A01063774).
	}
	\ead{seonhwa17kim@uos.ac.kr}

	\begin{abstract}
		We prove that every three-dimensional polyhedron is uniquely determined by its dihedral angles and edge lengths, even if  nonconvex or self-intersecting, under two plausible sufficient conditions: (i) the polyhedron has only convex faces and (ii) it does not have partially-flat vertices, and under an additional technical requirement that (iii) any triple of vertices is not collinear. The proof is consistently valid for Euclidean, hyperbolic and spherical geometry, which takes a completely  different approach from the argument of the Cauchy rigidity theorem.  Various counterexamples are provided that arise when these conditions are violated, and  self-contained proofs are presented whenever possible. As a corollary,  the rigidity of several families of polyhedra is also established. 
		Finally, we propose two conjectures: the first suggests that Condition (iii) can be removed, and the second concerns the rigidity of spherical nonconvex polygons.		\end{abstract}
	\begin{keyword}
Rigidity, Polyhedral combinatorics, Nonconvex polyhedra, Sperical polygons
\MSC[2010] 52B10 \sep  	52C25 \sep 05C10 
	\end{keyword}

\end{frontmatter}

\tableofcontents

\section{Introduction}\label{sec:Intro}
We investigate a rigidity problem for polyhedra in Euclidean,  hyperbolic  and  spherical 3-space with respect to  their dihedral angles and edge lengths, even if they are nonconvex or self-intersecting. We prove that, under a technical assumption prohibiting collinear triple vertices, every polyhedron with only convex faces and no partially-flat vertex is uniquely determined up to congruence by its dihedral angles and edge lengths.
While our approach is elementary, it is somewhat intricate. In particular, the proof relies solely on the combinatorics of the polyhedral graph and  2-dimensional spherical geometry of each vertex's local neighborhood.
Thus, the method is potentially applicable not only to the above three geometries but also to more general situations. For instance, we foresee that our method could be applied to a certain class of Riemannian geometry where the local neighborhood at each point induces a spherical structure, and  totally-geodesic planes with convex polygons are well-defined.
 However, we do not explore in this paper the range of geometries to which our method might be applicable, which we leave as a topic for further study.
 
 \subsection{A historical review}
 It  is  fundamental to study  existence and  uniqueness of polyhedra (or manifold with singularities) under a prescribed geometric condition, which may be the most classical and   contemporary issue in mathematics \cite{fisher_local_2007}. 
Many of  results in this area may be traced back to  Cauchy's rigidity theorem.
 \begin{thmfact}[A. Legendre, A. Cauchy]\label{thm:cauchy}
 	Every convex  polyhedron is uniquely determined by its edge lengths and facial angles\footnote{  Each face of a polyhedron is uniquely determined by its  edge lengths and facial angles.}
 \end{thmfact}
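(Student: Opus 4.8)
The plan is the classical two--part scheme. \emph{Reduction.} By the footnote, to know the edge lengths and facial angles of a polyhedron is to know its combinatorial type together with the congruence class of each face, so it suffices to show that two combinatorially equivalent convex polyhedra $P,P'$ whose corresponding faces are congruent have equal corresponding dihedral angles. Granting this, one builds an isometry $P\to P'$ face by face along the (connected) edge graph: each new face is pinned down by an already placed congruent neighbour, the common edge of known length, and the prescribed dihedral angle. So suppose for contradiction that some dihedral angle of $P$ differs from the corresponding one of $P'$, and colour each edge $e$ of the common graph $+$, $-$, or $0$ according as the dihedral angle of $P$ along $e$ exceeds, is smaller than, or equals that of $P'$. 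Deleting the $0$--edges leaves a nonempty graph $G$ embedded in $S^2$, all of whose edges are coloured $+$ or $-$.

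\emph{Local (arm) lemma.} Next I would show that at every vertex $v$ of $G$ the cyclic sequence of colours of the incident edges changes sign at least four times. Let $S_v\subset S^2$ be the small spherical polygon cut out of $P$ near $v$; it is convex with perimeter $<2\pi$ because $v$ is a convex vertex, its side lengths are the facial angles at $v$, and its interior angles are the dihedral angles along the edges at $v$. The polygon $S'_v$ for $P'$ has the same side lengths. Cauchy's arm lemma, in the spherical form due to Steinitz (and Schoenberg--Zaremba), says that if two convex polygonal arcs have equal corresponding sides and their corresponding angles are weakly ordered in one direction, then the end--to--end distances are ordered the same way, with equality exactly when all angles coincide. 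If $v$ had at most two sign changes, cut the boundary of $S_v$ at the (at most two) vertices where the sign changes: along the arc where $P$'s angles dominate, the corresponding chord of $S_v$ is $\ge$ that of $S'_v$; along the complementary arc it is $\le$; hence the two chords are equal, the equality case forces all the relevant angles to agree, and so every edge at $v$ is a $0$--edge --- contradicting $v\in G$. (The zero--sign--change case is the same, with the single ``arc'' being the whole closed polygon, which cannot be strictly larger than $S'_v$ since both close up.)

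\emph{Global (Euler) lemma.} Then I would finish by showing that no $2$--coloured graph on $S^2$ can have at least four sign changes at every vertex. Count the \emph{bichromatic corners}, that is, the corners (a vertex together with the face and the two edges bounding that corner) at which the two edges have different colours. Summing over vertices gives at least $4V$. Summing over faces, a $k$--gonal face contributes at most $2\lfloor k/2\rfloor\le 2k-4$ (using $k\ge 3$), so the total is at most $\sum_f(2k_f-4)=4E-4F=4V-8$ by Euler's formula. Thus $4V\le 4V-8$, which is absurd; this contradiction shows all dihedral angles agree and finishes the proof.

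\emph{Main obstacle.} Two points are genuinely delicate rather than routine. First, the spherical arm lemma: the naive induction on the number of vertices fails once an angle crosses $\pi$, and the correct argument (Steinitz's) must keep control of the convexity of the intermediate arcs and of the perimeter bound. Second, after deleting the $0$--edges the faces of $G$ need not be bounded by simple cycles, so the inequality $k_f\ge 3$ used in the Euler count must be re--established with care; this is exactly the gap in Cauchy's original proof that Steinitz repaired. Everything else --- the reduction, the sign labelling, and the two counts --- is straightforward.
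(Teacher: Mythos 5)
The paper does not prove this statement at all: it is quoted in the introduction as a classical fact (Legendre--Cauchy), and the authors explicitly distance their own methods from it (``We do not use Cauchy's Arm Lemma or any kind of sign counting''). So there is no in-paper proof to measure you against; what you have written is the standard Cauchy--Steinitz argument, and as an outline it is correct. Your reduction to the statement that congruent corresponding faces force equal dihedral angles, the $\{+,-,0\}$ edge labelling, the vertex-figure form of the arm lemma giving at least four sign changes at each vertex of the nonzero subgraph $G$, and the corner count $4V\le\sum_f 2\lfloor k_f/2\rfloor\le 4E-4F=4V-8$ are all the right steps, and you correctly identify the two places where the real work lives (the equality/degenerate cases of the spherical arm lemma, and the fact that faces of $G$ need not be simple cycles after deleting the $0$-edges, so the Euler count must be redone on a suitable component). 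One small imprecision: the cuts separating the $+$ arc from the $-$ arc of the vertex figure $S_v$ should be taken at sides of $S_v$ (or at $0$-labelled vertices), not literally ``at the vertices where the sign changes,'' but this does not affect the argument. It is worth noting that your route is genuinely disjoint from the paper's machinery: the authors' vertex-reduction scheme (Theorem~2.9 and its nonconvex generalization) concerns rigidity with respect to dihedral angles and edge lengths, which is different data from the edge lengths and facial angles appearing here, so their method would not yield Cauchy's theorem and your classical approach is the appropriate one.
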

The case of nonconvex polyhedra has been also considered. While R. Connelly found a flexible polyhedron \cite{connelly_counterexample_1977}, it is rare to find flexible polyhedra \cite{gluck_almost_1975}. Recently,
 it has been suggested to study the rigidity of nonconvex polyhedra  under weakly-convex condition \cite{izmestiev_infinitesimal_2010}.
On the other hand, in 1968, Stoker proved the following  with an aid of Cauchy's method 
(for a modern reference,  see \cite{pak_lectures_2009} by I. Pak).
\begin{thmfact}
	[J.J. Stoker \cite{stoker_geometrical_1968}]\label{thm:stoker}
	Every strictly-convex polyhedron is uniquely determined by its dihedral angles and edge lengths.
\end{thmfact}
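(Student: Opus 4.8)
The plan is to run Cauchy's method, but labeling the \emph{corners} (vertex--face incidences) of the polyhedron rather than its edges. Let $P$ and $P'$ be combinatorially equivalent strictly-convex polyhedra with equal corresponding edge lengths and equal corresponding dihedral angles; we must show $P \cong P'$. Since Theorem~\ref{thm:cauchy} already gives congruence once all corresponding \emph{facial} angles agree, it suffices to prove the latter. Suppose not: to each corner $(v,F)$ (a vertex $v$ lying on a face $F$) attach the sign $\varepsilon(v,F) \in \{+,-,0\}$ of $\alpha_{v,F} - \alpha'_{v,F}$, where $\alpha$ denotes facial angle; by assumption not all signs vanish. Reading these signs in cyclic order around a vertex $v$ over the faces at $v$ yields a cyclic word; let $c_v$ be its number of sign changes, ignoring zeros, and define $c_F$ likewise for the cyclic word read around a face $F$.

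The geometric input consists of two arm lemmas. First, at a face: $F$ is a convex polygon with the same edge lengths in $P$ and $P'$, and its vertex angles are the $\alpha_{v,F}$; the classical argument with Cauchy's arm lemma --- cut $\partial F$ at two suitable vertices into an arc whose interior angle changes are all $\ge 0$ with one strict and a complementary arc whose interior angle changes are all $\le 0$, then compare the common chord in $P$ and $P'$ --- shows $c_F \ge 4$ for every face $F$ carrying a nonzero corner. Second, at a vertex: the link of $v$ is a convex spherical polygon whose side lengths are the $\alpha_{v,F}$ and whose vertex angles are the dihedral angles along the edges at $v$, so here the \emph{angles} of the spherical polygon are common to $P$ and $P'$. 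Passing to the spherical polar dual interchanges side lengths and vertex angles (each via $x \mapsto \pi - x$): the dual is again a convex spherical polygon inside an open hemisphere, it has the \emph{same} side lengths in $P$ and $P'$, and its cyclic angle-change word equals, up to a global sign, the side-length-change word of the original link, hence has $c_v$ sign changes. Applying the spherical Cauchy arm lemma to the dual exactly as above gives $c_v \ge 4$ for every vertex $v$ carrying a nonzero corner.

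For the combinatorial input, draw $P$ on $S^2$ and form the radial graph $\Gamma$: its vertices are the vertices of $P$ together with one new point $c_F$ placed inside each face $F$, and its edges join $c_F$ to each vertex of $F$ (no other edges). Then $\Gamma$ is a plane graph whose edges are in bijection with the corners of $P$, and the cyclic word of signs around an original vertex $v$ of $\Gamma$, respectively around $c_F$, is precisely the one computing $c_v$, respectively $c_F$. Labeling each edge of $\Gamma$ by the sign of its corner --- not all zero --- and invoking Cauchy's combinatorial (``sign-change'') lemma as in \cite{pak_lectures_2009} produces a vertex of $\Gamma$ incident to a nonzero edge around which there are at most two sign changes: that is, a vertex or face of $P$ carrying a nonzero corner with $c_v \le 2$ or $c_F \le 2$. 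This contradicts the two arm lemmas, so every corner is zero, all corresponding facial angles agree, and Theorem~\ref{thm:cauchy} gives $P \cong P'$.

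The part I expect to be the genuine obstacle is the second arm lemma. The Euclidean arm lemma and the face reduction are classical, but the vertex-link counterpart requires the spherical arm lemma together with the fact that the spherical polar dual of a convex spherical polygon inside an open hemisphere is again of that form --- statements that truly use the strict-convexity hypothesis and fail for links that are too large. Strict convexity is likewise what keeps the relevant inequalities strict and lets one dispose of the degenerate configurations in the arm-lemma arguments (a lone nonzero corner, or a pair of opposite nonzero corners), where the cuts must be made at the neighbors of the differing vertices rather than at those vertices themselves.
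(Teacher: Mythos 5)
Your proof is correct, but it is not the paper's proof --- it is essentially the classical Cauchy--Stoker argument (the one the authors cite Pak for, and explicitly set out to avoid: ``we do not use Cauchy's Arm Lemma or any kind of sign counting''). Your route: label corners of the radial (vertex--face incidence) graph by the sign of the facial-angle discrepancy, prove at least four sign changes around every face (Euclidean/hyperbolic/spherical arm lemma on the face, which has fixed edge lengths) and around every vertex (spherical arm lemma applied to the polar dual of the vertex figure, which has fixed side lengths because the dihedral angles are fixed), and contradict Cauchy's combinatorial lemma; then finish with Legendre--Cauchy once all facial angles agree. All the ingredients check out under strict convexity: faces and vertex figures are strictly convex polygons contained in open hemispheres (Theorem~\ref{convexsphpoly} supplies this in the spherical case), so the polar dual and both arm lemmas apply, and the degenerate cut positions you flag are handled in the standard way. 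The paper instead proves Theorem~\ref{convexRigidity} by \emph{vertex reduction}: Euler's formula guarantees a ``rigid'' vertex $v$ meeting at most three non-triangular faces (Lemma~\ref{lem:existencerigid}); the rigidity of strictly-convex spherical polygons given all angles and $n-3$ sides (Theorem~\ref{rigiditySphPolygon}) pins down the triangularized neighborhood of $v$ (Proposition~\ref{rigidvertexthm}); one then deletes $v$, takes the convex hull of the remaining vertices, and inducts on the vertex count. The trade-off is clear: your argument is shorter and classical, but it is wedded to convexity --- the arm lemmas and the polarity step fail for nonconvex links --- whereas the paper's induction is engineered precisely so that it survives (with substantial extra work on nonconvex spherical quadrilaterals) the generalization to nonconvex polyhedra that is the main theorem of the paper. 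One small simplification available to you: once all facial angles, edge lengths, and dihedral angles agree, congruence already follows from the elementary developing-map observation in Section~1.2 of the paper, so the final appeal to Theorem~\ref{thm:cauchy} is not strictly necessary.
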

Furthermore, Stoker also studied this kind of rigidity for a class of nonconvex polyhedra with 4-valent saddle vertices and convex vertices \cite[Theorem III]{stoker_geometrical_1968}.
Our result can therefore be seen as a follow-up to Stoker's original research on nonconvex polyhedra by dihedral angles and edge lengths; however, the approach is completely different. We do not use Cauchy's Arm Lemma or any kind of sign counting to study rigidity.

 Historically, Stoker's  the most influential statement   would be that
\begin{conj*}
	[J.J. Stoker\footnote{In fact, Stoker himself did not state the conjecture regarding hyperbolic polyhedra. However, he is often credited with this conjecture, as people attribute the idea of the rigidity  by dihedral angles to him.}]\label{conj:stoker}
	Every hyperbolic strictly-convex polyhedron is uniquely determined by its dihedral angles. 
\end{conj*}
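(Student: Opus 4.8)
\emph{A possible line of attack.} The plan is to reduce the conjecture, via the main theorem of this paper, to a single assertion special to hyperbolic geometry: that the dihedral angles \emph{alone} already determine the edge lengths. Note first that a strictly-convex polyhedron in any of the three geometries automatically has only convex faces, has no partially-flat vertex, and has no three collinear vertices --- the last because a vertex of a strictly convex body is an extreme point and so can never lie in the convex hull of two other vertices. Hence our main theorem applies, and it is enough to prove: \emph{if two strictly-convex hyperbolic polyhedra of the same combinatorial type have equal corresponding dihedral angles, then they have equal corresponding edge lengths.} This reduction genuinely uses hyperbolic (or spherical) geometry: in $\mathbb{E}^3$ a homothety preserves every dihedral angle while rescaling every edge length, so there the best one could hope for is rigidity up to similarity.

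For the reduced statement the natural setup is local-to-global. The link of a vertex $v$ of a polyhedron $P$ is a convex spherical polygon $P_v$ whose interior angles are the dihedral angles of $P$ along the edges through $v$ and whose side lengths are the face angles of $P$ at $v$; and each face $F$ of $P$ is a convex hyperbolic polygon whose side lengths are the edge lengths along $\partial F$ and whose interior angles are the corresponding face angles. Since all dihedral angles are prescribed, the angles of every link $P_v$ are known, and the unknown face angles and edge lengths must simultaneously make every $P_v$ close up as a spherical polygon and every $F$ close up as a hyperbolic polygon. Even in the simplest non-trivial case --- all vertices $3$-valent, so that each $P_v$ is a spherical triangle and hence determined by its angles --- one recovers the face angles of $P$, but not yet its edge lengths unless the faces are themselves triangles; and in general the links are spherical $n$-gons with $n\ge 4$, which are not angle-rigid, while large hyperbolic faces are not determined by their angles either. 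So any proof must exploit the coupling between faces and links across the entire polyhedral graph, where the resulting nonlinear system is massively overdetermined. Proving that this system has a unique solution in the strictly-convex regime is the heart of the matter, and it is exactly here that the conjecture has so far resisted.

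The most developed route is variational. Fix the combinatorial type $\Gamma$, let $\mathcal{C}_\Gamma$ be the space of strictly-convex hyperbolic polyhedra of type $\Gamma$ up to isometry, and let $\Phi\colon\mathcal{C}_\Gamma\to\mathbb{R}^{E(\Gamma)}$ record the dihedral angles; one wants $\Phi$ injective. Step one is infinitesimal rigidity --- $d\Phi$ has trivial kernel everywhere. The Schläfli differential formula
\[
	dV(P)\;=\;-\frac{1}{2}\sum_{e\in E(\Gamma)}\ell_e\,d\theta_e
\]
is the basic ingredient, but it does not close the argument by itself: one must run a second-order (Hessian) argument showing that an infinitesimal deformation fixing all $\theta_e$ forces all $\ell_e$ to be stationary and then constant. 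This infinitesimal step is essentially known (Mazzeo and Montcouquiol, in the broader setting of hyperbolic cone-manifolds). Step two is to upgrade infinitesimal rigidity to global rigidity: prove $\Phi$ is proper, conclude it is a covering onto its image, and deduce injectivity from connectedness of $\mathcal{C}_\Gamma$ together with simple connectedness of $\Phi(\mathcal{C}_\Gamma)$ inside the pertinent ``admissible angle'' region. The main obstacle, on any route, is precisely this passage from local to global: rigorously ruling out or controlling all degenerations of a strictly-convex hyperbolic polyhedron of type $\Gamma$ (edges collapsing, faces flattening, vertices escaping to infinity), and understanding the topology of $\Phi(\mathcal{C}_\Gamma)$. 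A complementary attack, closer in spirit to the present paper, would be to first establish our second conjecture on the rigidity of spherical polygons in a sufficiently quantitative form and then feed it into the coupled face--link system above; but since spherical $n$-gons are genuinely not angle-rigid for $n\ge 4$, such a route must bring in the global combinatorics of $\Gamma$ in an essential way, beyond the purely local analysis used here.
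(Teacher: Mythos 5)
The statement you have been asked to prove is not a theorem of the paper at all: it is stated there as the (unnumbered) Stoker conjecture, and the paper explicitly records that it remains open, citing only Schlenker's spherical counterexample and the Mazzeo--Montcouquiol \emph{local} rigidity result for the hyperbolic case. There is therefore no proof in the paper to compare yours against --- and, as you yourself concede at several points, your text is not a proof either. Your opening reduction is correct as far as it goes: a strictly-convex polyhedron automatically has convex faces, no partially-flat vertices, and no collinear triple of vertices, so Theorem~\ref{mainThm} (or already Stoker's theorem on rigidity by dihedral angles and edge lengths, quoted in the introduction) reduces the conjecture to the assertion that the dihedral angles alone determine the edge lengths. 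But that reduction is essentially a restatement of the conjecture rather than progress on it: the entire content of the Stoker conjecture \emph{is} the claim that angles determine lengths, and everything you write afterwards --- the coupled face--link system, infinitesimal rigidity via a second-order Schl\"afli argument, properness of the angle map, the covering-space and connectedness arguments --- is a catalogue of strategies with none of its steps carried out. In particular, the passage from infinitesimal to global rigidity (controlling degenerations of strictly-convex polyhedra of a fixed combinatorial type and understanding the topology of the image of the angle map) is precisely the part that has resisted all known attacks, and you supply no argument for it.

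To be clear about what is and is not salvageable: your observation that the Euclidean case fails by homothety is correct and shows the problem is genuinely non-Euclidean; your appeal to Mazzeo--Montcouquiol for the infinitesimal step is consistent with what the paper itself cites, but local rigidity does not yield the global uniqueness the conjecture demands; and your remark that vertex links are spherical $n$-gons which are not angle-rigid for $n\ge 4$ correctly identifies why the paper's own vertex-reduction machinery cannot be applied with angles alone. The proposal is a reasonable survey of the state of the art, but it contains no proof, and the statement should continue to be regarded as open.
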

While a flexible counterexample for the spherical case has been discovered by J.-M. Schlenker \cite{schlenker_dihedral_2000} and the local rigidity of the hyperbolic case has been proved  by R. Mazzeo and G. Montcouquiol \cite{mazzeo_infinitesimal_2011}, the conjecture still remains unsolved, to the best of the authors' knowledge.
Intriguingly, it has recently turned out that the Stoker conjecture, a fundamentally classical subject, is related to the volume conjecture, a modern topic associated with quantum representations \cite{belletti_volume_2023}.
On the other hand, one may also consider a rigidity of polyhedra given only edge lengths and not dihedral angles. 
This leads to a problem of rigidity for \emph{frameworks}, which are another generalization of polyhedra, and this topic has also been actively studied (for example, see \cite{roth_rigid_1981}). 

Lastly, we would like to note that the moduli space of geometric polyhedra with  a fixed combinatorial type  can be embedded in the Cartesian product of edge lengths and dihedral angles when the rigidity holds.
In this parametrization, the famous Schl\"afli differential formula, given by
$ \pm 2 d V = \sum_{e\in E} l_e d\theta_e,$ 
holds and is a Liouville 1-form in the moduli space. Here, $V$ is the volume of the polyhedron, $l_e$ is the length of edge $e$, and $\theta_e$ is the dihedral angle at edge $e$.
This formula provides a reason why studying dihedral angles and edge lengths can be interesting and potentially helpful for further research connecting polyhedral combinatorics and non-Euclidean geometry. 

 \subsection{Definitions and terminology}\label{sec:defterm}
In this article, the term `geometry' refers to Euclidean, hyperbolic,  or spherical geometries, and our main objects of study are 3-dimensional geometric polyhedra. Before presenting our results, we  briefly clarify what we mean by polyhedra, taking nonconvex cases into account. This is because a rigorous definition of nonconvex polyhedra varies slightly depending on the mathematical context or research area.

A \emph{combinatorial polyhedron} or \emph{polyhedral graph} is defined as a $3$-connected simple planar graph\footnote{In this context, `simple graph' refers to a graph without 1-cycles or 2-cycles. `3-connected' denotes that no set of fewer than three vertices can separate the graph into disconnected components.} embedded on the oriented 2-sphere.
According to Steinitz's theorem, this combinatorial structure is equivalent to the vertex-edge-face structure arising from the boundary of the convex hull formed by a finite set of vertices in Euclidean 3-space.\!\footnote{Steinitz's theorem can be applied without any issues in hyperbolic geometry. However, caution is required when dealing with spherical geometry due to the issue of convexity. In this paper, we impose the requirement that every convex polyhedron should be contained within the interior of a half-space. This additional condition, which is discussed in the latter part of this section, ensures the universality of Steinitz's theorem across all three geometries.}

A \emph{geometric polyhedron} or simply a \emph{polyhedron} is a (possibly self-intersecting) continuous image of a combinatorial polyhedron in a geometric 3-space (i.e., Euclidean 3-space $\EE^3$, hyperbolic 3-space $\HH^3$, or  spherical 3-space $\SS^3$) with the restriction that each face is embedded into a totally geodesic plane and each edge is embedded into a geodesic.  Since the 2-sphere of the domain is oriented and each face is a topological disk in our setting, dihedral angles and  facial angles are defined naturally.
When we say that entities like vertices or faces are \emph{collinear} or \emph{coplanar} in spherical or hyperbolic space, it means that they lie on a common geodesic or a totally geodesic plane, analogous to Euclidean space.

Similarly, a (geometric) \emph{polygon} is also defined as a continuous image of a (combinatorial) polygon into a geometric space, with the restriction that each edge is embedded as a geodesic. 
When we are considering a polygon on an oriented totally geodesic surface, we can distinguish between the left side and the right side at each point on the polygon. Once  a side is fixed,   \emph{interior angles} are defined unambiguously, even if the polygon is self-intersecting.

We may use the terms `polyhedron' and `polyhedral graph' interchangeably without making a clear distinction unless it would cause confusion.
A geometric polyhedron is sometimes referred to as a \emph{geometric realization} or simply a \emph{realization} of a polyhedral graph, and a polyhedral graph is sometimes referred to as  the \emph{combinatorial type} of a geometric polyhedron.

Two polyhedra are considered to be \emph{the same} or \emph{congruent} if there exists an isometry $\Phi$ of $\EE^3$, $\SS^3$, or $\HH^3$ that maps one realization $\phi_1$ to another realization $\phi_2$. Note that this doesn't necessarily imply that $\Phi \circ \phi_1 = \phi_2$ as a function due to  reparametrization issue, but it does mean that the images of $\Phi \circ \phi_1$ and $\phi_2$ just coincide on each face. Let $\V(P)$ denote the set of vertices of $P$. It is obvious that the following three conditions are equivalent: (i) two geometric realizations $\phi_1$ and $\phi_2$  are congruent; (ii) $\phi_1(\V(P))=\phi_2(\V(P))$; (iii) all corresponding edge lengths, facial angles, and dihedral angles are the same.

 If all realizations of a polyhedron or polygon under certain constraints, such as fixed edge lengths, always result in congruent polyhedra or polygons, then we say that the polyhedron is uniquely determined, or \emph{rigid}, by the constraints. For example, we say that every triangle is uniquely determined by its three edge lengths ($SSS$\footnote{This is a term commonly used in the context of triangle congruence in school mathematics. It is worth noting that there exist certain issues associated with  $SSS$, $ASA$, $SAS$, and $AAA$ congruence conditions if you consider nonconvex spherical polygons: these will be discussed in Section~\ref{sec:trianglerigidity}.}) in Euclidean or hyperbolic  geometries.

 The term \emph{convex}, when applied to a polygon or polyhedron, indicates that (C1) it has no self-intersections, (C2) its interior angles or dihedral angles are all less than or equal to $\pi$, (C3) it is contained entirely within the interior of a half-space (or hemisphere in the case of spherical geometry).
 The term \emph{strictly-convex} implies that it is convex and its angles are less than $\pi$. 
 In Euclidean space, it is well known that every convex polygon or polyhedron defined by the above three conditions is equivalent to the convex hull of its vertices or the intersection of half-spaces (see, for example,  \cite[Section 1]{alexandrov_convex_2005}).
 This equivalence is also clearly valid for  $\H$ when using the Kleinian model.
 Condition (C3) is trivially satisfied for Euclidean and hyperbolic geometry since our polygon or polyhedron is always compact from the definition.
 In spherical geometry, additional consideration is needed. Note that every polygon satisfying conditions (C1) and (C2) is contained within a hemisphere due to Theorem~\ref{convexsphpoly}. Condition (C3) is necessary to exclude cases like bigons, which appears in Example~\ref{sphBigon}. In particular, under condition (C3), any geometric realization of every combinatorial polyhedron occurs simultaneously in all three geometries,   by using the Kleinian model for hyperbolic geometry and the gnomonic projection for spherical geometry \cite{ratcliffe_foundations_2006}.
Finally, we remind about bigons in our setting for spherical geometry. Bigons can be considered spherical polygons but they can never be convex due to condition (C3). Regardless of whether a spherical polyhedron is convex or nonconvex, bigon faces are not allowed due to the requirement that the 1-skeleton is a polyhedral graph.

 \subsection{Main results}
 
 Let an angle be  \emph{singular} if it is $0$ or $\pi$.
Let an edge $e$ of a polyhedron be \emph{flat} if its dihedral angle is singular, i.e. the adjacent two faces of $e$ are coplanar.
 Let a vertex $v$ be  \emph{flat} if all adjacent edges are flat, i.e., all adjacent faces of $v$ are coplanar.
 Let a vertex $v$ be  \emph{partially-flat} if some faces adjacent to a vertex $v$ are coplanar. Note that a vertex may be partially-flat  even though  no adjacent edge is flat. It is clear that existence of a flat edge implies the adjacent vertices are partially-flat.
 Our main result is stated  as follows:
 \begin{thm}\label{mainThm}
 	If a geometric, possibly nonconvex and self-intersecting, polyhedron satisfies that (i) every faces is convex, (ii) no vertices  are partially-flat and (iii) any triple of vertices is not collinear, then the polyhedron is uniquely determined by its dihedral angles and edge lengths.	
 \end{thm}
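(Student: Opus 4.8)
The plan is to reduce everything to the \emph{facial angles}: by the equivalence noted in Section~\ref{sec:defterm}, two realizations with the same edge lengths, facial angles, and dihedral angles are congruent, so it suffices to show that the edge lengths together with the dihedral angles already determine every facial angle. Thus, fixing the combinatorial type $P$, I would take two realizations $\phi_1,\phi_2$ with identical edge lengths and identical dihedral angles and prove that all corresponding facial angles agree.

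The local tool is the \emph{vertex link}. Around a vertex $v$, a small metric sphere meets the polyhedron in a spherical polygon $L_v$ (a round $S^2$ regardless of whether the ambient geometry is $\EE^3$, $\HH^3$, or $\SS^3$ -- this is the reason the argument is geometry-independent): its vertices correspond to the edges at $v$, the length of its $i$-th arc is the facial angle at $v$ of the $i$-th face, and its interior angle at the $i$-th vertex is the dihedral angle at the $i$-th edge. Condition~(ii) forces $L_v$ to be a genuine nondegenerate spherical polygon -- no interior angle $0$ or $\pi$, no two arcs coplanar -- and Condition~(iii) forbids antipodal pairs among its vertices. So $\phi_1$ and $\phi_2$ equip each $v$ with two nondegenerate spherical $d_v$-gons having the same interior angles, and one must show they have the same side lengths. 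The two rigidity inputs I would use are: (a) a nondegenerate convex spherical triangle is determined by its three angles ($AAA$ holds in spherical geometry), and more generally a convex spherical $k$-gon is rigid once its angles and enough of its remaining side lengths or vertex positions are prescribed; (b) by Condition~(i), a convex polygon in $\EE^3$, $\HH^3$, or $\SS^3$ with prescribed edge lengths and prescribed $n-3$ of its interior angles is rigid (proved by repeatedly ``cutting off'' a corner with a known angle via $SAS$, convexity selecting the correct solution at each step) -- in particular a triangular face is rigid by its edge lengths alone, and a quadrilateral face is rigid as soon as one facial angle is known.

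Next a \emph{seed}: if every vertex had valence $\ge 4$ and every face had $\ge 4$ sides, then $2E=\sum_v d_v\ge 4V$ and $2E=\sum_f n_f\ge 4F$ would give $V-E+F\le 0$, contradicting $V-E+F=2$; hence every polyhedron has a trivalent vertex or a triangular face, and (a) or (b) respectively forces all facial angles there, so $\phi_1$ and $\phi_2$ already agree on a nonempty subcomplex. I would then \emph{propagate}: call a face \emph{settled} when $\phi_1,\phi_2$ induce congruent polygons on it and a vertex \emph{settled} when they induce congruent links; a settled face or vertex forces agreement of all its incident facial angles, which feeds (a) and (b) at the neighbouring vertices and faces (a face acquires $n_f-3$ agreeing facial angles and becomes settled; a vertex acquires enough agreeing arcs of $L_v$ and becomes settled; in addition a vertex with three non-collinear settled neighbours becomes settled by trilateration, where Condition~(iii) guarantees non-collinearity and a nearby known dihedral angle selects the correct of the two reconstructions). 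Iterating from the seed, the settled subcomplex grows; once it exhausts $P$ all facial angles of $\phi_1$ and $\phi_2$ coincide and the realizations are congruent.

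The main obstacle is precisely this last combinatorial step -- showing the settled subcomplex must be all of $P$. I would argue by contradiction: if a nonempty unsettled subcomplex survived, then by (a) and (b) each unsettled face would have at least four corners at unsettled vertices and each unsettled vertex at least four corners at unsettled faces, and I would show such a configuration cannot live on the $2$-sphere once $3$-connectedness of $P$ is used, via a discrete Euler-characteristic / extremal count. The second delicate point, present throughout, is keeping the $2$-dimensional rigidity statements (a) and (b) free of exceptional degenerate configurations -- exactly the near-flat and near-collinear cases that Conditions~(ii) and~(iii) exclude, and whose elimination is the content of the paper's first conjecture.
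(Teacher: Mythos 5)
Your setup (vertex links, dihedral angles as link angles, facial angles as link side lengths, geometry-independence of the link) matches the paper's, and your seed observation is essentially the paper's Lemma~\ref{lem:v3f3}. But there are two genuine gaps. First, your rigidity inputs in (a) are stated for \emph{convex} spherical polygons, whereas the link $L_v$ of a vertex of a nonconvex polyhedron is in general a nonconvex, possibly self-intersecting spherical polygon: dihedral angles may exceed $\pi$. For such polygons $AAA$ for triangles has \emph{two} solutions (the sides differ by $\theta\mapsto 2\pi-\theta$ or $\theta\mapsto\theta+\pi$, cf.\ Corollary~\ref{trianglerigidity}), and one must use strict convexity of the faces (facial angles, i.e.\ link sides, less than $\pi$) to select the right one; worse, the statement that all angles plus one side determine a nonconvex spherical quadrilateral is the single most technical result of the paper (Theorem~\ref{thm:sphquadRigid}, with explicit counterexamples in Examples~\ref{sph-quad} and~\ref{nonconvexQuad} when its hypotheses fail), and the analogous ``all angles plus $n-3$ sides'' statement for $n\ge 5$ is only a conjecture (Conjecture~\ref{conj:polygon}). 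Your propagation scheme, in which a vertex becomes settled once ``enough'' arcs of $L_v$ agree, would generically require exactly these unproven nonconvex rigidity statements at vertices of degree $\ge 5$ with several unknown sides.

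Second, the termination of the propagation --- that the settled subcomplex exhausts $P$ --- is the step you yourself flag as the main obstacle, and the sketched Euler/extremal count does not close it: the unsettled region is a proper subcomplex of the sphere (a union of disks or annuli), and the inequality ``every unsettled face and every unsettled vertex has degree $\ge 4$ within the unsettled part'' is compatible with $V-E+F\le 1$ on a surface with boundary, so no contradiction follows without a substantially finer boundary analysis. The paper avoids this entirely by a structurally different device: it calls a vertex \emph{strongly-rigid} when at most one (for $\deg v\ge 5$) or at most three (for $\deg v\le 4$) incident faces are non-triangular, proves such a vertex always exists (Lemma~\ref{exist-strong}), shows its triangulated star $\N_v$ is metrically determined, and then \emph{removes} the vertex and retriangulates the hole (vertex reduction, Proposition~\ref{reductionSeq}), inducting on the number of vertices. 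This changes the combinatorics at each step --- the newly-added triangles may create flat edges, which is why the paper needs the ``flat edge principle'' and the case analysis of Proposition~\ref{rigidityvertexreduction}, and why Condition~(iii) is needed to keep the new triangles nondegenerate. Your in-place propagation sidesteps those flat-edge complications, but at the price of the two gaps above; as written, the argument is not a proof.
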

 
 Remark that every polyhedron that satisfies these conditions always has strictly-convex faces; otherwise, there would be collinear triples of vertices.  
 
 A polyhedron is considered \emph{weakly-convex} if its vertex set is the same as the vertex set of another strictly-convex polyhedron. Under the weakly-convex condition, there must be no nonconvex faces or collinear triples of vertices. As a result, we have an immediate corollary:
 
 \begin{cor}
 	If a polyhedron is weakly-convex and has no partially-flat vertices, then it is uniquely determined by its dihedral angles and edge lengths.
 \end{cor}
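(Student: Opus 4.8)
The plan is to obtain the corollary immediately from Theorem~\ref{mainThm}, for which it suffices to verify that a weakly-convex polyhedron automatically meets hypotheses (i) and (iii); hypothesis (ii), the absence of partially-flat vertices, is exactly the extra assumption in the corollary. So let $P$ be weakly-convex and let $Q$ be a strictly-convex polyhedron with $\V(Q)=\V(P)$, which exists by definition. Since $Q$ is strictly convex it is contained in the interior of a half-space (a hemisphere, in the spherical case); hence $\V(P)$ is too, and then so is $P$ itself, its faces being convex polygons spanned by subsets of those vertices while the convex hull of points in an open half-space stays in that half-space. By the equivalence of the three geometries for realizations inside a half-space recalled in Section~\ref{sec:defterm} (Kleinian model, gnomonic projection), it is then enough to carry out the argument in the Euclidean picture.

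First I would verify condition (iii). By the correspondence recalled in Section~\ref{sec:defterm} between convex polyhedra and convex hulls of their vertex sets, each vertex of the strictly-convex polyhedron $Q$ is an extreme point of the convex body it bounds; in particular, if three vertices $u,v,w$ of $Q$ were collinear with $v$ between $u$ and $w$, then $v$ would be a proper convex combination of $u$ and $w$, hence not extreme --- a contradiction. So no triple of vertices of $Q$, equivalently of $P$, is collinear. Next, for condition (i), fix a face $F$ of $P$ and let $\Pi$ be the plane carrying it. The section $Q\cap\Pi$ is a convex polygon, and every vertex of $F$ is a vertex of $Q$ lying in $\Pi$, hence an extreme point of $Q\cap\Pi$, hence a vertex of that convex polygon; so the vertices of $F$ lie in convex position in $\Pi$. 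Since $F$ is an embedded simple polygon, it visits its vertices in their convex cyclic order and therefore coincides with the boundary of the convex polygon they span; thus $F$ is convex --- in fact strictly convex, by (iii). With (i), (ii), (iii) all in place, Theorem~\ref{mainThm} shows that $P$ is uniquely determined by its dihedral angles and edge lengths.

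This corollary is genuinely routine and presents no real obstacle; the only steps worth an explicit sentence are the convexity of the faces --- which uses the elementary fact that a simple polygon whose vertex set is in convex position is a convex polygon, traversed in the convex cyclic order --- and, in the spherical case, the check that the reduction to the Euclidean model is legitimate, since ``convex position'' and ``planar section of a convex body'' must be read through the gnomonic chart in a way that preserves convexity.
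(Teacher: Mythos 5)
Your proof is correct and follows the same route as the paper, which also deduces the corollary immediately from Theorem~\ref{mainThm} by observing that weak convexity forces conditions (i) and (iii) while (ii) is assumed. The only difference is that you spell out the details the paper leaves implicit (extreme points give non-collinearity, and a simple polygon on vertices in convex position is convex), which is a welcome but inessential elaboration.
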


Upon careful consideration of the proof, we  easily derive a statement in which Condition (ii) of Theorem~\ref{mainThm} is replaced as follows:
\begin{thm}\label{thm2}
	 	If a geometric, possibly nonconvex and self-intersecting, polyhedra satisfies that (i) every faces is convex, (ii) there are no flat edges and no set of seven vertices lies on the same plane, and (iii) any triple of vertices is not collinear, then the polyhedron is uniquely determined by its dihedral angles and edge lengths.	
\end{thm}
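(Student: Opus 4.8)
The plan is to run the proof of Theorem~\ref{mainThm} essentially unchanged, after pinpointing the one step where the hypothesis ``no partially-flat vertex'' is actually consumed and checking that the new Condition~(ii) delivers precisely what that step needs. In essence the proof of Theorem~\ref{mainThm} goes as follows. Let $\phi_1,\phi_2$ be two realizations of the same polyhedral graph $P$ with identical edge lengths and identical dihedral angles; by the equivalence of the three congruence criteria recalled in Section~\ref{sec:defterm}, it suffices to show that all corresponding facial angles agree, because then $\phi_1$ and $\phi_2$ are reconstructed identically, face by face along the edges of $P$: a convex face with prescribed edge lengths and facial angles is rigid, and the plane of a face is determined by a neighbouring face together with the dihedral angle across their shared edge. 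The facial angles are recovered by a propagation (bootstrapping) argument: they are known outright at every triangular face (by $SSS$) and at every valence-$3$ vertex, whose link is a spherical triangle carrying the prescribed dihedral angles as its vertex angles (by the rigidity results of Section~\ref{sec:trianglerigidity}); and they spread to the remaining faces and vertices through the $3$-connected combinatorics of $P$ by repeatedly invoking the planar-polygon and spherical-polygon rigidity lemmas of the earlier sections, each vertex being handled through the $2$-dimensional spherical geometry of its link.

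The only place where ``no partially-flat vertex'' is used is in that propagation step, and there it is used only to guarantee that each vertex link $L_v$ is non-degenerate in the precise sense the spherical lemmas require: no vertex angle of $L_v$ is singular, and no two edges of $L_v$ lie on a common great circle, i.e.\ no two faces at $v$ are coplanar. Inspecting the step, the two facts that are genuinely needed are: (1) there are no flat edges, so that no link has a singular vertex angle; and (2) whenever a link is degenerate---that is, $v$ is partially-flat---at least one of the two coplanar faces at $v$ is a triangle, so that the rigidity ($SSS$) of that triangle supplies the datum the degenerate link analysis would otherwise have provided.

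It remains to derive (1) and (2) from the new Condition~(ii). Fact~(1) is immediate. For (2), suppose $v$ is partially-flat and $F,F'$ are two faces at $v$ lying in a common plane $\Pi$. If $F$ and $F'$ were adjacent their shared edge would be flat, contradicting the hypothesis; so $F$ and $F'$ are non-adjacent, and since non-adjacent faces of a polyhedral graph share at most one vertex (a standard consequence of $3$-connectedness), $\V(F)\cap\V(F')=\{v\}$. Then $\Pi$ contains the $|F|+|F'|-1$ distinct vertices of $\V(F)\cup\V(F')$; if $|F|\ge 4$ and $|F'|\ge 4$ this is at least $7$, contradicting the hypothesis that no seven vertices are coplanar. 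Hence at least one of $F,F'$ is a triangle, which is (2). (The same count shows that no three faces at $v$ can be mutually coplanar, so there are no further cases.) With (1) and (2) established, the proof of Theorem~\ref{mainThm} applies line for line, except that at a partially-flat vertex one replaces the appeal to link rigidity by the rigidity of the triangular member of $\{F,F'\}$; Condition~(iii) continues to exclude the collinear degeneracies exactly as before, and the argument remains uniform across Euclidean, hyperbolic and spherical geometry.

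The main obstacle I anticipate is checking that this local substitution really keeps the propagation alive: one must verify that, at a partially-flat vertex $v$, the facial angle delivered by the triangular face is placed correctly in the combinatorial bookkeeping of ``known'' angles so that the bootstrapping can continue past $v$, and that this holds in all three geometries at once. A secondary point to watch is the handful of borderline configurations that Condition~(ii) does not exclude---two coplanar triangles, or a coplanar triangle and quadrilateral, meeting at a single vertex---where one should confirm that the triangular face alone does close the local argument, and that the reconstruction introduces no new seven-point coplanarity or three-point collinearity.
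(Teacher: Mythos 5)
Your central combinatorial step is exactly the one the paper uses: two coplanar faces $F,F'$ meeting at a common vertex $v$ cannot share an edge (that edge would be flat), hence by $3$-connectedness they share only $v$, and if both had at least four vertices one would get $|F|+|F'|-1\ge 7$ coplanar vertices of $P$; so at least one of them is a triangle. This $4+4-1=7$ count is precisely the content of Figure~\ref{fig:7vertexthm}, which is the paper's entire (one-line) proof of Theorem~\ref{thm2}. So the essential idea is right and coincides with the paper's.

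However, your description of the proof of Theorem~\ref{mainThm} that you are patching does not match the paper's, and this shifts where the hypothesis is actually consumed. The paper does not propagate facial angles through the fixed graph; it runs an induction by \emph{vertex reduction} (Proposition~\ref{reductionSeq}), repeatedly cutting off a strongly-rigid vertex and re-triangulating. Flat edges and singular link angles therefore \emph{do} appear in the intermediate polyhedra $P_i$ even when $P$ has none (Figure~\ref{fig:flatEdge}); they are absorbed by the flat edge principle (every flat edge of a $P_i$ has a newly-added, hence triangular, face on one side), not excluded — so your Fact~(1), read as ``no link ever has a singular vertex angle,'' is not what the argument needs. The partially-flat hypothesis is consumed at exactly one point (Proposition~\ref{rigidityvertexreduction}): a $4$-valent vertex of some $P_i$ all of whose edges are flat, with faces alternating (triangle, non-triangle, triangle, non-triangle). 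Your Fact~(2), stated for partially-flat vertices of the original $P$, is not literally the statement invoked there, but it transfers: the two non-triangular faces in that pattern cannot be newly added, hence are coplanar sub-faces of two distinct faces $F,F'$ of $P$ meeting at $v$ whose vertices are all vertices of $P$, and your count then kills the configuration. The proof closes, but only after relocating your lemma from the original polyhedron to the reduced ones; as written, the scaffolding around the correct key count would need to be rewritten to fit the actual induction.
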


In fact, Condition (iii) in Theorem~\ref{mainThm} and Theorem~\ref{thm2} is an auxiliary condition to bypass intricate and delicate situations that arise during  proof.
We expect that it can be removed in subsequent studies, and we currently conjecture as follows:
\begin{conj}
	Every geometric, possibly  nonconvex and self-intersecting, polyhedron is  uniquely determined by its dihedral angles and edge lengths, 
	if it satisfies 	 that (i) every faces is convex, (ii) no vertices are partially-flat.
	\end{conj}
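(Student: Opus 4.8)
\emph{Proof plan.} Let $\phi_{1},\phi_{2}$ be two geometric realizations of a single polyhedral graph $G$ obeying (i)--(iii), with equal edge lengths and equal dihedral angles; we must produce an isometry carrying $\phi_{1}$ to $\phi_{2}$. By the equivalence of the congruence criteria recorded in Section~\ref{sec:defterm} it suffices to prove that $\phi_{1}$ and $\phi_{2}$ agree on every facial angle. Indeed, a polygon in any of the three geometries is determined up to congruence by its edge lengths together with its interior angles, so equal facial angles make corresponding faces congruent; one then develops $G$ face by face across its edges using the common dihedral angles, and because $G$ is $3$-connected and planar this development closes up without monodromy and reconstructs the polyhedron uniquely. (If $\phi_{1}$ and $\phi_{2}$ induce opposite orientations on the ambient $2$-sphere, replace $\phi_{2}$ first by its composite with a reflection, which removes every mirror ambiguity met below.) Thus the problem is transferred to a two-dimensional statement about vertex links.

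For a realization $\phi$ and a vertex $v$ let $L_{v}(\phi)$ be the \emph{link} of $v$: the closed spherical polygon on the unit tangent sphere at $v$ whose consecutive vertices are the unit directions of the edges of $\phi$ at $v$, in the cyclic order dictated by $G$. Its combinatorial type depends only on $G$; its interior angles are exactly the dihedral angles of $\phi$ at the edges incident to $v$; and its side lengths are exactly the facial angles of $\phi$ at $v$. So $L_{v}(\phi_{1})$ and $L_{v}(\phi_{2})$ are spherical polygons of the same combinatorial type with the same interior angles, and the task reduces to showing that they have the same side lengths for every $v$. Several observations organise this. Every polyhedral graph has a triangular face or a vertex of degree $3$ (an Euler-characteristic count excludes the alternative), and each gives a foothold: a triangular face is rigid by $SSS$, while at a degree-$3$ vertex the link is a spherical triangle and so is rigid once its angles are prescribed. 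Condition~(ii) guarantees that no two sides of any $L_{v}$ lie on one great circle, so every link is a non-degenerate spherical polygon; condition~(i) forces each facial angle into $(0,\pi)$ and, crucially, \emph{couples} the links at different vertices, because the facial angles of one strictly convex face at its several corners are interior angles of one and the same polygon with prescribed edge lengths; condition~(iii) is held in reserve to rule out sporadic collinearities among edge directions at a vertex not already forbidden by~(ii).

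The engine is a \emph{local propagation lemma}, to be established by elementary spherical trigonometry: at a vertex $v$, once the dihedral angles at $v$ and the shapes and relative placements of all but one of the faces incident to $v$ are known to agree in $\phi_{1}$ and $\phi_{2}$, then all the edge directions at $v$ agree, hence the remaining facial angle at $v$ --- a spherical distance between two now-identified points of $L_{v}$ --- agrees too, and this advances the resolution of the last face. Granting the lemma, the theorem follows by induction over $G$: seed a set $\mathcal{R}$ of ``resolved'' faces and corners from a triangular face or a degree-$3$ vertex figure, and enlarge $\mathcal{R}$ repeatedly; so long as $\mathcal{R}$ is a proper nonempty subset, the $3$-connectivity and planarity of $G$ provide a vertex all but one of whose incident faces lie in $\mathcal{R}$, and the lemma adjoins the last. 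When $\mathcal{R}$ is everything, all facial angles coincide. The step I expect to be genuinely delicate is the local lemma and its degenerate configurations: when three edge directions at $v$ become coplanar, or the great circles carrying consecutive sides of $L_{v}$ become tangent, the local closure loses transversality and the missing facial angle is no longer forced by purely local data. Excluding these is precisely the purpose of conditions~(ii) and~(iii), and pushing that exclusion through --- along with the extra bookkeeping when $v$ is a nonconvex (saddle) vertex, so that $L_{v}$ is a nonconvex or self-intersecting spherical polygon, and the combinatorial care needed to keep the inductive hypothesis available at every step --- is where most of the work will lie.
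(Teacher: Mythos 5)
The statement you are addressing is a \emph{conjecture} that the paper deliberately leaves open and describes as ``very challenging''; the paper only proves the weaker Theorem~\ref{mainThm}, which adds the hypothesis (iii) that no triple of vertices is collinear. Your plan quietly reinstates that hypothesis: you speak of realizations ``obeying (i)--(iii)'' and say condition~(iii) is ``held in reserve'' to kill collinearities. But removing (iii) is precisely the content of the conjecture. Without it, a convex face may have an interior angle equal to $\pi$, so a side of a vertex link can have length exactly $\pi$; then the $AAA$ determination of a degree-$3$ link fails outright (Proposition~\ref{singular3-gon}, Example~\ref{sph-quad}), and the degenerate configurations you defer to ``where most of the work will lie'' are not side cases but the actual obstruction. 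As written, the proposal at best re-derives a statement close to Theorem~\ref{mainThm} and does not touch the conjecture.

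Beyond that, the combinatorial engine is unjustified and fails as literally stated. Your induction needs, at every stage with $\mathcal{R}$ a proper nonempty set of resolved faces, a vertex all but one of whose incident faces lie in $\mathcal{R}$; $3$-connectivity and planarity do not provide this. For the icosahedron, seeding $\mathcal{R}$ with one triangle leaves every vertex with four unresolved incident faces; for the cube there is no triangular face at all and every vertex has three unresolved quadrilaterals, so the frontier vertex never exists. (These examples happen to be handled by other means --- all-triangular faces, or degree-$3$ links at every vertex --- but that only shows your bookkeeping between ``resolved corners'' and ``resolved faces'' is doing unacknowledged work.) The paper's Lemma~\ref{exist-strong} guarantees only a vertex with $\tau(v)\le 1$, \emph{or} $\deg(v)\le 4$ with up to three unresolved faces; handling the latter is exactly why the paper must prove the difficult rigidity of nonconvex spherical quadrilaterals (Theorem~\ref{thm:sphquadRigid}), which your single-unknown-side propagation lemma cannot replace. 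Note also that your overall architecture (propagating facial angles on the fixed graph and then developing) is genuinely different from the paper's vertex-reduction induction, which removes a strongly-rigid vertex, retriangulates, and recurses on a smaller polyhedron; but the difference does not rescue the plan, since the two load-bearing claims --- the frontier-vertex existence and the degenerate local closures --- are precisely the parts left unproved.
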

	This conjecture is indeed expected to be very challenging to prove, making it a long-term goal. One possible intermediate milestone could be to relax Condition (iii), specifically the statement  that no vertices of the polyhedra overlap in position, rather than eliminating Condition (iii) entirely.
	Another potential intermediate step is to substitute the convex face condition of Condition (i) with  strictly-convex face condition to simplify the problem. These relaxations are expected to reduce complexity involved in analyzing the configurations.

It is worth noting that even though a polyhedron is uniquely determined by its dihedral angles and edge lengths, it does not necessarily satisfy our three conditions. Therefore, there are possibilities for other types of conditions that ensure uniqueness, which is also another topic for further study.

The most complicated and tedious part of our study is the proof of the rigidity of nonconvex spherical quadrilaterals as stated in Theorem~\ref{thm:sphquadRigid}. We did not investigate the generalization to $n$-gons with $n>4$, since it is not necessary for the study of polyhedra. However, it could be a subject of further study in its own right. In this regard, we propose the following conjecture, which is a nonconvex generalization of Theorem~\ref{rigiditySphPolygon}:

\begin{conj}\label{conj:polygon}
	Consider a spherical $n$-gon, which may be nonconvex and self-intersecting. Assume that the $n$-gon satisfies the following conditions: (i) all edge lengths are less than $\pi$, and (ii) any pair of two edges are not collinear.
	Under these conditions, the spherical $n$-gon is uniquely determined by $n-3$  edge lengths and all interior angles.
\end{conj}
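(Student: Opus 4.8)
The natural line of attack is induction on $n$, peeling off one triangular ear at a time. The base cases are $n=3$, where the assertion reduces to the spherical $AAA$ congruence for triangles --- valid as soon as the triangle is nondegenerate, which is forced by conditions (i)--(ii); compare the discussion in Section~\ref{sec:trianglerigidity} --- and $n=4$, which is the content of Theorem~\ref{thm:sphquadRigid}, the nonconvex spherical quadrilateral case settled in this paper. For the inductive step, suppose $n\ge5$ and let $P$ and $Q$ be two spherical $n$-gons satisfying (i)--(ii) that share the same $n$ interior angles and the same lengths on a common set of $n-3$ edges. Only three edges have unprescribed length, so, provided $n\ge7$ --- and for $n\in\{5,6\}$ after choosing the prescribed edges suitably, or by treating those cases directly --- there is a vertex $v$ with neighbours $u,w$ both of whose incident edges $uv$ and $vw$ have prescribed length.

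The point is that cutting the ear at $v$ is a \emph{canonical} operation, completely determined by the prescribed data, and this is what tames the nonconvexity. Writing $\alpha_x$ for the interior angle of $P$ at a vertex $x$, apply $SAS$ to the spherical triangle with sides $|uv|$, $|vw|$ and included angle $\min(\alpha_v,\,2\pi-\alpha_v)$ (which lies strictly between $0$ and $\pi$ by condition (ii)); this determines the geodesic distance $|uw|$ and the base angles $\delta:=\angle vuw$ and $\delta':=\angle vwu$ as explicit functions of the data. Since $\cos|uw|=\cos|uv|\cos|vw|+\sin|uv|\sin|vw|\cos\alpha_v$ lies strictly between $\cos(|uv|+|vw|)$ and $\cos(|uv|-|vw|)$, hence strictly between $-1$ and $1$, we get $0<|uw|<\pi$, so condition (i) is automatically inherited by the $(n-1)$-gon $P'$ obtained from $P$ by deleting $v$ and inserting the edge $uw$. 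Moreover $w$ lies on the interior side of the edge $uv$ precisely when $\alpha_v<\pi$, and this fixes the rotational sense in which $uw$ leaves $u$ and $w$; a short computation then shows that the interior angles of $P'$ at $u$ and at $w$ equal $\alpha_u-\delta$ and $\alpha_w-\delta'$ when $\alpha_v<\pi$, and $\alpha_u+\delta$ and $\alpha_w+\delta'$ when $\alpha_v>\pi$ (each reduced modulo $2\pi$), while every other interior angle and every other edge length of $P'$ coincides with that of $P$. Thus $P'$ has precisely $(n-1)-3$ prescribed edges and carries exactly the same prescribed data as the $(n-1)$-gon $Q'$ obtained from $Q$ by the identical construction. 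If both $P'$ and $Q'$ again satisfy (i)--(ii), the inductive hypothesis gives $P'\cong Q'$, and since the ear over the edge $uw$ is $SAS$-determined and attached on the side dictated by $\alpha_v$ --- the same side for $P$ and for $Q$ --- an isometry realizing $P'\cong Q'$ extends to one realizing $P\cong Q$.

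The step above is complete except for one clause: ``if both $P'$ and $Q'$ again satisfy (i)--(ii)''. Condition (i) is free, as shown, but condition (ii) can genuinely fail for $P'$: the new diagonal $uw$ may happen to be collinear with a non-adjacent edge of $P'$, and the modified interior angle at $u$ or $w$ may become singular --- for instance $\delta=\alpha_u$ forces a zero angle and $\alpha_u+\delta=\pi$ a straight one, so that $P'$ develops a spike or a flat vertex and the edge-count underlying the induction collapses. Resolving this is where I expect the real work to lie. One must either show that, among the ears available at each stage, at least one can always be chosen for which $P'$ remains admissible, or else treat the degenerate outcomes by a separate --- and presumably considerably more intricate --- case analysis, in the spirit of the proof of Theorem~\ref{thm:sphquadRigid}; a preliminary reduction to generic position by a small perturbation, of the kind this paper already uses to dispense with collinear triples, is likely to help. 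It is this combinatorial bookkeeping in the nonconvex, self-intersecting regime, and not any missing trigonometric ingredient, that keeps Conjecture~\ref{conj:polygon} open.
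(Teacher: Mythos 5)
First, a point of context: the statement you are proving is Conjecture~\ref{conj:polygon}, which the paper explicitly leaves open --- the authors prove only the cases $n=3$ (Section~\ref{sec:trianglerigidity}) and $n=4$ (Theorem~\ref{thm:sphquadRigid}) and state at the end of Section~\ref{nonconSph} that a rigorous proof for general $n$ ``is anticipated to be quite challenging.'' So there is no proof in the paper to compare against, and your proposal should be judged as an attempt at an open problem. As such an attempt it is reasonable and honest: the ear-cutting induction with an $SAS$-determined diagonal is the natural generalization of the vertex-reduction idea used throughout the paper, the base cases are correctly identified, and the verification that the new diagonal has length in $(0,\pi)$ is sound. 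You also correctly locate the fatal gap yourself: nothing guarantees that the reduced $(n-1)$-gon still satisfies condition (ii), and without (ii) the inductive hypothesis is simply false. This is not a removable technicality --- Example~\ref{ex:deg6flex} exhibits a hexagon with all interior angles and even $n-2$ edge lengths fixed that is \emph{not} rigid, precisely because two of its edges are collinear. So if the diagonal $uw$ ever becomes collinear with another edge of $P'$, the induction does not merely stall, it lands on a configuration where the conclusion can genuinely fail; one would have to prove that such a degenerate reduction of an admissible $P$ cannot produce a counterexample, or find an ear whose removal avoids degeneracy, and neither is done.

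Two further gaps beyond the one you flag. First, the conjecture (read consistently with Theorem~\ref{rigiditySphPolygon} and its proof) asserts rigidity for an \emph{arbitrary} choice of $n-3$ prescribed edge lengths, so you cannot ``choose the prescribed edges suitably'' for $n\in\{5,6\}$: with $3$ unprescribed edges interleaving the prescribed ones, there may be no vertex whose two incident edges are both prescribed, and your reduction has no ear to cut. These small cases are exactly where Theorem~\ref{thm:sphquadRigid} required a lengthy bespoke case analysis, and they are not dispatched by the pigeonhole remark. Second, the final gluing step assumes the isometry carrying $P'$ to $Q'$ respects the side of $uw$ on which the ear is reattached; since congruence on $\SS^2$ includes orientation-reversing isometries, this needs an argument (the oriented interior angles do constrain it, but you should say how). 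In short: the skeleton is plausible and matches the spirit of the paper's method, but the proposal is a program rather than a proof, and the obstruction you name is exactly the one the authors' own counterexamples show to be substantive.
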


Remark that all interior angles are non-singular due to condition (ii). It's worth noting that a counterexample exists if condition (ii) is replaced with the condition that prohibits singular interior angles. See Example~\ref{ex:deg6flex} and \ref{sph-quad}. For a counterexample that may arise when condition (i) is absent, refer to Example~\ref{sph-quad} and Example~\ref{nonconvexQuad}.

Finally, although we present a counterexample regarding  partially-flat vertices  in Example~\ref{ex:partially-flat-vertex},  it does not break  local rigidity, which is the property of not allowing an one-parameter family of deformations. We have a question about this issue as follows.
\begin{ques}\label{ques}
	Consider a polyhedron with convex faces and no flat edges. Is it flexible or locally rigid with respect to its dihedral angles and edge lengths?	
\end{ques}

 \subsection{Outlines}

 In Section~\ref{sec:convexcase}, we  first demonstrate that strictly-convex polyhedra can be uniquely determined by their dihedral angles and edge lengths. Although this result was previously established by J. J. Stoker in \cite{stoker_geometrical_1968}, we offer an alternative proof which can be generalized to a broader range of nonconvex cases beyond Stoker's original work.  
In essence, the new proof strategy involves reducing vertices of a given polyhedron.  Every polyhedron possesses a `rigid' vertex, defined combinatorially. Since the polyhedron is strictly-convex, the neighborhood of the vertex is uniquely determined by dihedral angles and edge lengths, which enables us to reduce it while preserving the geometric information. We refer to this process as `vertex reduction', a central concept of this paper. All other components of this study may be viewed as efforts to generalize this vertex reduction process to nonconvex situations. Although such generalization introduces certain complexities, the essential idea in the strictly-convex case remains unchanged. 

In Section~\ref{counterexamples}, we present various counterexamples that illustrate why we exclude nonconvex faces, flat edges, and partially-flat vertices. 

In Section~\ref{generalizedNonCon}, we describe the strategy for generalizing our method to nonconvex polyhedra and redefine the vertex reduction procedure  in a rigorous combinatorial way. 
Remark that the inclusion of Condition (iii), which states that there should be no triple of collinear vertices, is to prevent the occurrence of degenerate triangles during vertex reduction. This is crucial not only because degenerate faces are prohibited by the definition of a geometric polyhedron, but also because they would introduce significant technical complexities during vertex reduction.
It is worth noting that singular angles such as 0 or $\pi$ can arise during vertex reduction, but they only occur at newly-added edges, which are always adjacent to a triangle. Therefore, we can handle these singularities.

In Section~\ref{nonconSph}, we study a rigidity of nonconvex spherical polygons. For the reader's convenience, we also provide a proof of spherical trigonometry beyond strictly-convex triangles in Section~\ref{sphTrigonometry}. 
Although trigonometry is a classical topic, some non-trivialities arise in spherical geometry that do not occur in Euclidean or hyperbolic trigonometry. While most textbooks and literature only consider trigonometry for strictly-convex triangles, the trigonometric rules actually hold even without such assumption.
Although this fact is certainly well-known, we were unable to find recent literature on such `generalized trigonometry', apart from a book nearing a century old \cite{klein_ubersicht_1933}.
Based on this study of spherical trigonometry, we establish  rigidity theorems for spherical triangle (3-gon) and quadrilateral (4-gon), which are crucial for proceeding with the induction through the vertex reduction process.

\section{Rigidity for strictly-convex polyhedra}\label{sec:convexcase}
Let us begin with basic definitions. The \emph{degree}, denoted as $\deg(v)$ for a vertex $v$ or $\deg(f)$ for a face $f$, refers to the number of adjacent edges.
\begin{defn}
	The \emph{non-triangular degree} of a vertex $v$, denoted by $\tau (v)$, is the number of adjacent non-triangular faces at the vertex $v$. We say that a vertex $v$ is \emph{rigid} if $\tau(v) \leq 3$. 
\end{defn}
\begin{defn}\label{def:Nv}
	Let $P$ be a geometric polyhedron and $v$ be a vertex of $P$.
	The \emph{local triangulation} of $P$ at $v$, denoted by $P_v$, is the polyhedron obtained from $P$ by adding an edge to each non-triangular face adjacent to $v$, thereby ensuring that there are no non-triangular faces adjacent to $v$, as illustrated in Figure~\ref{fig:Nv}.
	The \emph{triangularized neighborhood} of $v$, denoted by $\N_v$, is the simplicial star of $v$ in $P_v$, comprising all faces  adjacent to $v$ in $P_v$.
	Let $\bdN_v$ denote the boundary of $\N_v$. 
	\end{defn}
\begin{figure}[H]
\begin{center}
	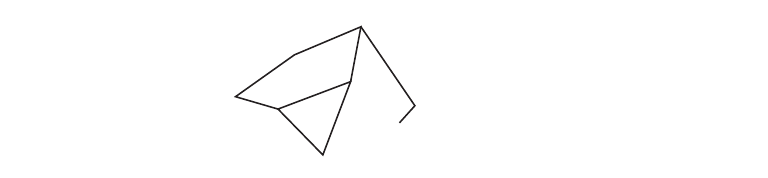
	\caption{Local triangulation $P_v$ and the triangularized neighborhood $\N_v$ of $v$}
	\label{fig:Nv}
\end{center}
\end{figure}	
The boundary $\bdN_v$  is obviously an $n$-gon for $\deg(v)=n$ and it may not  be a plane polygon. 
\begin{defn}
	A \emph{vertex figure} at $v$, denoted by $S_v$, is defined as the spherical polygon on the unit sphere  obtained by intersecting the polyhedron $P$ with a sufficiently small sphere centered at the vertex $v$ and then scaling up the spherical polygon and the sphere to have radius one. 
\end{defn}	
	\begin{figure}[h!]
		\begin{center}
			\input{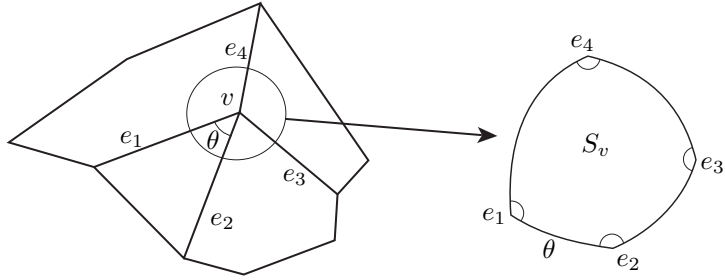}
			\caption{The local configuration near $v$ and the vertex figure $S_v$, where the facial angle $\theta$ corresponds to the edge lengths of $S_v$}
			\label{fig:vertexfigure}
		\end{center}
	\end{figure}	
	Note that the dihedral angles and facial angles adjacent to $v$ correspond to interior angles and edge lengths  of $S_v$, respectively. 
The following proposition demonstrates why we use the term `rigid' for such a vertex.
\begin{prop}
	\label{rigidvertexthm}
Let $P$ and $Q$ be two strictly-convex polyhedra of the same combinatorial type.
If a vertex $v$ of the polyhedral graph is rigid, and the lengths of all corresponding adjacent edges, along with their dihedral angles in $P$ and $Q$, are the same, then the corresponding neighborhoods $\N_v$ in $P$ and $Q$ are isometric to each other.
\end{prop}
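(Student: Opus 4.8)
The plan is to reconstruct the neighborhood $\N_v$ of a rigid vertex $v$ from the given edge lengths and dihedral angles, exploiting the correspondence between the combinatorial data at $v$ and the vertex figure $S_v$. First I would observe that since $v$ is rigid, the non-triangular degree $\tau(v)\le 3$, so in the local triangulation $P_v$ there are at most three "diagonal" edges added at $v$ that are not part of the original polyhedron. The dihedral angles and edge lengths of $P$ determine, around $v$, all facial angles at $v$ that come from original faces, and all dihedral angles at original edges. The only potentially unknown data in $\N_v$ are the lengths of the added diagonals and the way the angles of a non-triangular face get split by a diagonal. The key point is that each non-triangular face is convex (indeed strictly convex, by the hypotheses of the ambient theorem, though here we only need what is stated: $P$ and $Q$ are strictly convex), hence by the Cauchy–Legendre theorem (Theorem~\ref{thm:cauchy}) it is rigid given its edge lengths and facial angles — so once we know those, every diagonal length and every sub-angle in a triangulated face is determined identically in $P$ and in $Q$.

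Next I would pass to the vertex figure. By the discussion preceding the proposition, $S_v$ is a spherical polygon whose edge lengths are the facial angles at $v$ and whose interior angles are the dihedral angles at the edges incident to $v$. After local triangulation, the triangularized vertex figure $S_v^{\text{tri}}$ (the vertex figure of $P_v$ at $v$) is a spherical polygon of the same combinatorial type in $P$ and in $Q$: its edges are subdivided according to how the diagonals split the original facial angles, and these subdivisions agree in $P$ and $Q$ by the previous paragraph. So $S_v^{\text{tri}}(P)$ and $S_v^{\text{tri}}(Q)$ are spherical polygons with the same edge lengths. We then need that they also have the same interior angles, i.e. the dihedral angles at the newly added diagonal edges agree. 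This is where rigidity of the vertex ($\tau(v)\le 3$) is used decisively: with at most three non-triangular faces, the triangularized neighborhood $\N_v$ is a cone over a spherical polygon $S_v^{\text{tri}}$ that, in each of $P$ and $Q$, has all edge lengths prescribed and at most three of its vertices carrying an "unknown" angle. One reduces to rigidity of spherical triangles ($SSS$ determines the remaining angles, for strictly-convex spherical triangles) applied to the at most three triangles sitting at the diagonals, or equivalently to the fact that a spherical polygon with all edges fixed and at most three free angles has its shape pinned down by those edge lengths together with the other fixed angles; this determines the diagonal dihedral angles, hence all interior angles of $S_v^{\text{tri}}$.

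Once $S_v^{\text{tri}}(P)$ and $S_v^{\text{tri}}(Q)$ agree as spherical polygons (all edge lengths and all interior angles equal), they are congruent on the unit sphere, so there is an isometry of the ambient 3-space fixing $v$ and carrying the spherical polygon $S_v^{\text{tri}}(P)$ onto $S_v^{\text{tri}}(Q)$. This isometry sends each edge-ray from $v$ in $P$ to the corresponding edge-ray in $Q$; combined with the equality of the incident edge lengths, it sends each neighboring vertex of $v$ in $\N_v(P)$ to the corresponding vertex in $\N_v(Q)$. Since $\N_v$ is the simplicial star of $v$ (a union of triangles spanned by $v$ and consecutive neighbors), having all vertices matched means the two stars coincide, i.e. $\N_v(P)$ and $\N_v(Q)$ are isometric, as claimed.

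The main obstacle I expect is the bookkeeping around non-triangular faces and the case analysis $\tau(v)\in\{0,1,2,3\}$: one must verify that for every distribution of up to three non-triangular faces around $v$, the triangularization produces a spherical polygon whose shape really is forced by the prescribed data. Degenerate configurations — e.g. a facial angle of $\pi$ after splitting, or a diagonal of extremal length — are exactly what Condition (iii) of the main theorem is designed to rule out, but in the strictly-convex setting of this proposition those do not arise, so the argument stays clean; the delicate part is simply making the reduction to spherical-triangle rigidity rigorous in each case rather than any single hard estimate.
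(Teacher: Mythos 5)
Your overall strategy --- pass to the vertex figure at $v$ and invoke a rigidity statement for spherical polygons --- is the same as the paper's, but the bookkeeping of what is known versus unknown goes wrong in a way that makes the argument circular. The facial angle at $v$ of a non-triangular face adjacent to $v$ is \emph{not} determined by the given data at the outset: a convex $k$-gon with $k\ge 4$ is not determined by its edge lengths alone, and its facial angles are not part of the hypothesis (only edge lengths and dihedral angles are given). Your appeal to Cauchy--Legendre (``rigid given its edge lengths \emph{and facial angles}'') therefore assumes exactly what must be proved; those facial angles at $v$ are precisely the unknown edge lengths of the vertex figure. Consequently your claim that the triangularized vertex figure ``has all edge lengths prescribed'' with only the angles at the diagonals unknown is false: each non-triangular face contributes an unknown edge length to $S_v$, equivalently an unknown diagonal length in the triangle it cuts off at $v$. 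Note also that the local triangulation of Definition~\ref{def:Nv} adds the edge joining the two neighbors of $v$ on that face, so the facial angle at $v$ is not subdivided at all and the vertex figure of $P_v$ at $v$ coincides with $S_v$; the edge-subdivision you describe does not occur. Finally, the polygon-rigidity statement you invoke (``all edges fixed, at most three free angles'') has the roles of edges and angles swapped relative to what the data actually supplies.

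The repair is the paper's count, carried out on the \emph{untriangulated} vertex figure: in $S_v$ all $n$ interior angles are known (they are the given dihedral angles at the $n$ edges through $v$), and at least $n-3$ edge lengths are known, namely the facial angles at $v$ of the adjacent triangular faces, which are determined by $SSS$ from the given edge lengths of $P$; rigidity of $v$ guarantees at most three non-triangular faces, hence at most three unknown edge lengths of $S_v$. Theorem~\ref{rigiditySphPolygon} (all interior angles plus $n-3$ edge lengths determine a strictly-convex spherical $n$-gon) then determines $S_v$, and together with the known lengths of the edges through $v$ this pins down all vertices of $\N_v$, hence $\N_v$ up to isometry. Your concluding step, passing from congruent vertex figures to isometric stars, is fine once this is in place.
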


\begin{proof}
Consider the vertex figure $S_v$ at $v$. If $\deg(v)=n$, then $S_v$ forms a strictly-convex spherical $n$-gon. Since $v$ is rigid, it is adjacent to at least $n-3$ triangular faces. As the lengths of all edges of these triangular faces are known, all their interior angles are also uniquely determined.
	Recall that these facial angles  correspond to the edge lengths  of $S_v$. 
	 Thus, we  determine  at least $n-3$ edge lengths in $S_v$. 
	Since	$S_v$ has at most three unknown edge lengths, the spherical $S_v$ is unique up to congruence  by Theorem~\ref{rigiditySphPolygon}. All edge lengths of $P$ are already known. It is obvious that the triangularized neighborhood  $\N_v$ in $P$ and $Q$ are isometric to each other.  
\end{proof}

In fact, this proposition is an obvious   corollary of Theorem~\ref{rigiditySphPolygon}, which we will prove below,  regarding the rigidity of strictly-convex spherical polygons.
Before proceeding with the proof of Theorem~\ref{rigiditySphPolygon}, some preliminary work is required.
 Recall the definition of  convex spherical polygons presented in Section~\ref{sec:defterm}. According to condition (C3) of this definition, a convex spherical polygon is contained within a hemisphere automatically.  We can, however, prove that this containment is actually a consequence of conditions (C1) and (C2) even without invoking condition (C3).
 Although a similar matter has been previously addressed in \cite[Section 2.7]{stoker_geometrical_1968}, we provide our own proof  for the convenience of the reader.  It also lays the foundation for important steps in this paper.
 
For convenience, let us introduce  notation that will be used throughout this paper. The polygon with vertices $a$, $b$, $c$ and $d$ is denoted by $\polygon{a\. b\. c \.d}$, and the polygonal path passing through these vertices in order is denoted by $\edgepath{a\. b\. c\. d}$. Also, $\vect{a\.b}$ represents a geodesic extended in the direction from $a$ to $b$.
 
\begin{thm}\label{convexsphpoly}
	Let $P$ be a spherical polygon on a 2-sphere. If $P$ does not have any self-intersection and all its interior angles are less than $\pi$, then $P$ is contained in a hemisphere~$H$. In particular, if an edge $e$ is lying on the great circle $\partial H$, then $P \ssetminus e$ is contained within the interior of $H$. Therefore, $P$ is either a bigon or is contained within the interior of a hemisphere.
\end{thm}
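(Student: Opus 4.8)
The plan is to argue by induction on the number $n$ of vertices of $P$, carrying a single invariant: the closed region $\overline D$ bounded by the Jordan curve $P$ (the side whose interior angles are the prescribed ones) is geodesically convex. All three assertions of the theorem will then follow from convexity of $\overline D$. For the base cases: a self-intersection-free spherical bigon with both interior angles $<\pi$ must have antipodal vertices — otherwise its two edges are the complementary arcs of the unique great circle through its vertices and both angles equal $\pi$ — so it is a lune, which is the bigon alternative and lies in a closed hemisphere; and a non-degenerate spherical triangle with all three angles $<\pi$ is the short-edge triangle on its vertices and is geodesically convex, which I would get from the remark that the three vertices, not lying on a common great circle, admit no nontrivial nonnegative linear relation among their position vectors in $\RR^3$, hence lie in a common open hemisphere $H_0$, and $H_0$, being convex, contains the three short edges.

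For the inductive step with $n\ge4$, I would first exhibit a diagonal of $P$ — a short geodesic arc joining two non-adjacent vertices and lying inside $\overline D$ — by the standard polygon-triangulation argument transported to the sphere, and cut $P$ along it into two simple polygons $P_1,P_2$ with fewer vertices. Each $P_i$ still has all interior angles $<\pi$: they are unchanged except at the two endpoints of the diagonal, where they strictly decrease since the diagonal lies strictly inside the (reflex-free) interior angle there; moreover the strict bound $<\pi$ at every vertex of $P$ forbids three consecutive vertices from being collinear, so no degenerate piece is produced. By induction $\overline{D_1}$ and $\overline{D_2}$ are geodesically convex, and $\overline D=\overline{D_1}\cup\overline{D_2}$ is their union along the diagonal, the two angles of $\overline D$ along that edge being the original interior angles of $P$ at its endpoints, hence $<\pi$. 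I would then conclude that $\overline D$ itself is convex via the local-to-global principle for convex sets: a great circle that supports $\overline D$ locally at a boundary point supports it globally.

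Once $\overline D$ is known to be convex, the remaining claims are short. Picking any vertex $v$, its interior angle being $<\pi$ yields a great circle $\gamma_v$ through $v$ that supports $\overline D$ locally, hence — by convexity — globally, so $P\subseteq\overline D\subseteq H$, where $H$ is the closed hemisphere bounded by $\gamma_v$. Next, $\overline D\cap\partial H$ is a convex subset of the great circle $\partial H$, so it is a point, a proper arc, or all of $\partial H$; the last is impossible, since it would force $\overline D$ to be a closed hemisphere and hence $P=\partial H$ a polygon with all angles $\pi$. If some edge $e$ lies on $\partial H$, then $\overline D\cap\partial H$ is exactly the closed arc $e$: extending it would, by transversality of distinct great circles, drag in the whole next edge, and iterating around $P$ contradicts the absence of flat vertices. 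Hence $P\cap\partial H=e$, so $P\ssetminus e$ is disjoint from $\partial H$, that is, $P\ssetminus e\subseteq\operatorname{int}H$, which is the ``in particular'' clause; and when $P$ is not a bigon, $\overline D\cap\partial H$ is a proper arc or a point, so a sufficiently small rigid motion of $H$ engulfs it without letting anything escape, giving $P\subseteq\operatorname{int}H$.

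\emph{The main obstacle} is the gluing step in the induction: verifying that the union of two geodesically convex polygonal regions along a shared edge with reflex-free angles at its two endpoints is again convex — equivalently, that $\overline D$ cannot wrap around the sphere. This is precisely where the hypotheses are essential (simpleness prevents $P$ from overlapping itself; the strict bound $<\pi$ excludes collinear consecutive triples and flat vertices), and where care is needed about near-antipodal pairs of points, for which the minimizing geodesic is non-unique, and about edges of length close to $\pi$. Whenever a piece has already been placed inside a hemisphere, the bookkeeping lightens considerably, since one may gnomonically project it to the Euclidean plane, where the relevant facts are elementary; in the write-up I would organise the induction to exploit this reduction as much as possible.
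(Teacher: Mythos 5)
Your strategy --- induction on the number of vertices via a spherical triangulation, carrying the invariant that the enclosed region $\overline D$ is geodesically convex --- is genuinely different from the paper's argument, which instead places a single short edge $e$ on the boundary of a hemisphere $H$ and rules out any escape of $P$ from $H$ by extracting two disjoint bigons, which cannot coexist on $S^2$. However, the step you yourself flag as ``the main obstacle'' is not a detail to be tidied up in the write-up: it is essentially the entire content of the theorem, and your proposal does not close it. The ``local-to-global principle for convex sets'' is a Euclidean fact (Tietze--Nakajima); its spherical analogue is not an off-the-shelf tool, precisely because the only thing that can go wrong on the sphere is the region wrapping past a hemisphere, and the standard routes to a spherical local-to-global statement first establish containment in a hemisphere --- which is the conclusion you are trying to prove. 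Concretely, after gluing $\overline{D_1}$ and $\overline{D_2}$ along the diagonal you know that each piece separately lies in some closed hemisphere and that the two angles along the glued edge are $<\pi$, but nothing in your argument prevents the union from being a long thin region reaching past antipodal points: a small interior angle at a vertex only confines a convex spherical set to a lune, and lunes reach all the way to the antipode, so each $\overline{D_i}$ may individually have diameter arbitrarily close to $\pi$. Excluding such a union from wrapping around is exactly the theorem, so the induction has not reduced the difficulty. (Gauss--Bonnet does give $\mathrm{Area}(\overline D)=\sum_i\theta_i-(n-2)\pi<2\pi$ when all $\theta_i<\pi$, but an area bound alone does not yield containment in a hemisphere.)

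A second, related gap is the existence of the diagonal. The Euclidean two-ears/diagonal argument shoots a ray from a convex vertex off to infinity; on the sphere geodesics close up, and before you know that $\overline D$ lies in a hemisphere you cannot guarantee that two non-adjacent vertices are joined by a minimizing arc of length $<\pi$ contained in $\overline D$, nor that cutting along such an arc leaves two simple pieces satisfying the hypotheses. Both this step and the gluing step become easy \emph{after} the theorem is known, which is where the circularity lies. Your base cases and the derivation of the three stated conclusions from convexity of $\overline D$ (local support at a vertex, the intersection $\overline D\cap\partial H$ being a point or arc, and the final small rotation of $H$) are sound and closely parallel the end of the paper's proof; the missing piece is a genuine, non-circular proof of the gluing lemma, for which I would point you to the paper's bigon argument as the kind of global input that is needed.
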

\begin{proof}
	Every bigon with an interior angle less than $\pi$ is obviously contained in a hemisphere. 
	For an $n$-gon $P$ with $n \geq 3$, let us consider the lengths of two consecutive  edges. 
	At least one of these edges must have a length less than $\pi$; otherwise, there would be an intersection for a pair of antipodes, violating the non-self-intersecting condition. 
	Consequently, there must exist an edge $e$ with  length less than $\pi$, where  the endpoints $\partial e$ are denoted by $x$ and $y$.
	Let us take a hemisphere $H$ such that the boundary circle contains the edge $e$ and the two adjacent edges at  $x$ and $y$  go into the hemisphere $H$, as illustrated in Figure~\ref{fig:convexSphericalPolygon}.
\begin{figure}[h!]
	\begin{center}
		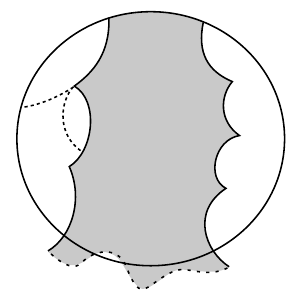
		\caption{A strictly-convex polygon that is not entirely contained within the hemisphere}
		\label{fig:convexSphericalPolygon}
	\end{center}
\end{figure}	

Let us suppose, contrary to the conclusion, that $P \ssetminus e$   does not contained in the interior of $H$. Some part of $P$ must intersect $\partial H \ssetminus e$. Let us examine the left-most intersections with $\partial H \ssetminus e$. 
For example, consider the shortest edge path $\edgepath{x\, x_1\. x_2\. x_3}$ on $\partial P$ which meets $\partial H \ssetminus e$ in Figure~\ref{fig:convexSphericalPolygon}. 
The edge path $\edgepath{x\, x_1\. x_2\. x_3}$ and $\edgepath{x_3\. x}\subset \partial H$ forms a simple closed curve~$C$. 
Similarly, for the right-most intersection, we can find another disjoint simple closed curve $D= \edgepath{y\,y_1\. y_2\. y_3 y_4}\hspace{0.1em}\cup\hspace{0.1em} \edgepath {y_4\. y}$. 
Now, we claim that these $C$ and $D$ must contain a bigon respectively. 
To see this, for example,  let us consider $\vect{x\, x_1}$ which  is extended $\edgepath{x\, x_1}$ from $x_1$ geodesically. Then  $\vect{x\, x_1}$ must intersect $C$ at either $a_1$, $a_2$, or $a_3$; otherwise, $C$ would enclose a great circle inside $H$, which is impossible.  If the intersection point is $a_1$ or $a_2$ then the bigon appear immediately. If it is $a_3$, let us consider  a smaller simple closed curve $C'=\edgepath{x_1\.x_2\.a_3\. x_1}$ with fewer number of edges. We can find a bigon in $C'$ by induction on the number of edges and it proves the claim. Consequently, the hemisphere $H$ must contain at least two disjoint bigons from $C$ and $D$ respectively and it is impossible in 2-dimensional spherical geometry. 
Finally, since the length of  $e$ is less than $\pi$, there is a pair of antipodes in $\partial H \ssetminus e$. By rotating the hemisphere $H$ slightly around the axis defined by the pair of antipodes, we obtain that the polygon $P$ lies entirely within the interior of this rotated hemisphere. This concludes the proof.
\end{proof}

By the fact that the lengths of all geodesic segments contained in the interior of a hemisphere is always less than $\pi$, we have a corollary of Theorem~\ref{convexsphpoly} as follows.
\begin{cor}\label{lengthSphPolygon}
	Every edge length of a  spherical strictly-convex polygon is less than $\pi$.
\end{cor}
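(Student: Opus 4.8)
The plan is to read off Corollary~\ref{lengthSphPolygon} directly from Theorem~\ref{convexsphpoly}, together with one elementary fact about geodesic segments in a hemisphere. First I would check that a strictly-convex spherical polygon $P$ meets the hypotheses of Theorem~\ref{convexsphpoly}: by the definition in Section~\ref{sec:defterm}, strictly-convex entails condition (C1), so $P$ has no self-intersection, and it entails that all interior angles are strictly less than $\pi$, so in particular condition (C2) holds. Moreover $P$ is not a bigon, since bigons violate condition (C3) and hence are never convex, as recalled at the end of Section~\ref{sec:defterm}. Therefore Theorem~\ref{convexsphpoly} applies and its conclusion — that $P$ is a bigon or else lies in the interior of a hemisphere — forces $P$ to be contained in the interior of some hemisphere $H$; in particular every edge of $P$ lies in $\mathrm{int}(H)$.

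Next I would make precise the fact invoked in the sentence preceding the statement: any geodesic segment contained in the interior of a hemisphere has length strictly less than $\pi$. The quick argument is that a geodesic segment of length at least $\pi$ must contain a pair of antipodal points — its two endpoints when the length is exactly $\pi$, or two interior points when it is longer, since a complete great circle has length $2\pi$ — whereas the interior of a hemisphere $H$ contains no antipodal pair, because the antipode of any point of $\mathrm{int}(H)$ lies in the interior of the complementary hemisphere.

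Combining the two observations: each edge of $P$ is a geodesic segment lying in the open hemisphere $\mathrm{int}(H)$, hence has length less than $\pi$. This is exactly the assertion of the corollary, so the proof is complete.

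I do not expect a real obstacle here; the statement is an immediate corollary of Theorem~\ref{convexsphpoly} once the length fact for open hemispheres is isolated. The only points needing a sentence of care are the reduction that excludes the bigon alternative and the verification that the adjective \emph{strictly-convex} supplies precisely the hypotheses (no self-intersection and interior angles $\le \pi$) required by Theorem~\ref{convexsphpoly}.
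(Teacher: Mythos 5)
Your proposal is correct and follows exactly the route the paper takes: Theorem~\ref{convexsphpoly} places the strictly-convex polygon in the interior of a hemisphere (the bigon alternative being excluded by condition (C3)), and an open hemisphere contains no antipodal pair, so every geodesic segment in it has length less than $\pi$. Your write-up merely makes explicit the antipodal-point justification that the paper leaves as a stated fact.
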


Remark that these results also hold for three-dimensional polyhedra. If there are no self-intersections and all facial and dihedral angles are less than $\pi$, the polyhedron $P$ is contained within a three-dimensional hemisphere $H$. Suppose otherwise, one could consider a two-dimensional section intersecting with $P$, which forms a strictly-convex polygon that cannot be contained in a hemisphere. This leads to a contradiction to Theorem \ref{convexsphpoly}

Now we consider a rigidity theorem about  spherical strictly-convex polygons as follows.

\begin{thm}\label{rigiditySphPolygon}
	Every spherical strictly-convex $n$-gon $P$  is uniquely determined by all of its interior angles and $n-3$ of its edge lengths.
\end{thm}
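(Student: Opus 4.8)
The count of degrees of freedom is suggestive: an $n$-gon on the $2$-sphere has $2n$ vertex coordinates, hence $2n-3$ modulo the isometry group, exactly matching the $n+(n-3)$ prescribed quantities, so one expects rigidity. The plan is to prove it by strong induction on $n$, in the spirit of the vertex-reduction philosophy of this paper: cut off a triangular ``ear'' and reduce to an $(n-1)$-gon. For the base case $n=3$ (where $n-3=0$) the statement is spherical $AAA$ congruence for strictly-convex triangles; I would deduce this from spherical $SSS$ by passing to the polar-dual triangle, whose side lengths are $\pi$ minus the interior angles of the original, and $SSS$ itself is immediate from the law of cosines $\cos a=\cos b\cos c+\sin b\sin c\cos A$, which recovers $A\in(0,\pi)$ from $a,b,c\in(0,\pi)$. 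The quadrilateral case $n=4$ (four angles, one edge) would be isolated and settled first as well, since it feeds the induction and is where the real work lies (see below).

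For the inductive step, $n\ge 4$: among the $n$ edges at least $n-3\ge 1$ have prescribed length, so we may choose a vertex $v_i$ at least one of whose incident edges, say $v_{i-1}v_i$, is prescribed, and look at the ear $\triangle v_{i-1}v_iv_{i+1}$. By Corollary~\ref{lengthSphPolygon} its sides have length $<\pi$, and the included angle $\alpha_i$ is $<\pi$ by strict convexity, so the ear is a genuine strictly-convex spherical triangle. \emph{Case (a):} both $v_{i-1}v_i$ and $v_iv_{i+1}$ are prescribed --- this necessarily holds when $n\ge 7$ (otherwise the $n-3$ prescribed edges would form an independent set in the edge-cycle, impossible for $n\ge 7$), and may hold for $n=5,6$. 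Then $SAS$ determines the ear, hence its chord $v_{i-1}v_{i+1}$ and its base angles $\beta,\gamma$. Deleting $v_i$ and inserting the edge $v_{i-1}v_{i+1}$ produces an $(n-1)$-gon $P'$; because $P$ lies in a hemisphere by Theorem~\ref{convexsphpoly} and no three vertices of a strictly-convex polygon are collinear, the chord runs strictly between the two edges at each of $v_{i-1}$ and $v_{i+1}$, so the new angles $\alpha_{i-1}-\beta$ and $\alpha_{i+1}-\gamma$ lie in $(0,\pi)$ and (via the gnomonic projection) $P'$ is again strictly convex. All its angles and $(n-1)-3$ of its edge lengths are known, so $P'$ is rigid by the inductive hypothesis, and re-attaching the determined ear shows $P$ is rigid.

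\emph{Case (b):} exactly one of $v_{i-1}v_i$, $v_iv_{i+1}$ is prescribed --- this is the fallback and forces $n\in\{4,5,6\}$. The ear is no longer pinned down, so I would introduce the parameter $p=\angle v_iv_{i-1}v_{i+1}\in(0,\alpha_{i-1})$; by $ASA$ (angle $p$ at $v_{i-1}$, side $v_{i-1}v_i$, angle $\alpha_i$ at $v_i$) the ear --- hence the chord length $d(p)$ and the base angle $\gamma(p)$ --- is an explicit function of $p$. The reduced $(n-1)$-gon $P'(p)$ then carries all its angles together with \emph{one more} prescribed edge length than its dimension demands. Applying the inductive hypothesis to any competitor, its $P'$ is determined by the angles and $n-4$ of these lengths, so the last prescribed length is forced to equal a function $L(p)$ of $p$ alone; it then suffices to show $L$ is strictly monotone on the (connected) range of admissible $p$, for then the equation $L(p)=c$, with $c$ the prescribed value, has a unique solution and $P$ is rigid. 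For $n=4$ this is exactly the quadrilateral statement, and for $n=5,6$ one or two further ear-cuts of type (a) reduce it to the same quadrilateral statement plus the monotonicity of one more such length function.

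The main obstacle, then, is the monotonicity of these length functions $L(p)$ in Case (b), or equivalently the rigidity of the strictly-convex spherical quadrilateral under the data ``four angles and one edge''. I expect this to be the genuinely delicate point: it relies on the generalized spherical trigonometry developed later in the paper (the sine and cosine rules valid beyond strictly-convex triangles) together with a careful sign-of-derivative analysis showing that the dependence of the free diagonal on the cutting angle never reverses; checking that the admissible parameter interval is connected, so that two-point injectivity of $L$ suffices, is part of the same difficulty. Everything else --- the pigeonhole choice of ear, the $SAS$/$ASA$ reductions, and the preservation of strict convexity under ear-cutting --- is routine by comparison, which is why the quadrilateral case is singled out and treated with particular care.
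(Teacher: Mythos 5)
Your inductive step has a genuine gap exactly where you flag it: Case (b), which is not an edge case but the heart of the matter. For $n=4$ there is always only one prescribed edge, so every quadrilateral falls into Case (b); for $n=5,6$ the prescribed edges can form a matching in the edge-cycle, so Case (b) can be forced there too. You reduce all of these to the strict monotonicity of a length function $L(p)$ obtained by sweeping the cutting angle $p$, but you never establish that monotonicity (nor the connectedness of the admissible interval), and without it the induction has no base beyond $n=3$. Asserting that the quadrilateral case ``is singled out and treated with particular care'' does not discharge it; as written, the proof of the theorem for every $n\ge 4$ rests on an unproven claim.

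The missing idea is that one should not cut an ear at a \emph{vertex} (which needs two known incident edges for $SAS$, or else leaves a free parameter), but rather remove a \emph{known edge} by extending its two neighboring edges outward until they meet at a new vertex $u$. The outer triangle $\polygon{u\,w_2\,w_1}$ is then pinned down by $ASA$: its side $\edgepath{w_1\,w_2}$ is the known edge and its two base angles are $\pi$ minus the two given interior angles of $P$. This determines the apex angle at $u$ and the two increments $\edgepath{u\,w_1}$, $\edgepath{u\,w_2}$, so the enlarged $(n-1)$-gon $P'=P\cup\polygon{u\,w_2\,w_1}$ has all angles known and no more unknown edge lengths than $P$ had. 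One then only needs to verify that $P'$ is again strictly convex (via Theorem~\ref{convexsphpoly} and the sine rule for the angle at $u$), and the induction closes with $AAA$ for triangles --- no parameter, no monotonicity, and a single known edge suffices at every step. Your Case (a) machinery ($SAS$ ear-cutting when two adjacent edges are known) is correct but is only guaranteed to apply for $n\ge 7$, which is precisely why it cannot carry the induction down to its base.
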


\begin{proof}	
	If $P$ is a triangle, it is uniquely determined by its three angles\footnote{
		If an interior angle or an edge length of a spherical triangle is equal to $\pi$, then the triangle is not uniquely determined by its angles alone. We exclude this situation due to the strictly-convex condition.} 
	by the spherical cosine rule, without any edge lengths.
	We use induction on $n$. For every strictly-convex $n$-gon with $n\geq 4$, there must be an edge $e$ whose length is determined, because at least $n-3$ edge lengths are given. Refer to the $4$-gon $P=\polygon{w_1\.w_2\.w_3\.w_4}$ in Figure~\ref{fig:polygonreduction}, where the length of $e=\edgepath{w_1\.w_2}$ is given. Note that two interior angles adjacent to the edge $\edgepath{w_1\.w_2}$ are given. As we extend two adjacent edges $\edgepath{w_4\.w_1}$ and $\edgepath{w_3\.w_2}$ of $\edgepath{w_1\.w_2}$ in the outward direction of the convex polygon, the two lines meet at a new vertex $u$ and the edge $\edgepath{w_1\.w_2}$ is removed, as illustrated in Figure~\ref{fig:polygonreduction}. 
	\begin{figure}[h]
		\begin{center}
				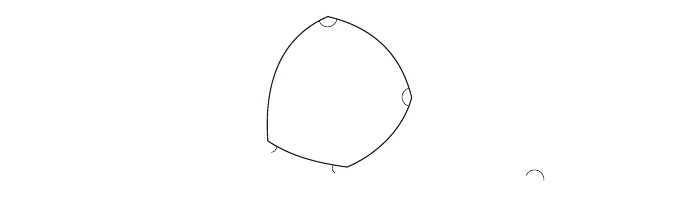
				\caption{A spherical $4$-gon $P$ given with  all interior angles and one edge length and the reduced $3$-gon $P'$}
				\label{fig:polygonreduction}
			\end{center}
	\end{figure}

	 We claim that the reduced $(n-1)$-gon $P'=\polygon{u\,w_3\.w_4} = P \cup\, \polygon{u\, w_2\.w_1}\,$ is strictly-convex. The edge $\edgepath{w_1w_2}$ has  length less than $\pi$ by Corollary~\ref{lengthSphPolygon}.	 
	  The interior angle at vertex $u$ is less than $\pi$ by the spherical sine rule\footnote{
	  	In most literature, the Sine rule is established under the strictly-convex condition. Therefore, it cannot be employed to prove that the interior angle $u$ is less than $\pi$.  We will prove the  trigonometry without angle assumption in Section~\ref{sphTrigonometry}.}, and hence all interior angles of $P'$ are less than $\pi$. The polygon~$P$ is contained in  a hemisphere $H$ whose boundary contains $\edgepath{w_1 w_2}$ by Theorem~\ref{convexsphpoly} and  the 3-gon $\polygon{u\,w_2\.w_1}$ is contained in the opposite hemisphere. Therefore, $P'$ does not have any self-intersection and it is obviously not a bigon. We  conclude that $P'$ is strictly-convex, satisfying the definition in Section \ref{sec:defterm}. 
	 Now, all angles of $P'$ are given and the number of unknown edge lengths of $P'$ does not increase from $P$. By proceeding with the induction step, we  finally arrived at a triangle and this completes the proof.
\end{proof}

Now, let us recall a well-known lemma that has appeared many times in the literature (for example, see p.237 of \cite{grunbaum_convex_2003}). Let $V$,$E$, and $F$ denote  the number of vertices, edges and faces, respectively.  Let $V_i$ and $F_i$ denote the number of vertices and faces of degree $i$, respectively. 
\begin{lem}
	\label{lem:v3f3} 
	For every polyhedral graph $P$, we have
	$$ V_3 + F_3 = \sum_{n\geq5} (n-4)(V_n + F_n) + 8.$$
\end{lem}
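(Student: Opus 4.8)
The identity is a purely combinatorial consequence of Euler's formula $V-E+F=2$ applied twice — once "from the vertex side" and once "from the face side" — and then combined. The plan is to write down the two standard double-counting relations for a polyhedral graph and feed them into Euler's formula.

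\textbf{Step 1: the edge-counting relations.} Since every edge has exactly two endpoints, $\sum_{n\geq 3} n\,V_n = 2E$; and since every edge lies on exactly two faces, $\sum_{n\geq 3} n\,F_n = 2E$. Also $V = \sum_{n\geq 3} V_n$ and $F = \sum_{n\geq 3} F_n$. Here the sums start at $n=3$ because a polyhedral graph is $3$-connected and simple, so every vertex has degree at least $3$ and every face has at least $3$ sides.

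\textbf{Step 2: plug into Euler.} From $V - E + F = 2$ multiply by $4$ to get $4V - 4E + 4F = 8$. Now substitute $4E = \tfrac12\sum n V_n + \tfrac12 \sum n F_n$ (using both edge relations, each contributing $2E$), $4V = \sum_n 4 V_n$, and $4F = \sum_n 4F_n$. This yields
$$\sum_{n\geq 3}(4-n)(V_n+F_n) = 8.$$
Isolating the $n=3$ and $n=4$ terms (the $n=4$ term vanishes, and the $n=3$ term contributes $V_3+F_3$) and moving the $n\geq 5$ terms to the other side gives exactly
$$V_3 + F_3 = \sum_{n\geq 5}(n-4)(V_n+F_n) + 8,$$
which is the claim.

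\textbf{Main obstacle.} There is essentially no obstacle: the only thing to be careful about is justifying that the degree sums start at $3$ for both vertices and faces — i.e. that a polyhedral graph (being $3$-connected, simple, and planar) has no vertices of degree $\leq 2$ and no faces bounded by fewer than $3$ edges — which is immediate from the definition of polyhedral graph recalled in Section~\ref{sec:defterm}. One should also note the arithmetic identity $4E = 2E + 2E$ is being used to split the single quantity $E$ symmetrically between the vertex-degree sum and the face-degree sum; writing it out once makes the derivation transparent.
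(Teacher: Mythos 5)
Your proof is correct and follows essentially the same route as the paper's: the two double-counting identities $2E=\sum_{n\geq3} nV_n=\sum_{n\geq3} nF_n$ substituted into $4(V-E+F)=8$. One small arithmetic slip: the substitution should read $4E=\sum_{n\geq3} nV_n+\sum_{n\geq3} nF_n$ (i.e.\ $2E+2E$), not $\tfrac12\sum nV_n+\tfrac12\sum nF_n$, which equals only $2E$; your final display $\sum_{n\geq3}(4-n)(V_n+F_n)=8$ and the conclusion drawn from it are nonetheless the correct ones.
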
 
\begin{proof}
	Each edge is 
	adjacent to  vertices and faces exactly twice respectively. Hence we get  
	\begin{equation}\label{eq:2ev3f3}
		2E=\sum_{n\geq3}n V_n =\sum_{n\geq3}n F_n.
	\end{equation}
	
	Recall the Euler's formula $V - E + F =2$ and the following completes the proof.
	\begin{align*}
		V_3+F_3 &=   4E - 4V - 4F + V_3 + F_3 +8 \\
		&=   \sum_{n\geq3}n V_n + \sum_{n\geq3}n F_n -4\sum_{n\geq3}V_n  - 4\sum_{n\geq3}F_n +V_3 +F_3 +8 \\
		&= \sum_{n\geq5} (n-4)V_n + \sum_{n\geq5} (n-4)F_n + 8
	\end{align*}
\end{proof}

The following is a simple yet key observation  which is a straightforward consequence of  Lemma~\ref{lem:v3f3}.
\begin{lem}\label{lem:existencerigid}
	For every  polyhedral graph $P$, there always exists a rigid vertex.
\end{lem}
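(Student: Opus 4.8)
The plan is to argue by contradiction, combining Lemma~\ref{lem:v3f3} with a double-count of incidences between vertices and non-triangular faces. First note that any vertex of degree $3$ is automatically rigid, since it can be adjacent to at most three faces; hence if $V_3\ge 1$ we are done. So assume $V_3=0$ and, toward a contradiction, suppose that \emph{no} vertex is rigid, i.e. $\tau(v)\ge 4$ for every vertex $v$.

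Next I would count the number $N$ of incident pairs (vertex, adjacent non-triangular face) in two ways. Summing over vertices gives $N=\sum_v \tau(v)\ge 4V$ by the non-rigidity assumption. Summing over non-triangular faces gives $N=\sum_{n\ge4} nF_n = 2E-3F_3$, using $2E=\sum_{n\ge3}nF_n$ as in the proof of Lemma~\ref{lem:v3f3}. Combining the two counts yields $2E-3F_3\ge 4V$. Now I eliminate $E$ and $V$ using $V_3=0$: from $2E=\sum_{n\ge3}nV_n=\sum_{n\ge4}nV_n$ we get $4V=4\sum_{n\ge4}V_n=2E-\sum_{n\ge5}(n-4)V_n$, so the inequality rearranges to $\sum_{n\ge5}(n-4)V_n\ge 3F_3$.

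Finally, Lemma~\ref{lem:v3f3} with $V_3=0$ reads $F_3=\sum_{n\ge5}(n-4)(V_n+F_n)+8$, and since every term $(n-4)F_n$ with $n\ge5$ is nonnegative, this gives $\sum_{n\ge5}(n-4)V_n\le F_3-8$. Chaining the two bounds yields $3F_3\le F_3-8$, i.e. $F_3\le-4$, which is absurd since $F_3\ge0$. Hence a rigid vertex must exist.

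There is no serious obstacle here; the only points requiring care are (a) isolating the case $V_3=0$ so that degree-$3$ vertices do not pollute the incidence count (alternatively one can keep them and use the sharper bound $\sum_v\tau(v)\ge 4(V-V_3)$, noting degree-$3$ vertices are themselves rigid), and (b) getting the directions of the inequalities right when substituting the Euler-type identities. The essential content is simply that Lemma~\ref{lem:v3f3} forces enough triangular faces to be ``cheap'' that some vertex cannot be surrounded by four or more non-triangular faces.
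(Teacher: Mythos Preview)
Your proof is correct and takes essentially the same approach as the paper: a contradiction via double-counting vertex--face incidences combined with Lemma~\ref{lem:v3f3}. The only cosmetic difference is that the paper counts incidences with \emph{triangular} faces (obtaining $3F_3\le\sum_{n\ge5}(n-4)V_n$ directly), whereas you count the complementary incidences with \emph{non}-triangular faces and then rearrange to the same inequality; the contradiction with $F_3\ge\sum_{n\ge5}(n-4)V_n+8$ is identical.
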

\begin{proof}
	Suppose there are no rigid vertices.
	Then it is obvious that $ V_3 =0 $ and  all vertices must meet triangle faces at most $\deg (v) -4 $. Therefore we obtain
	$	3F_3 \leq \sum_{n\geq 5} (n-4) V_n$, which  contradicts to 	$ F_3 \geq \sum_{n\geq 5} (n-4)V_n + 8 $ established by Lemma~\ref{lem:v3f3}.
\end{proof}

Then we reprove  Stoker's theorem  using a completely novel method, as follows.
\begin{thm}\label{convexRigidity}
	Every strictly-convex polyhedron $P$ is uniquely determined by its dihedral angles and edge lengths.
\end{thm}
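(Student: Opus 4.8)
The plan is to prove the theorem by induction on the number of vertices $V$, with Lemma~\ref{lem:existencerigid} and Proposition~\ref{rigidvertexthm} supplying the two essential ingredients and the \emph{vertex reduction} operation advertised in the introduction doing the bookkeeping. Let $P$ and $Q$ be strictly-convex polyhedra of the same combinatorial type whose corresponding edge lengths and dihedral angles all agree; one must prove $P\cong Q$. For the base case, a strictly-convex polyhedron with $V=4$ is a tetrahedron, determined already by its edge lengths alone (each face by $SSS$), hence \emph{a fortiori} by edge lengths and dihedral angles.

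For the inductive step I would use Lemma~\ref{lem:existencerigid} to pick a rigid vertex $v$, so $\tau(v)\le 3$. By Proposition~\ref{rigidvertexthm} the triangularized neighborhoods $\N_v$ in $P$ and in $Q$ are isometric; after applying an ambient isometry of $\EE^3$, $\HH^3$, or $\SS^3$ to $Q$ one may assume $\N_v(P)=\N_v(Q)$ as subsets of space, so that in particular their common boundary polygon $\bdN_v$ coincides. Now carry out the vertex reduction at $v$: pass to the local triangulation $P_v$, delete $v$ together with the triangles of its star, and fill the resulting polygonal hole bounded by $\bdN_v$ by a fixed combinatorial recipe (a single face if $\bdN_v$ happens to be planar, otherwise a triangulation of $\bdN_v$), the \emph{same} recipe for $P$ and for $Q$; call the results $P'$ and $Q'$. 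Then $P'$ and $Q'$ have the same combinatorial type, with one fewer vertex, and they again satisfy the hypotheses of the theorem up to harmless flat edges: such flat edges can occur only among the newly added chords, which are each adjacent to a triangle, and absorbing them (merging coplanar faces) returns genuine strictly-convex polyhedra with the same vertex sets. Moreover every edge of $P'$ is either an edge of $P$, whose length is unchanged and whose dihedral angle is either unchanged or equals its value in $P$ minus an angle read off from the determined cap $\N_v$, or else one of the new chords, whose length and dihedral angles are determined by the common polygon $\bdN_v\subset\N_v$; hence the corresponding edge lengths and dihedral angles of $P'$ and $Q'$ agree.

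By the induction hypothesis $P'\cong Q'$, and since $P$ (respectively $Q$) is recovered from $P'$ (respectively $Q'$) by replacing the filling of the hole by the \emph{same} cap $\N_v$ glued along the \emph{same} boundary $\bdN_v$, the equivalence between congruence and coincidence of all edge lengths, facial angles and dihedral angles recorded in Section~\ref{sec:defterm} yields $P\cong Q$, completing the induction.

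The crux of the argument — and the reason the nonconvex version later demands the whole machinery of Sections~\ref{generalizedNonCon} and~\ref{nonconSph} — is the claim that the vertex reduction returns a valid (strictly-)convex polyhedron simultaneously for $P$ and $Q$. Concretely one must fix a canonical recipe for triangulating the non-triangular faces at $v$ in forming $P_v$ and for filling the hole, and then verify that the new faces are nondegenerate and pairwise nonoverlapping, that the realization stays embedded, and that the new dihedral angles lie in $(0,\pi]$ (for instance, at a boundary edge of $\bdN_v$ the new angle is $\theta-\alpha$ with $0<\alpha\le\theta<\pi$, since the deleted cap sits inside the dihedral wedge of $P$ there). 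In the strictly-convex setting I expect these to be routine consequences of convexity together with Theorem~\ref{convexsphpoly} and Corollary~\ref{lengthSphPolygon}; the hard part, deferred to the later sections, is recovering enough of this control once $P$ is allowed to be nonconvex or self-intersecting.
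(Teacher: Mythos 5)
Your overall architecture is the paper's: induction on the number of vertices, Lemma~\ref{lem:existencerigid} to find a rigid vertex $v$, Proposition~\ref{rigidvertexthm} to pin down $\N_v$ up to isometry, then a vertex reduction and reassembly. But there is a genuine gap exactly at the step you yourself flag as the crux and then defer: you fill the hole bounded by $\bdN_v$ with ``a fixed combinatorial recipe (\dots a triangulation of $\bdN_v$)'' and assert that strict convexity of $P'$ should follow routinely. It does not, for an arbitrary recipe. If $\bdN_v$ is a non-planar quadrilateral $\polygon{a\,b\,c\,d}$, one diagonal produces the convex ``roof'' while the other produces a reflex dihedral angle along the new chord; more generally only one of the many triangulations of the hole is compatible with convexity. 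Since your induction hypothesis is the theorem for \emph{strictly-convex} polyhedra, a wrong choice leaves you with a $P'$ to which the hypothesis simply does not apply, and your estimate that the new dihedral angle at a boundary edge of $\bdN_v$ is $\theta-\alpha$ with $0<\alpha\le\theta<\pi$ already presupposes that the filling sits inside the dihedral wedge --- which is a property of the correct filling, not of all fillings.

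The paper closes this gap by \emph{defining} $P'$ as the convex hull of $\V(P)\ssetminus v$ (with the footnoted care in $\SS^3$, where strict convexity places $P$ inside an open hemisphere so that shortest geodesics, hence the hull, are well defined). This single choice does all the work you were hoping was routine: it canonically selects the filling of the hole, guarantees $P'$ is strictly convex so the induction applies, and makes your ``absorb the flat edges by merging coplanar faces'' step automatic (coplanar new triangles are simply one face of the hull). Everything else in your argument --- that the new edges' lengths and dihedral angles, and the modified dihedral angles along $\bdN_v$, are computable from the isometrically determined cap $\N_v$, and that $P$ is recovered from $P'$ by regluing that cap --- then goes through as you wrote it. So the fix is small but not optional: replace ``a fixed combinatorial recipe'' by ``the faces of the convex hull of $\V(P)\ssetminus v$ (subdivided into triangles if desired),'' and the proof coincides with the paper's.
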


\begin{proof}
	We  use induction on the number of vertices.
	By Lemma~\ref{lem:existencerigid}, we know there exists at least one vertex $v$ for any strictly-convex polyhedron. 
	By Proposition~\ref{rigidvertexthm}, the $\N_v$ is uniquely determined up to isometry. 
	Furthermore,  we can get the polyhedron $P'$ as the convex hull\footnote{		
	In spherical geometry, we need to be careful as there can be multiple geodesics connecting two points. By considering the strictly-convex condition stated in the theorem, the polyhedron is contained in the interior of a hemisphere and we can utilize the convex hull by selecting the unique shortest geodesic segments.} of $\V(P) \ssetminus v$ from the previous polyhedron $P$ as in Figure~\ref{fig:removingrigidvertex}, where  
	the triangularized neighborhood $\N_v$ consists of five vertices $v$, $a$, $b$, $c$ and $d$. 
	\begin{figure}[H]
		\begin{center}
			\def\svgwidth{0.8\columnwidth} 
			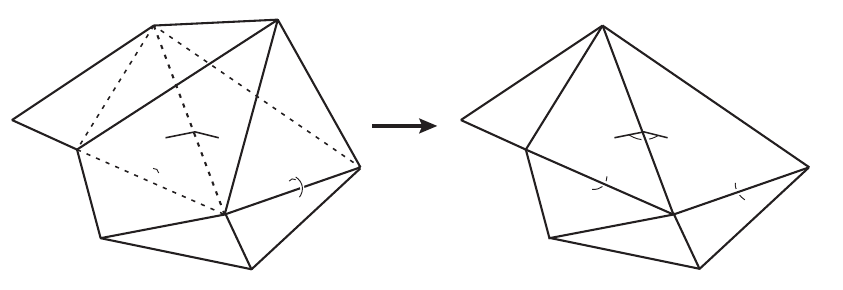
			\caption{
				$P'$ is the vertex reduction of $P$ at a rigid vertex $v$. 
			}
			\label{fig:removingrigidvertex}
		\end{center}
	\end{figure}
Note that all edge lengths and dihedral angles of $P'$ are entirely determined by $P$ and $\N_v$. This is because the position of the vertices of $\N_v$ is determined up to isometry according to Proposition~\ref{rigidvertexthm}, and the lengths and dihedral angles of any newly-added edges can be computed based on these vertex positions. 
As a result, the polyhedron $P'$ has fewer vertices due to removing one rigid vertex, is strictly-convex due to being obtained via the convex hull, and has all its edge lengths and dihedral angles determined.

Through induction, we  reach the case of $\V(P)=\V(\N_v)$ where $P$ is uniquely determined up to congruence since the vertex positions are determined up to isometry. Beyond this point, we don't need to proceed further. 
This should happen because the reduction process, which decreases the number of vertices, eventually leads to a tetrahedron.
In conclusion, by considering the reverse of the induction process, i.e., adding vertices one by one using the information of the previously removed $\N_v$'s, we can uniquely reconstruct the original polyhedron $P$.
\end{proof}

The most crucial part of the proof is the process of reducing the number of vertices to obtain a strictly-convex polyhedron. This concept, which we refer to as \emph{vertex reduction}, also plays a significant role when generalizing the theorem to the case of nonconvex polyhedra.

\section{Counterexamples}\label{counterexamples}

Before examining the case of nonconvex polyhedra, let us first consider various counterexamples where uniqueness does not hold when regarding nonconvex polyhedra. Through this, we obtain a list of necessary conditions for nonconvex polyhedra such that uniqueness is guaranteed by their dihedral angles and edge lengths.

First of all, in  spherical geometry, we need to decide how to handle bigon faces.  Let's keep in mind the following example that has a bigon as a face.
\begin{exam}[spherical  bigons]\label{sphBigon}
	Let's consider a shape in spherical geometry that we'll call a `polygonal-hedron'. This shape has only two vertices in antipodal positions and exclusively comprises bigon faces. It resembles the topological suspension of a polygon.
	  Notably, if the polygon has more then three edges, the polygonal-hedron  is flexible since the spherical polygon of the vertex figure can be continuously deformed while preserving its interior angles. This example illustrates that even if a compact region in $S^3$ is defined by the intersection of half-spaces, its 1-skeleton may not be 3-connected. Moreover, the convex hull of its vertices is not well-defined and, even if the definition were to be established, it would not coincide with the original shape of the polygonal-hedron.
\end{exam}
In Section~\ref{sec:defterm}, we established the condition that the 1-skeleton of a polyhedron should be a polyhedral graph. Thus, in this paper, the polygonal-hedron mentioned in the previous example is not considered as a polyhedron.

\begin{exam}[flat vertex]
If we allow flat vertices, the problem fundamentally becomes a two-dimensional matter, through which we can construct counterexamples. We first identify non-congruent plane graphs with fixed corresponding edge lengths while preserving the boundary polygon $B$. Next, consider a polyhedron $P$ with a face that is isometric to the polygon $B$. If the non-congruent plane graphs are plugged into $P$ along the boundary $B$, these produce counterexamples with flat vertices.
 If nonconvex faces are also allowed, it is easy to construct an 1-parameter family, as demonstrated in Figure~\ref{fig:nonconvexFaceFlatvertex}, where  edge lengths are fixed during flexing and all vertices inside $B$ are flat and  dihedral angles are all fixed at $\pi$.

	\begin{figure}[H]
	\begin{center}
		%% Creator: Inkscape 1.2.2 (732a01da63, 2022-12-09), www.inkscape.org
%% PDF/EPS/PS + LaTeX output extension by Johan Engelen, 2010
%% Accompanies image file 'flatVertexNonconvexFace.pdf' (pdf, eps, ps)
%%
%% To include the image in your LaTeX document, write
%%   \input{<filename>.pdf_tex}
%%  instead of
%%   \includegraphics{<filename>.pdf}
%% To scale the image, write
%%   \def\svgwidth{<desired width>}
%%   \input{<filename>.pdf_tex}
%%  instead of
%%   \includegraphics[width=<desired width>]{<filename>.pdf}
%%
%% Images with a different path to the parent latex file can
%% be accessed with the `import' package (which may need to be
%% installed) using
%%   \usepackage{import}
%% in the preamble, and then including the image with
%%   \import{<path to file>}{<filename>.pdf_tex}
%% Alternatively, one can specify
%%   \graphicspath{{<path to file>/}}
%% 
%% For more information, please see info/svg-inkscape on CTAN:
%%   http://tug.ctan.org/tex-archive/info/svg-inkscape
%%
\begingroup%
  \makeatletter%
  \providecommand\color[2][]{%
    \errmessage{(Inkscape) Color is used for the text in Inkscape, but the package 'color.sty' is not loaded}%
    \renewcommand\color[2][]{}%
  }%
  \providecommand\transparent[1]{%
    \errmessage{(Inkscape) Transparency is used (non-zero) for the text in Inkscape, but the package 'transparent.sty' is not loaded}%
    \renewcommand\transparent[1]{}%
  }%
  \providecommand\rotatebox[2]{#2}%
  \newcommand*\fsize{\dimexpr\f@size pt\relax}%
  \newcommand*\lineheight[1]{\fontsize{\fsize}{#1\fsize}\selectfont}%
  \ifx\svgwidth\undefined%
    \setlength{\unitlength}{94.04563324bp}%
    \ifx\svgscale\undefined%
      \relax%
    \else%
      \setlength{\unitlength}{\unitlength * \real{\svgscale}}%
    \fi%
  \else%
    \setlength{\unitlength}{\svgwidth}%
  \fi%
  \global\let\svgwidth\undefined%
  \global\let\svgscale\undefined%
  \makeatother%
  \begin{picture}(1,0.83589606)%
    \lineheight{1}%
    \setlength\tabcolsep{0pt}%
    \put(0,0){\includegraphics[width=\unitlength,page=1]{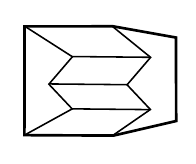}}%
    \put(0.17747789,0.4105764){\color[rgb]{0,0,0}\makebox(0,0)[lt]{\lineheight{1.25}\smash{\begin{tabular}[t]{l}$x$\end{tabular}}}}%
    \put(0.01199751,0.53686486){\color[rgb]{0,0,0}\makebox(0,0)[lt]{\lineheight{1.25}\smash{\begin{tabular}[t]{l}$B$\end{tabular}}}}%
    \put(0.69999083,0.39805733){\color[rgb]{0,0,0}\makebox(0,0)[lt]{\lineheight{1.25}\smash{\begin{tabular}[t]{l}$y$\end{tabular}}}}%
    \put(0,0){\includegraphics[width=\unitlength,page=2]{flatVertexNonconvexFace.pdf}}%
  \end{picture}%
\endgroup%

		\caption{
	A flexible pattern within the fixed hexagonal boundary $B$ (indicated by a bold line), where  the edge $\protect\edgepath{x\.y}$ can move horizontally}
	\label{fig:nonconvexFaceFlatvertex}
\end{center}
\end{figure}

\end{exam}

\vspace{-1em}
Even though we require each face to be strictly-convex, we can create a non-congruent pair of polyhedra using flat vertices, as follows.
	\begin{figure}[H]
	\begin{center}
		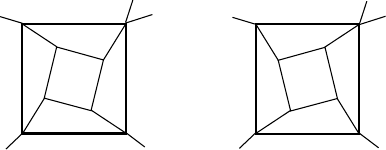
		\caption{
			A non-congruent pair with flat vertices and convex faces, where  all corresponding edge lengths are the same, and all dihedral angles in the bold square are $\pi$}
		\label{fig:convexFaceFlatvertex}
	\end{center}
\end{figure}

Therefore, we  exclude any nonconvex faces and flat vertices to ensure  unique realization. Nevertheless, there is still a counterexample even when all faces are strictly-convex and there are no flat vertices, provided that a flat edge is present, as demonstrated below.

\begin{exam}[flat edge]	\label{flatedge}
This example is a modification of the polyhedra in Figure~\ref{fig:convexFaceFlatvertex}.
More specifically, consider two regular cubes with six quadrilateral faces. Draw a flat, twisted prism graph on each face, as shown in Figure~\ref{fig:nonconvex_pi_angle}. We can choose two different twisting directions: left and right. These directions ensure that the corresponding edges are of the same length and the dihedral angles are $\pi$.
To avoid flat vertices, we attach a quadrilateral pyramid to the internal quadrilateral on each face. The resulting nonconvex polyhedra cannot be congruent to each other if we choose different twisting directions on each face, which prevents any symmetry of the polyhedra.
	\begin{figure}[H]
		\begin{center}
			%		\scriptsize
			\includegraphics[width=8cm]{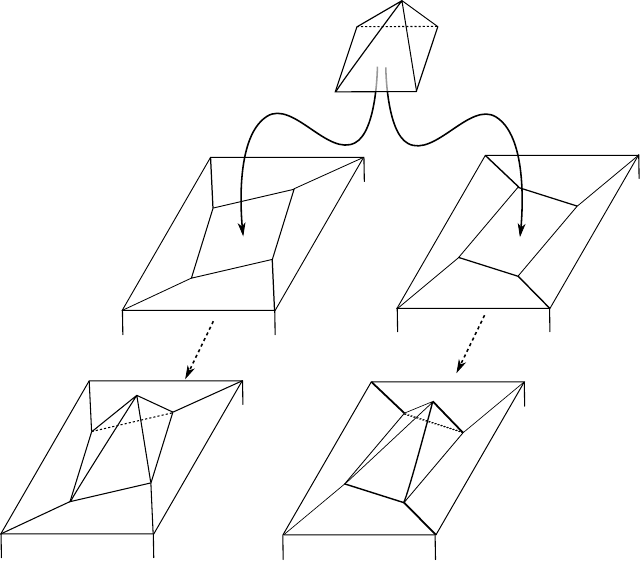}
			\caption{A counterexample that allows  flat edges but has no flat vertices  }
			\label{fig:nonconvex_pi_angle}
		\end{center}
	\end{figure}	
\end{exam}

Finally, even with the condition that all faces are strictly-convex and there are no flat edges, a counterexample still exists, as illustrated below.

\begin{exam}[partially-flat vertices]\label{ex:partially-flat-vertex}
Consider two regular cubes, each with six quadrilateral faces. On each of these faces, draw one of two graphs consisting of eight isosceles triangles and one square, as shown in Figure~\ref{fig:partiallyflat}.
Similar to Example~\ref{flatedge}, we can consider two different choices of twisting direction: left and right. In each case, the corresponding shaded isosceles triangles or square regions are congruent. Using this construction, we can create two different drawings on the cubes such that it is impossible for the two cubes to be congruent to each other by choosing various twisting directions on each face.
	\begin{figure}[H]
		\begin{center}
			\includegraphics{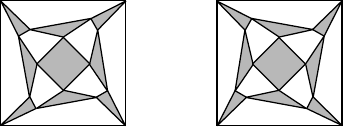}
			\caption{Corresponding faces for the counterexample  with strictly-convex faces and without flat edges}
		\label{fig:partiallyflat}
		\end{center}
	\end{figure}	
	\vspace{-1em}
As the final step, we attach a tetrahedron (or a 4-pyramid) to each shaded face. This is achieved by gluing the region to the base of the tetrahedron (or 4-pyramid), ensuring the corresponding edges match in length and dihedral angle. This process results in figures with strictly convex faces and no flat edges. However, they cannot be congruent to each other.
\end{exam}

%\begin{defn}
	
%\end{defn}

To summarize the discussion so far, in order to establish uniqueness  by dihedral angles and edge lengths,  at least two conditions are required: all faces are convex, and no vertices are partially-flat.

\section{Generalization to nonconvex polyhedra}\label{generalizedNonCon}
A basic idea for extending to the nonconvex case is essentially similar to Theorem~\ref{convexRigidity} for the strictly-convex polyhedra. Even with a nonconvex polyhedron, we are trying to find a vertex $v$ such that   $\N_v$ is uniquely determined by their dihedral angles and edge lengths and performing  vertex reduction at $v$.
%, and we can successively reduce them one by one.

\subsection{Strongly-rigid vertex}
Let us begin by considering a combinatorially stronger concept than  rigid vertex.  We recall that $\tau(v)$ represents the number of non-triangular faces adjacent to a vertex $v$.
\begin{defn}\label{def:strongly-rigid}
	A vertex $v$ is \emph{strongly-rigid} if %one of the followings holds,
	\begin{enumerate}[\hspace{13em}(i)] 
		\item $\tau(v) \leq 3 $   for $ \deg(v) \leq  4$,	
		\item $\tau(v) \leq 1  $  \text{ for } $\deg(v)\geq 5$.
%		\item $\tau(v) = 0  $  \text{ for } $\deg(v)\geq 6$.
	\end{enumerate}
\end{defn}
The condition for a vertex to be strongly-rigid is a stronger requirement compared to being rigid, but the proof of existence follows a nearly identical approach. Similar to Lemma~\ref{lem:existencerigid}, we establish the existence of a strongly-rigid vertex, as follows.

\begin{lem}\label{exist-strong}
	For every polyhedral graph, 	there always exists a strongly-rigid vertex.
\end{lem}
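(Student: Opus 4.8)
The plan is to mimic the proof of Lemma~\ref{lem:existencerigid}, again arguing by contradiction via the counting identity of Lemma~\ref{lem:v3f3}, but now keeping track of how many triangular faces a non-strongly-rigid vertex of each degree is forced to lose. Suppose, for contradiction, that $P$ has no strongly-rigid vertex. Then: a vertex $v$ with $\deg(v)=3$ must fail (i), so $\tau(v)\geq 4$ is impossible (it has only $3$ faces) — hence there are \emph{no} degree-$3$ vertices at all, i.e. $V_3=0$; a vertex with $\deg(v)=4$ must also fail (i), so $\tau(v)\geq 4$, meaning all four faces at $v$ are non-triangular, contributing $0$ adjacent triangles; and a vertex with $\deg(v)=n\geq 5$ must fail (ii), so $\tau(v)\geq 2$, hence $v$ is adjacent to at most $n-2$ triangular faces.

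With these bounds I would count incidences between triangular faces and vertices. Each triangular face contributes exactly $3$ such incidences, so the total is $3F_3$. On the other side, summing the per-vertex bounds and using $V_3=V_4^{\text{(contributes }0)}$: vertices of degree $3$ and $4$ contribute $0$, and a vertex of degree $n\geq 5$ contributes at most $n-2$ incidences. Hence
\begin{equation}\label{eq:strongcount}
3F_3 \;\leq\; \sum_{n\geq 5}(n-2)V_n.
\end{equation}
On the other hand, since $V_3=0$, Lemma~\ref{lem:v3f3} gives
\begin{equation}\label{eq:strongcount2}
F_3 \;=\; \sum_{n\geq 5}(n-4)(V_n+F_n)+8 \;\geq\; \sum_{n\geq 5}(n-4)V_n + 8.
\end{equation}
Combining \eqref{eq:strongcount} and \eqref{eq:strongcount2} yields $3\sum_{n\geq 5}(n-4)V_n + 24 \leq \sum_{n\geq 5}(n-2)V_n$, i.e. $\sum_{n\geq 5}\bigl(3(n-4)-(n-2)\bigr)V_n \leq -24$, that is $\sum_{n\geq 5}(2n-10)V_n \leq -24$. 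But $2n-10\geq 0$ for all $n\geq 5$, so the left-hand side is nonnegative, a contradiction. This proves the lemma.

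The one point that needs a little care — and which I expect to be the only real obstacle — is verifying that the degree-$3$ and degree-$4$ cases really do contribute $0$ triangular incidences, and that the incidence count $3F_3$ does not double-count anything. For degree $3$: failing strong-rigidity means violating $\tau(v)\le 3$, impossible for a $3$-valent vertex, so no such vertex exists. For degree $4$: violating $\tau(v)\le 3$ forces $\tau(v)=4$, so all four adjacent faces are non-triangular, contributing $0$. For degree $n\ge 5$: violating $\tau(v)\le 1$ forces $\tau(v)\ge 2$, so at least two adjacent faces are non-triangular and at most $n-2$ are triangular. The incidence $3F_3=\sum_v(\text{number of triangular faces at }v)$ is just the standard double count of (triangular face, incident vertex) pairs, each triangle having exactly three vertices; since $P$ is a simple graph a triangular face is incident to three distinct vertices, so there is no degeneracy. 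Everything else is the same Euler-characteristic bookkeeping already carried out in Lemma~\ref{lem:v3f3} and Lemma~\ref{lem:existencerigid}.
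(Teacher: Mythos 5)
Your proposal is correct and follows essentially the same argument as the paper: both assume no strongly-rigid vertex exists, deduce $V_3=0$ and that $3$- and $4$-valent vertices contribute no triangular incidences, bound $3F_3\leq\sum_{n\geq5}(n-2)V_n$, and contradict the lower bound $3F_3\geq 3\sum_{n\geq5}(n-4)V_n+24$ from Lemma~\ref{lem:v3f3}. The only difference is that you make the final algebraic contradiction explicit, which the paper leaves to the reader.
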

\begin{proof}
	Suppose that a polyhedral graph $P$ has no strongly-rigid vertex. Let us count the adjacent triangular faces at each vertex. There is no 3-valent vertex since all 3-valent vertices are strongly-rigid. If a 4-valent vertex has any triangular face, it becomes strongly-rigid. 
So  there is no room for triangular faces at 3-valent or 4-valent vertices.
Consequently,  the number of triangular faces is at most $(n-2)$ for each $n$-valent vertex with $n\geq 5$.  We  express this inequality as:
	$$
	3F_3 \leq 3V_5 +4V_6+5V_7+\cdots.
	$$
	At the same time, we already have the following inequality from Lemma~\ref{lem:v3f3}. 
	$$3 F_3\geq 3 (V_5+2V_6+3V_7+\cdots+8 )= 3V_5+6V_6 + 9V_7+\cdots +24.$$
	These two inequalities contradictory to each other and it completes the proof. 
\end{proof}

\subsection{Vertex reduction}

When dealing with strictly-convex polyhedra, taking the convex hull is an appropriate method for performing vertex reduction. However, in nonconvex cases, the convex hull approach is not suitable. To address this, we first define vertex reduction combinatorially as follows.
 
\begin{defn}[vertex reduction] \label{def:vertexreduction}
Let  $P$ be a polyhedral graph $P$ and $P_v$ be the \emph{local triangulation} of $P$ at a vertex  $v$.  
Let $P'$ be obtained from $P_v$ by replacing the neighborhood~$\N_v$ of $v$ with $\tN_v$ by identifying the boundary $\bdN_v$:
$$P':=(P_v \ssetminus v) \bigcup\limits_{\bdN_v}  \tN_v \,,$$
where $P_v\ssetminus v$ is the subgraph obtained from $P_v$ by removing the vertex $v$ and all edges adjacent to $v$, and $\tN_v$ is a triangulation of  $\bdN_v$ without adding any additional vertices.	
We call $P'$ the \emph{vertex reduction} of $P$ at a vertex $v$.
\end{defn}
 Note that it is crucial to ensure that newly-added faces in $P'$ are always triangles. This is very important for handling flat edges in Proposition~\ref{rigidityvertexreduction}.
It is obvious that  $\V(P') = \V(P) \ssetminus v$ and 
the number of vertices  of $P'$ is one less than $|\V(P)|$.
The following proposition establish that we can always find a sequence of vertex reductions that consists of polyhedral graphs.

\begin{prop}[existence of a reduction sequence] \label{reductionSeq}
	Let us consider a  polyhedral graph~$P$.
	Then, there is a sequence of polyhedral graphs $P_0(=P)$, $P_1$, \dots, $P_N$ such that $P_{i+1}$ is the vertex reduction of $P_i$ at a strongly-rigid vertex  and  $P_N$ has a strongly-rigid vertex $v$ with $\V(\N_v)=\V(P_N)$.
\end{prop}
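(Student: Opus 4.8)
The plan is to prove the statement by repeated application of Lemma~\ref{exist-strong} combined with a monovariant argument. Since the polyhedral graph is a combinatorial object, we simply iterate the operation ``find a strongly-rigid vertex and perform vertex reduction there'' and check that (a) the output of each step is again a polyhedral graph, so the process can be continued, and (b) the process terminates with a graph $P_N$ having the desired property $\V(\N_v) = \V(P_N)$ for some strongly-rigid vertex $v$. The number of vertices strictly decreases by one at each step by the remark following Definition~\ref{def:vertexreduction}, so termination is automatic; the content is entirely in (a) and in identifying the correct terminal condition.

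First I would verify that the vertex reduction $P'$ of a polyhedral graph $P$ at a strongly-rigid vertex $v$ is again a polyhedral graph, i.e.\ a $3$-connected simple planar graph embedded on the oriented $2$-sphere. Planarity and the sphere embedding are clear since we are replacing the disk $\N_v$ by another triangulated disk $\tN_v$ with the same boundary $\bdN_v$, without introducing new vertices. Simplicity (no $1$-cycles, no $2$-cycles) must be checked: since $\tN_v$ triangulates the $n$-gon $\bdN_v$ where $n = \deg(v)$, a chord of $\bdN_v$ could a priori coincide with an already-existing edge of $P_v \ssetminus v$, creating a $2$-cycle. This is where the strongly-rigid hypothesis is used: when $\deg(v) \le 4$ the boundary $\bdN_v$ has at most $4$ edges and the triangulation $\tN_v$ adds at most one chord, and when $\deg(v) \ge 5$ the condition $\tau(v) \le 1$ controls how the triangulation behaves; in both cases one can choose $\tN_v$ so that no chord duplicates an existing edge, using $3$-connectivity of $P_v \ssetminus v$ to route around the obstruction when $n$ is large. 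I would then argue $3$-connectivity of $P'$ directly: removing any two vertices of $P'$ leaves it connected, because $P_v \ssetminus v$ inherits enough connectivity from the $3$-connected $P$ and the triangulated patch $\tN_v$ glues back without creating a $2$-cut.

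Next I would address termination and the terminal condition. Each reduction decreases $|\V|$ by one, so after finitely many steps we reach a graph in which further reduction is ``trivial'' in the sense that a strongly-rigid vertex $v$ satisfies $\V(\N_v) = \V(P_N)$ — equivalently, the triangularized neighborhood of $v$ already exhausts all vertices. Since every polyhedral graph has at least four vertices and, by Lemma~\ref{exist-strong}, always has a strongly-rigid vertex, I would show that once $|\V(P_N)|$ is small enough (the tetrahedron being the extreme case, but more generally once the strongly-rigid vertex found by the lemma has $\deg$ equal to $|\V(P_N)| - 1$), the condition $\V(\N_v) = \V(P_N)$ holds and we stop. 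Strictly, one runs the reduction process as long as \emph{no} strongly-rigid vertex satisfies $\V(\N_v) = \V(P_N)$; since $|\V|$ cannot decrease forever, the process must halt, and it can only halt when the terminal condition is met.

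The main obstacle I anticipate is step (a): showing that one can \emph{always choose} the triangulation $\tN_v$ of the possibly non-planar $n$-gon $\bdN_v$ so that the resulting graph $P'$ stays simple (no duplicated edges) and $3$-connected. For $\deg(v) \le 4$ this is a short case check, but for $\deg(v) \ge 5$ with $\tau(v) \le 1$ one must argue carefully that among the many triangulations of an $n$-gon there is at least one whose chords avoid all pre-existing edges of $P_v \ssetminus v$, and that this choice does not destroy $3$-connectivity. I expect this to rely on a counting or fan-triangulation argument anchored at a well-chosen boundary vertex, together with the $3$-connectivity of $P$ to guarantee the boundary vertices of $\bdN_v$ are pairwise non-adjacent in $P_v \ssetminus v$ except along $\bdN_v$ itself; the rest of the proof is bookkeeping.
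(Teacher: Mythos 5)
Your overall architecture (find a strongly-rigid vertex by Lemma~\ref{exist-strong}, reduce, repeat, terminate by the decreasing vertex count) matches the paper, but the one step you yourself flag as ``the main obstacle'' --- that one can always \emph{choose} a triangulation $\tN_v$ of $\bdN_v$ so that $P'$ is again a simple $3$-connected planar graph --- is exactly the nontrivial content of the proposition, and your proposal does not actually prove it. You defer it to ``a counting or fan-triangulation argument'' plus ``bookkeeping,'' and the specific mechanisms you gesture at do not hold up: $P_v \ssetminus v$ is in general only $2$-connected, not $3$-connected; the boundary vertices of $\bdN_v$ can perfectly well be joined by edges of $P$ lying outside the star of $v$ (so they are \emph{not} ``pairwise non-adjacent except along $\bdN_v$''); and the hypothesis $\tau(v)\leq 1$ for $\deg(v)\geq 5$ plays no role in making chords available --- strong rigidity is needed later for the geometric rigidity of $\N_v$, not for the combinatorics of this step. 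The paper's own Figure~\ref{fig:vertexreductionexample}~(c) shows that avoiding duplicated edges is not enough: a legitimate triangulation can still destroy $3$-connectivity, so your claim that $3$-connectivity follows ``directly'' is unsupported.

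The paper closes this gap by a completely different, non-combinatorial device: by Steinitz's theorem take a strictly-convex Euclidean realization $\mathbf P$ of $P$, delete the vertex $v$, and form the convex hull of $\V(\mathbf P)\ssetminus v$. Since $\V(\mathbf P)\ssetminus v$ is not coplanar (else $\V(\N_v)=\V(P)$ and we are at the terminal case), the hull is a genuine strictly-convex $3$-polytope, so its $1$-skeleton is automatically a polyhedral graph; the new faces filling the hole triangulate $\bdN_v$ after subdividing any non-triangular ones without adding vertices, and this \emph{is} the good choice of $\tN_v$. If you want to keep your purely combinatorial route you would need to supply an actual argument for the existence of a simplicity- and $3$-connectivity-preserving triangulation of an $n$-gon boundary for all $n$; as written, the proposal asserts the conclusion of the hardest step rather than proving it.
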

\begin{proof}
	We only need to demonstrate that a single vertex reduction is possible. 
	To better understand each step, refer to Figure~\ref{fig:vertexreductionexample}. 
	Firstly, by Lemma~\ref{exist-strong}, we can always find a strongly-rigid vertex $v$ in the polyhedral graph $P (\neq P_N)$. 
We  assume that	$\V(\N_v)\subsetneq \V(P_N)$, otherwise it signifies   termination condition of the induction.
     Then, we  replace $\N_v$ with $\tN_v$ along with $\bdN_v$.
	 Note that if you make an arbitrary choice of $\tN_v$ during vertex reduction, the resulting graph $P'$ may not remain 3-connected as shown in Figure~\ref{fig:vertexreductionexample}~(c).
	 	
	 \begin{figure}[H]
	 	\begin{center}
	 		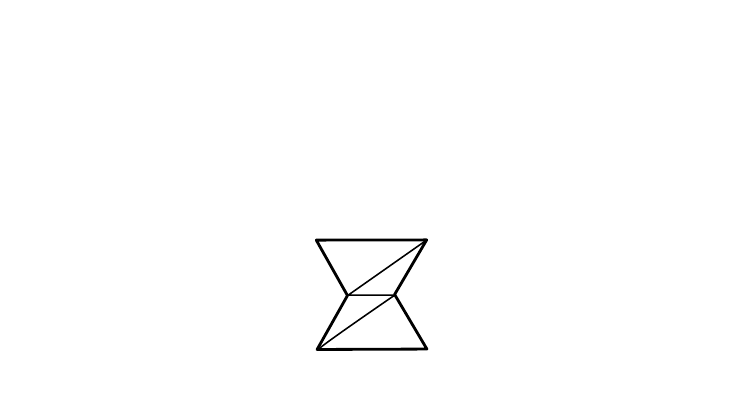
	 		\caption{ $P'$ is the vertex reduction at $v$ of $P$. }
	 		\label{fig:vertexreductionexample}
	 	\end{center}
	 \end{figure}
	 	 
	To ensure that $P'$ is a polyhedral graph, we first consider another arbitrary strictly-convex realization $\mathbf{P}$ of  $P$ by Steinitz's theorem. After removing $v$ of $\mathbf{P}$, we obtain a reduced realized polyhedron $\mathbf P'$ by performing  convex hull, as in Theorem~\ref{convexRigidity}. 
It is clear that $\V(\mathbf P)\ssetminus v$ cannot lie in a plane. If it were otherwise, given that $\mathbf P$ is a strictly-convex realization, we would have $\V(\N_v) = \V(P)$. Consequently, the convex hull $\mathbf P'$ must be a 3-dimensional strictly-convex polyhedron, and its combinatorial type must correspond to a polyhedral graph.
	If a non-triangular face appears in $\mathbf P'$ as illustrated in Figure~\ref{fig:vertexreductionexample}~(d), we subdivide it into triangles further without introducing a new vertex. 
By this strategy, we can always find  a good choice of local triangulation $\tN_v$ as in Figure~\ref{fig:vertexreductionexample}~(e) and 
 obtain a polyhedral graph~$P'$ from the combinatorial type of $\mathbf P'$ as in Figure~\ref{fig:vertexreductionexample}~(f). Note that the combinatorial type of $\mathbf P'$ is not the same as $P'$ itself, as $P'$ may have additional subdivisions.
	This induction procedure always concludes when $\V(\N_v)= \V(P)$, as described in Theorem~\ref{convexRigidity}. We can thus obtain a finite sequence  $P_0(=P)$, $P_1$, \dots, $P_N$ of polyhedral graphs, progressively reducing the number of vertices.
\end{proof}
When we perform vertex reduction on $P$ to obtain $P'$, the newly modified faces and edges come from $\tN_v$. These are referred to as \emph{newly-added faces} and \emph{newly-added edges}, respectively. The fact that all newly-added faces are triangles will play a crucial role in Section \ref{sec:proofmain}.

Remark that in this paper, there is often no need to distinguish between a polyhedral graph and its realization. However, in this proof, such a distinction is crucial. For this reason, we introduce specific notations: the usual $P$ to denote a combinatorial graph, and the bold $\mathbf P$ to denote  a 3-dimensional geometric polyhedron.

\subsection{Proof of the main theorem}\label{sec:proofmain}
The strategy to prove the main theorem (Theorem~\ref{mainThm}) is quite simple, as  sketched  below.
For a given geometric polyhedron $P$, we consider a reduction sequence $P_0=P$, $P_1$, $\dots$, $P_N$, where each $P_i$ has a corresponding strongly-rigid vertex $v_i$, according to Proposition \ref{reductionSeq}.
 When  the induction terminates at the case of $\V(\N_v)=\V(P)$, the geometric realization of $\N_v$  determines the final $P_N$ uniquely up to congruence. This is because  two polyhedra with the same combinatorial type must be congruent if their vertex sets  are realized up to isometry.
Therefore, the important task is to prove that $\N_{v_i}$ is uniquely determined by the dihedral angles and edge lengths of $P_i$. Then, the dihedral angles and edge lengths of $P_{i+1}$  can be computed from those of $P_i$ along with the vertex positions of $\N_{v_i}$, vice versa. 

To geometrically obtain $\N_v$, we need Condition (i) which requires convex faces. This is necessary because we must be able to cut geometrically along $\bdN_v$ for non-triangular faces adjacent to $v$. Then, we can proceed with the vertex reduction: first by cutting off $\N_v$ from $P_v$, and then by gluing $\tN_v$ along $\bdN_v$. This process is not merely combinatorial; it is geometrically well-behaved due to the rigidity of $\N_v$. 

Furthermore, throughout this iterative process, we need Condition (iii) to ensure that newly-added faces are never degenerate.  Facial angles of $0$ or $\pi$ indicate collinearity of the triangle's vertices.
Note that, even in spherical polyhedra, we can choose the realization of $\tN_v$ such that it has only convex triangular faces. This is possible by selecting edges with lengths less than~$\pi$ since there is no antipodal pair of vertices in $P$ due to Condition (iii) in Theorem~\ref{mainThm}.
The induction continues until we arrive at $P_N$, at which point the induction terminates. Then we can reconstruct the original polyhedron $P$ uniquely along the reverse sequence $P_N, P_{N-1}, \dots, P_0(=P)$.

Consequently, the remaining task is to prove the rigidity of $\N_v$ for any strongly-rigid vertex~$v$ that emerges during reduction  procedures.
Note that a flat edge may be generated during vertex reduction even though the original polyhedron does not contain any flat edges, as illustrated in Figure~\ref{fig:flatEdge}.

\begin{figure}[H]
	\begin{center}
		%% Creator: Inkscape 1.2.2 (b0a84865, 2022-12-01), www.inkscape.org
%% PDF/EPS/PS + LaTeX output extension by Johan Engelen, 2010
%% Accompanies image file 'flatEdge.pdf' (pdf, eps, ps)
%%
%% To include the image in your LaTeX document, write
%%   \input{<filename>.pdf_tex}
%%  instead of
%%   \includegraphics{<filename>.pdf}
%% To scale the image, write
%%   \def\svgwidth{<desired width>}
%%   \input{<filename>.pdf_tex}
%%  instead of
%%   \includegraphics[width=<desired width>]{<filename>.pdf}
%%
%% Images with a different path to the parent latex file can
%% be accessed with the `import' package (which may need to be
%% installed) using
%%   \usepackage{import}
%% in the preamble, and then including the image with
%%   \import{<path to file>}{<filename>.pdf_tex}
%% Alternatively, one can specify
%%   \graphicspath{{<path to file>/}}
%% 
%% For more information, please see info/svg-inkscape on CTAN:
%%   http://tug.ctan.org/tex-archive/info/svg-inkscape
%%
\begingroup%
  \makeatletter%
  \providecommand\color[2][]{%
    \errmessage{(Inkscape) Color is used for the text in Inkscape, but the package 'color.sty' is not loaded}%
    \renewcommand\color[2][]{}%
  }%
  \providecommand\transparent[1]{%
    \errmessage{(Inkscape) Transparency is used (non-zero) for the text in Inkscape, but the package 'transparent.sty' is not loaded}%
    \renewcommand\transparent[1]{}%
  }%
  \providecommand\rotatebox[2]{#2}%
  \newcommand*\fsize{\dimexpr\f@size pt\relax}%
  \newcommand*\lineheight[1]{\fontsize{\fsize}{#1\fsize}\selectfont}%
  \ifx\svgwidth\undefined%
    \setlength{\unitlength}{168.8167285bp}%
    \ifx\svgscale\undefined%
      \relax%
    \else%
      \setlength{\unitlength}{\unitlength * \real{\svgscale}}%
    \fi%
  \else%
    \setlength{\unitlength}{\svgwidth}%
  \fi%
  \global\let\svgwidth\undefined%
  \global\let\svgscale\undefined%
  \makeatother%
  \begin{picture}(1,0.43204737)%
    \lineheight{1}%
    \setlength\tabcolsep{0pt}%
    \put(0,0){\includegraphics[width=\unitlength,page=1]{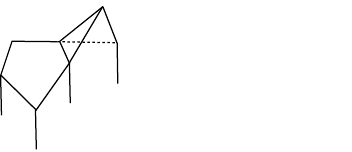}}%
    \put(0.11813724,0.34697341){\color[rgb]{0,0,0}\makebox(0,0)[lt]{\lineheight{1.25}\smash{\begin{tabular}[t]{l}$P$\end{tabular}}}}%
    \put(0.1394004,0.26283056){\color[rgb]{0,0,0}\makebox(0,0)[lt]{\lineheight{1.25}\smash{\begin{tabular}[t]{l}$e$\end{tabular}}}}%
    \put(0.30148991,0.41892613){\color[rgb]{0,0,0}\makebox(0,0)[lt]{\lineheight{1.25}\smash{\begin{tabular}[t]{l}$v$\end{tabular}}}}%
    \put(0,0){\includegraphics[width=\unitlength,page=2]{flatEdge.pdf}}%
    \put(0.78139604,0.33962761){\color[rgb]{0,0,0}\makebox(0,0)[lt]{\lineheight{1.25}\smash{\begin{tabular}[t]{l}$P'$\end{tabular}}}}%
    \put(0.80265921,0.25548477){\color[rgb]{0,0,0}\makebox(0,0)[lt]{\lineheight{1.25}\smash{\begin{tabular}[t]{l}$e$\end{tabular}}}}%
    \put(0.96474873,0.41158038){\color[rgb]{0,0,0}\makebox(0,0)[lt]{\lineheight{1.25}\smash{\begin{tabular}[t]{l}$v$\end{tabular}}}}%
    \put(0,0){\includegraphics[width=\unitlength,page=3]{flatEdge.pdf}}%
  \end{picture}%
\endgroup%

		\caption{ A flat edge $e$ may appear during vertex reduction at $v$.}
		\label{fig:flatEdge}
	\end{center}
\end{figure}

Therefore, it is necessary to consider a vertex reduction involving flat edges.
Fortunately, the following proposition states that higher-degree cases are resolved easily.
\begin{prop}\label{deg5SvRigid}
	Let   $v$ be a strongly-rigid vertex. 
		If $\deg(v) \geq 5$ and each triangle face adjacent to $v$ is convex, then $\N_v$ is uniquely determined by the dihedral angles and edge lengths adjacent to $v$.	
	In particular, 
	even if there are $\pi$ or $0$ dihedral angles near $v$, it does not matter.
\end{prop}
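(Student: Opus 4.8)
The plan is to translate the claim into a reconstruction problem for the vertex figure $S_v$ and then to solve that problem by a deterministic ``walk'' around $\partial S_v$, in a way that never uses convexity of $S_v$.

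\emph{Reduction to $S_v$.} Place $v$ at a base point and write each neighbour of $v$ as $w_i=\exp_v(\ell_i u_i)$, where $\ell_i=|vw_i|$ is a genuine edge length of the polyhedron and $u_i$ is a unit direction at $v$. On the sphere of directions $\SS^2$ at $v$, the angle $\angle(u_i,u_{i+1})$ equals the facial angle of the face $vw_iw_{i+1}$ at $v$, and the interior angle of the spherical polygon $S_v=(u_1,\dots,u_n)$ at $u_i$ equals the dihedral angle of the polyhedron along $vw_i$. It follows that $\N_v$ is determined up to congruence as soon as the points $u_1,\dots,u_n$ are determined up to an isometry of $\SS^2$ and the lengths $\ell_i$ are known; the latter are among the given data, so it suffices to pin down $u_1,\dots,u_n$.

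\emph{The known data on $S_v$.} Since $\deg(v)=n\ge 5$, strong-rigidity (Definition~\ref{def:strongly-rigid}(ii)) forces $\tau(v)\le 1$, so at least $n-1$ of the faces at $v$ are triangles; relabel so that these are $vw_1w_2,\dots,vw_{n-1}w_n$. Each of them is convex by hypothesis, hence strictly-convex, so by Corollary~\ref{lengthSphPolygon} all of its sides are shorter than $\pi$; since those sides are edges of the polyhedron their lengths are known, and the spherical cosine rule then determines the angle of the triangle at $v$, which lies in $(0,\pi)$. Thus the edge lengths $|u_1u_2|,\dots,|u_{n-1}u_n|$ of $S_v$ are known and each lies in $(0,\pi)$, while all $n$ interior angles of $S_v$ are given; only the single edge length $|u_nu_1|$ may be unknown (and it is known as well when $\tau(v)=0$).

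\emph{The walk.} Fix $u_1$ and the direction of the arc $u_1u_2$, normalizing away the action of isometries of $\SS^2$, and place $u_2$ at distance $|u_1u_2|<\pi$ from $u_1$. For $i=2,\dots,n-1$, the direction of the incoming arc $u_{i-1}u_i$ is already known; rotating it at $u_i$, to the side prescribed by the orientation inherited from the oriented polyhedral surface, through the given interior angle of $S_v$ at $u_i$ is a definite isometry of $\SS^2$, and this fixes the direction of $u_iu_{i+1}$; moving the known distance $|u_iu_{i+1}|<\pi$ along it places $u_{i+1}$ unambiguously. This step is literally unchanged when an interior angle equals $0$ or $\pi$ (the rotation is then a half-turn, respectively the identity), which is precisely the assertion that $0$ or $\pi$ dihedral angles near $v$ cause no trouble. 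Hence $u_1,\dots,u_n$ are determined; and even if $|u_nu_1|$ was not among the known lengths, the arc $u_nu_1$ is recovered anyway, since its direction at $u_1$ is obtained by rotating the now-known direction of $u_1u_2$ through the given dihedral angle along $vw_1$, and following it from $u_1$ meets the already-placed point $u_n$ at a definite distance. By the reduction above, $\N_v$ is uniquely determined up to congruence.

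The only genuinely delicate points, and the places where care is needed, are: (a)~every side length used in the walk lies strictly in $(0,\pi)$, so that ``go a known distance in a known direction'' has a unique answer in $\SS^3$ --- this is the sole place where convexity of the triangular faces enters, via Corollary~\ref{lengthSphPolygon}, and it is vacuous in $\EE^3$ and $\HH^3$; (b)~the turning operation at each vertex is pinned down by the orientation of the polyhedral surface, so that no left/right ambiguity survives even at singular angles; and (c)~the handling of the possible non-triangular face, where the missing edge of $S_v$ is recovered not from its length but from the dihedral angle at one of its endpoints. None of this uses any convexity of $S_v$ itself, which is why the argument applies without change to saddle (nonconvex) vertices.
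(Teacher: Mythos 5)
Your proposal is correct and follows essentially the same route as the paper: use strong-rigidity to see that at most one facial angle at $v$ (equivalently, one edge length of $S_v$) is unknown, determine the others from the convex triangular faces, and then construct $S_v$ uniquely by laying down edges sequentially at the prescribed interior angles, noting that the final vertex is placed before the last edge is needed. Your write-up is more explicit about why each step of the walk is unambiguous (side lengths in $(0,\pi)$, orientation fixing the turning side, singular angles still giving a well-defined rotation), but the underlying argument is the same.
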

\begin{proof}
	Since every adjacent triangle face is convex,  its interior angle is determined by edge lengths. Therefore, 
	the number of non-triangular faces, and thus the number of unknown facial angles, is at most one. In the vertex figure $S_v$, we can determine all interior angles and edge lengths except for at most one edge. Every polygon can be uniquely constructed by sequentially drawing its edges and connecting them at the interior angles. The polygon is determined without the final edge length since all vertices are determined without the edge.
\end{proof}

The cases of $\deg(v) = 3$ and $\deg(v) = 4$ require special attention  since they  have more than one unknown facial angle. To handle these cases properly we employ `generalized trigonometry', which deals with  angles greater than $\pi$. These will be fully discussed in the subsequent sections, and for now we use the results to obtain the following proposition.

\begin{prop}\label{deg34SvRigid}
	Let $v$ be a strongly-rigid vertex, and let the dihedral angles and edge lengths of $\N_v$ be given. If $\deg(v) = 3$ or $\deg(v) = 4$, then $\N_v$ is uniquely determined under the\ conditions:
	(i) $v$ has no flat edge,
	(ii) all adjacent facial angles are less than $\pi$.
\end{prop}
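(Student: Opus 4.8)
The plan is to transfer the entire statement to the vertex figure $S_v$, a spherical $\deg(v)$-gon whose interior angles are the dihedral angles of $P$ (equivalently of $P_v$) along the edges incident to $v$ and whose edge lengths are the facial angles at $v$. This transfer, and hence the whole argument, is literally the same in all three geometries, since $S_v$ always lives on a unit round sphere and only the very last step uses the ambient metric. First I would set up the dictionary between the available data --- the dihedral angles of $P$ at the edges at $v$, and the lengths of those edges of $\N_v$ that already belong to $P$ --- and $S_v$: by Condition~(i) no interior angle of $S_v$ is $0$ or $\pi$; by Condition~(ii) every edge length of $S_v$ is less than $\pi$. Of the $\deg(v)$ sides of $S_v$, the $\tau(v)$ coming from non-triangular faces of $P$ have \emph{a priori} unknown length --- their base edge $u_iu_{i+1}$ in $\N_v$ is a newly-added edge --- while each of the remaining $\deg(v)-\tau(v)$ sides is already determined: for a triangular face $vu_iu_{i+1}$ all of $|vu_i|$, $|vu_{i+1}|$, $|u_iu_{i+1}|$ are edges of $P$, so the enclosed facial angle at $v$ is fixed by the spherical law of cosines, which applies because $|vu_i|,|vu_{i+1}|<\pi$ by Corollary~\ref{lengthSphPolygon} (see the generalized trigonometry of Section~\ref{sphTrigonometry}). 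Finally, the dihedral angle of $P_v$ along any newly-added edge equals $\pi$, that edge being a diagonal of a convex face of $P$, so no dihedral datum along $\bdN_v$ is actually missing.

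Next I would settle the two cases with the rigidity statements for small spherical polygons. If $\deg(v)=3$, then $S_v$ is a spherical triangle whose three angles are given and non-singular and whose three sides are less than $\pi$; the generalized spherical law of cosines (Section~\ref{sphTrigonometry}) recovers all three side lengths, so $S_v$ is determined up to orientation-preserving isometry (compare the triangle case of Theorem~\ref{rigiditySphPolygon}). If $\deg(v)=4$, then $S_v$ is a possibly nonconvex and self-intersecting spherical quadrilateral whose four interior angles are given and non-singular, whose four sides are less than $\pi$, and of which --- since strong-rigidity forces $\tau(v)\le 3$ --- at least one side is known; this is exactly the hypothesis of the rigidity theorem for spherical quadrilaterals proved in Section~\ref{nonconSph} (Theorem~\ref{thm:sphquadRigid}), whence $S_v$ is again uniquely determined. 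When more than one side is known (the cases $\tau(v)<3$) the extra data is automatically consistent, $S_v$ being the vertex figure of a genuine realization.

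It then remains to reconstruct $\N_v$ from $S_v$. Once $S_v$ is pinned down, every facial angle at $v$ is known, so every newly-added length $|u_iu_{i+1}|$ is recovered from $|vu_i|$, $|vu_{i+1}|$ and the enclosed facial angle by the law of cosines; more conceptually, $S_v$ simultaneously prescribes the directions at $v$ of all edges $vu_i$ and the dihedral angles between consecutive faces, so $\N_v$ is just the geodesic cone over $S_v$ with apex $v$ and radii $|vu_i|$. Hence $\N_v$ is determined up to orientation-preserving isometry, which is the claim of the proposition (exactly as in Proposition~\ref{rigidvertexthm} for the strictly-convex case).

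The main obstacle is concentrated in the case $\deg(v)=4$: it relies entirely on Theorem~\ref{thm:sphquadRigid}, the rigidity of nonconvex, possibly self-intersecting, spherical quadrilaterals, which is the most delicate result of the paper and itself rests on developing spherical trigonometry for triangles with reflex angles and with sides near $\pi$ in Section~\ref{sphTrigonometry}. Conditions~(i) and~(ii) are precisely what is needed to make those generalized identities, and hence that quadrilateral rigidity theorem, applicable; everything else --- the triangle case and the cone reconstruction --- is routine once this machinery is in place.
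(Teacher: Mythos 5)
Your proposal is correct and follows essentially the same route as the paper: pass to the vertex figure $S_v$, handle $\deg(v)=3$ by the $AAA$ rigidity of spherical triangles with the two-fold ambiguity broken by the facial angles being less than $\pi$, and handle $\deg(v)=4$ by invoking Theorem~\ref{thm:sphquadRigid} (strong rigidity giving at least one known side). The additional remarks about computing known sides of $S_v$ from triangular faces and reconstructing $\N_v$ as a cone over $S_v$ are the same bookkeeping the paper carries out in Proposition~\ref{rigidvertexthm}.
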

 \begin{proof}
 In  3-valent cases, the vertex figure $S_v$ is a non-singular triangle with all interior angles given. According to Corollary~\ref{trianglerigidity}, there are two possible realizations of such a triangle, where the edge lengths are either $l$ or $2\pi-l$ (or either $l$ or $l+\pi$). However, since all facial angles are less than $\pi$, we can determine unique realization by fixing one choice.
 The 4-valent case is directly addressed by Theorem~\ref{thm:sphquadRigid}.
 \end{proof}
 
Since flat edges are not in the original $P$, we have the following simple observation: For any flat edge of every polyhedron during vertex reduction, one of the two adjacent faces must be a triangle, because it is  newly-added. Equivalently, for each singular angle in  spherical figures, one of the two adjacent edges must have a known length.
This is very simple but crucial point to be often used. So we  refer to this observation as  \emph{flat edge principle}.
 
Finally, by incorporating the previous propositions and flat edge principle, we obtain the following proposition, which completes the proof of the main theorem.
 \begin{prop}\label{rigidityvertexreduction}
 Let $P$ satisfy the three conditions of the main theorem and all dihedral angles and edge lengths be given. 
 If $v$ be a strongly-rigid vertex of the polyhedron $P_i$ in the reduction sequence, then $\N_v$ is uniquely determined. 
 \end{prop}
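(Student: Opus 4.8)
The plan is to analyse the vertex figure $S_v$, whose interior angles are the dihedral angles at $v$ (all known) and whose edge lengths are the facial angles at $v$ — of these, exactly $\tau(v)$ are a priori unknown, one for each non-triangular face at $v$, since the facial angle of a triangular face is recovered from its three edge lengths by the cosine rule. Before splitting into cases I would record two facts that persist along the whole reduction sequence. First, every triangular face occurring in any $P_i$ is strictly convex: the triangular faces of $P=P_0$ are strictly convex because Condition (iii) forbids collinear triples, and every newly-added face was, by the construction in Proposition~\ref{reductionSeq}, given edge lengths $<\pi$, hence is strictly convex as well. Second, every non-triangular face of $P_i$ is an unmodified non-triangular face of $P$ and is therefore strictly convex by Conditions (i) and (iii); in particular the corner triangles introduced in the local triangulation $P_v$ are strictly convex sub-triangles of strictly convex faces. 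Together these give that all facial angles of all faces adjacent to $v$ are $<\pi$ and that all triangular faces of $\N_v$ are convex. Note that $P_i$ itself need not satisfy Condition (ii) of Theorem~\ref{mainThm}, because flat edges can be created during reduction; coping with this is the only real difficulty.

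If $\deg(v)\ge 5$, then strong rigidity forces $\tau(v)\le 1$, so $S_v$ has at most one unknown edge length. Since all triangular faces of $\N_v$ are convex, Proposition~\ref{deg5SvRigid} applies directly and shows that $\N_v$ is uniquely determined, singular dihedral angles near $v$ notwithstanding.

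If $\deg(v)\in\{3,4\}$, then by the first paragraph all facial angles adjacent to $v$ are $<\pi$, so hypothesis (ii) of Proposition~\ref{deg34SvRigid} holds. When $v$ has no flat edge, hypothesis (i) holds too and Proposition~\ref{deg34SvRigid} finishes the proof, so assume there is a flat edge $e$ at $v$. Since $e$ is absent from $P_0$, it was introduced during an earlier reduction; by the flat edge principle one of the two faces of $P_i$ along $e$ is a triangle $T$, and because all edge lengths of $T$ are known its facial angle at $v$ is determined by the cosine rule. In terms of $S_v$ this says that the corner corresponding to $e$ carries the singular interior angle of $e$ and has at least one incident edge of known length. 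I would then collapse such a singular corner of $S_v$: when the angle is $\pi$, delete the corner and concatenate its two incident geodesic edges; when it is $0$, fold the two incident edges along a common geodesic, the fold being pinned down by the known incident edge through the generalized trigonometry of Section~\ref{sphTrigonometry} (Corollary~\ref{trianglerigidity}). Each collapse strictly lowers the number of vertices of $S_v$, keeps every interior angle known, and — thanks again to the flat edge principle applied at each remaining singular corner — keeps the relevant edge lengths controlled. Iterating reduces $S_v$ to a non-singular polygon with at most $\deg(v)\le 4$ vertices, which is uniquely determined by Corollary~\ref{trianglerigidity} if it is a triangle and by Theorem~\ref{thm:sphquadRigid} if it is a quadrilateral (the degenerate case of $\deg(v)=3$ with every incident edge flat is settled directly, since the flat edge principle then forces at least two of the three facial angles at $v$ to be known and the collinearity they impose pins down the third). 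Undoing the collapses reconstructs $S_v$, hence $\N_v$, uniquely.

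The main obstacle is exactly this flat-edge case in valences $3$ and $4$: there $S_v$ can genuinely carry several unknown edge lengths together with several singular corners, possibly adjacent to one another, so one must verify that the flat edge principle keeps supplying a known incident edge at every singular corner as the collapses proceed, and that the $0$-angle fold is performed consistently in all of Euclidean, hyperbolic and spherical geometry — which is precisely why the unrestricted trigonometry of Section~\ref{sphTrigonometry} and the nonconvex quadrilateral rigidity of Theorem~\ref{thm:sphquadRigid} are needed, and where the real work behind Propositions~\ref{deg5SvRigid} and~\ref{deg34SvRigid} is hidden.
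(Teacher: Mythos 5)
Your handling of the cases $\deg(v)\ge 5$ and of $3$- or $4$-valent vertices without flat edges follows the paper (Propositions~\ref{deg5SvRigid} and~\ref{deg34SvRigid}), and your use of the flat edge principle to supply a known edge of $S_v$ at each singular corner is also the paper's device. However, your flat-edge analysis has a genuine gap: you never invoke Condition~(ii) of Theorem~\ref{mainThm} (no partially-flat vertices), and the statement is false without it. The configuration your argument cannot handle is a $4$-valent vertex $v$ all four of whose edges are flat and whose adjacent faces alternate as (triangle, non-triangle, triangle, non-triangle). There $S_v$ is a spherical quadrilateral with all four interior angles singular and two unknown, non-adjacent edge lengths; your collapse procedure only merges a known edge with an unknown one into a longer unknown edge, so after the collapses you are left with a degenerate figure still carrying two independent unknown lengths, and nothing determines it. This is not a technicality: Example~\ref{ex:partially-flat-vertex} shows that uniqueness actually fails when such coplanar configurations are permitted, so any proof that does not use Condition~(ii) at this point cannot be correct.

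The paper closes exactly this case by a combinatorial-geometric exclusion: if every edge at $v$ is flat, then all faces adjacent to $v$ are coplanar; the two non-triangular faces in the alternating pattern cannot be newly-added (newly-added faces are always triangles), hence both are faces of the original $P$, and two coplanar original faces at $v$ would make $v$ partially-flat in $P$, contradicting Condition~(ii). You need to add this step. A secondary remark: in the remaining flat-edge subcases your collapsing of singular corners is workable but underspecified (the $0$-angle ``fold'' in particular); the paper instead classifies the singular vertex figures directly via Proposition~\ref{singular3-gon} and checks each angle pattern, which avoids having to track how unknown lengths propagate through successive collapses.
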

\begin{proof}
	The remaining case not covered by Proposition~\ref{deg34SvRigid} and Proposition~\ref{deg5SvRigid} is the situation where a 3- or 4-valent vertex has a flat edge.
First,	let us consider 3-valent case.
By the classification of singular triangle of Proposition~\ref{singular3-gon}, the only possibilities are $(A,B,C)=(\pi,\pi,\pi)$, $(0,0,\pi)$, and $(\theta,\theta,\pi)$. 
For the cases where $(A,B,C) = (\pi, \pi, \pi)$ or $(0,0,\pi)$, according to flat edge principle, at least two edge lengths among the three edge lengths of $S_v$ are determined, and consequently, $\N_v$ is also determined.
 For $(A,B,C)=(\theta,\theta,\pi)$, 
 by flat edge principle, at least one edge length adjacent to the singular angle $C$ must be determined  and consequently, $S_v$ is also determined. 
 In any case, any strongly-rigid 3-valent vertex during  vertex reduction must carry sufficient information to determine $S_v$.

Secondly, let us consider 4-valent cases. 
 By  flat edge principle similar to the 3-valent cases, at least one face adjacent to the flat edge must be a triangle, and the corresponding edge length of $S_v$ is also determined. If only one flat edge occurs, it is reduced to the triangle case without singular angles. 
 If exactly two flat edges occur, it is also reduced to the bigon case. These cases are handled by the same way as 3-valent cases. 
 The final case is when all edges are flat. 
In this case, by flat edge principle, the pattern of four adjacent faces must be either (triangle, triangle, triangle, non-triangle) or (triangle, non-triangle, triangle, non-triangle), as non-triangular faces cannot be consecutive.
The former case is easily resolved as only one facial angle remains unknown. The latter case presents the worst possible scenario. However, this scenario is precluded by Condition (ii), which prohibits any partially-flat  vertex. If all faces adjacent to the vertex  lie on the same plane, at most one of these faces can be a face of the original $P$. The worst scenario, however, would require two non-triangular faces which cannot be newly-added. Therefore, this worst-case scenario does not occur.
\end{proof}  
Note that the requirement of the partially-flat vertex condition is not necessary except in the final worst-case scenario. In fact, the occurrence of this situation can be avoided by carefully selecting a suitable reduction sequence in many practical examples even containing partially-flat vertices. It is, however,  important to note that the partially-flat vertex condition cannot be replaced by the flat-edge condition alone.
This is evident from the counterexample provided in Example~\ref{ex:partially-flat-vertex}, which demonstrates that even without flat edges, uniqueness can fail. It is worth noting that although the counterexample exists, it does not form a continuous family. Therefore, there is still a possibility that local rigidity holds true, as addressed in Question~\ref{ques}.

As we consider the proof of Theorem~\ref{mainThm}, it becomes apparent that if we prevent the presence of a newly-added 4-valent flat edge in the alternating pattern (triangle, non-triangle, triangle, non-triangle), we can establish the uniqueness result. As a straightforward application of this strategy, we obtain the proof of Theorem~\ref{thm2} as follows. 
\begin{proof}[Proof  of Theorem~\ref{thm2}] 
To create a 4-valent flat vertex in the alternating pattern during  vertex reduction, at least seven vertices of $P$ lying on the same plane is required in Figure~\ref{fig:7vertexthm}.
\end{proof}

\begin{figure}[H]
	\begin{center}	
		%% Creator: Inkscape 1.2.2 (732a01da63, 2022-12-09), www.inkscape.org
%% PDF/EPS/PS + LaTeX output extension by Johan Engelen, 2010
%% Accompanies image file '7vertexthm.pdf' (pdf, eps, ps)
%%
%% To include the image in your LaTeX document, write
%%   \input{<filename>.pdf_tex}
%%  instead of
%%   \includegraphics{<filename>.pdf}
%% To scale the image, write
%%   \def\svgwidth{<desired width>}
%%   \input{<filename>.pdf_tex}
%%  instead of
%%   \includegraphics[width=<desired width>]{<filename>.pdf}
%%
%% Images with a different path to the parent latex file can
%% be accessed with the `import' package (which may need to be
%% installed) using
%%   \usepackage{import}
%% in the preamble, and then including the image with
%%   \import{<path to file>}{<filename>.pdf_tex}
%% Alternatively, one can specify
%%   \graphicspath{{<path to file>/}}
%% 
%% For more information, please see info/svg-inkscape on CTAN:
%%   http://tug.ctan.org/tex-archive/info/svg-inkscape
%%
\begingroup%
  \makeatletter%
  \providecommand\color[2][]{%
    \errmessage{(Inkscape) Color is used for the text in Inkscape, but the package 'color.sty' is not loaded}%
    \renewcommand\color[2][]{}%
  }%
  \providecommand\transparent[1]{%
    \errmessage{(Inkscape) Transparency is used (non-zero) for the text in Inkscape, but the package 'transparent.sty' is not loaded}%
    \renewcommand\transparent[1]{}%
  }%
  \providecommand\rotatebox[2]{#2}%
  \newcommand*\fsize{\dimexpr\f@size pt\relax}%
  \newcommand*\lineheight[1]{\fontsize{\fsize}{#1\fsize}\selectfont}%
  \ifx\svgwidth\undefined%
    \setlength{\unitlength}{105.47010998bp}%
    \ifx\svgscale\undefined%
      \relax%
    \else%
      \setlength{\unitlength}{\unitlength * \real{\svgscale}}%
    \fi%
  \else%
    \setlength{\unitlength}{\svgwidth}%
  \fi%
  \global\let\svgwidth\undefined%
  \global\let\svgscale\undefined%
  \makeatother%
  \begin{picture}(1,0.47409265)%
    \lineheight{1}%
    \setlength\tabcolsep{0pt}%
    \put(0,0){\includegraphics[width=\unitlength,page=1]{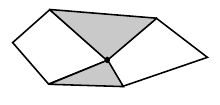}}%
    \put(0.51661791,0.17937906){\color[rgb]{0,0,0}\makebox(0,0)[lt]{\lineheight{1.25}\smash{\begin{tabular}[t]{l}$v$\end{tabular}}}}%
    \put(0.186904,0.45199372){\color[rgb]{0,0,0}\makebox(0,0)[lt]{\lineheight{1.25}\smash{\begin{tabular}[t]{l}$1$\end{tabular}}}}%
    \put(-0.00079848,0.28985407){\color[rgb]{0,0,0}\makebox(0,0)[lt]{\lineheight{1.25}\smash{\begin{tabular}[t]{l}$2$\end{tabular}}}}%
    \put(0.15590306,0.03852842){\color[rgb]{0,0,0}\makebox(0,0)[lt]{\lineheight{1.25}\smash{\begin{tabular}[t]{l}$3$\end{tabular}}}}%
    \put(0.55071276,0.00589972){\color[rgb]{0,0,0}\makebox(0,0)[lt]{\lineheight{1.25}\smash{\begin{tabular}[t]{l}$4$\end{tabular}}}}%
    \put(0.95313584,0.20401406){\color[rgb]{0,0,0}\makebox(0,0)[lt]{\lineheight{1.25}\smash{\begin{tabular}[t]{l}$5$\end{tabular}}}}%
    \put(0.7345222,0.38230265){\color[rgb]{0,0,0}\makebox(0,0)[lt]{\lineheight{1.25}\smash{\begin{tabular}[t]{l}$6$\end{tabular}}}}%
  \end{picture}%
\endgroup%

	\end{center}
	\caption{
	To obtain a flat vertex $v$ with an alternating pattern during vertex reductio, at least six vertices in $\bdN_v$ must lie in the same plane as the vertex $v$. The shaded regions represent the newly-added faces.}
		\label{fig:7vertexthm}
\end{figure}

\section{Rigidity for spherical polygons}\label{nonconSph}
In order to study the rigidity of spherical polygons induced from the vertex figures of nonconvex polyhedra, we need to formulate a generalized form of trigonometry. This should account for triangles that may self-intersect or have interior angles or edge lengths exceeding $\pi$.
We refer to this as \emph{generalized trigonometry} and  provide a proof for the convenience of the reader. Although there is a rather older reference \cite{klein_ubersicht_1933}, most modern texts and references only prove this under the assumption of strictly convex triangles ($0<\theta<\pi$). While more recent sources covering this generalized case may exist, we have been unable to find them.

\subsection{Generalized trigonometry
} \label{sphTrigonometry} 

Let us consider a spherical triangle with three edge lengths $a, b, c$ and the opposite interior angles $A$, $B$, $C$, respectively. The classical spherical trigonometry formulas are as follows, and they are well-known under the assumption that both edge lengths and angles are between 0 and $\pi$.
\begin{align}
	\sin a \sin B &=\sin A \sin b,& \text{(Sine rule)} \nonumber\\
	\sin b \sin C &=\sin B \sin c,&  \nonumber\\
	\sin c \sin A &=\sin C \sin a & \nonumber \\
	\cos a &= \cos b \cos c + \sin b \sin c \cos A   &\text{(Cosine rule)}\nonumber\\
	%	 \label{eq:trigonometry}\\
	\cos b &= \cos c \cos a + \sin c \sin a \cos B   &\nonumber\\
	\cos c &= \cos a \cos b + \sin a \sin b \cos C   &\nonumber\\
	\cos A &= -\cos B \cos C + \sin B \sin C \cos a 
	&\text{(dual Cosine rule)} \nonumber\\
	\cos B &= -\cos C \cos A + \sin C \sin A \cos b &\nonumber\\
	\cos C &= -\cos A \cos B + \sin A \sin B \cos c &\nonumber
\end{align}

\begin{prop}\label{sinecosine}  
	Spherical trigonometry, which consists of the Sine rules, Cosine rules, and dual Cosine rules, holds true even for spherical triangles that may self-intersect or have interior angles or edge lengths within the range of $[0,2\pi]$. 
\end{prop}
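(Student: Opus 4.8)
The plan is to reduce \emph{Proposition~\ref{sinecosine}} to the classical statement (all six data strictly between $0$ and $\pi$) in two stages. First note that the nine identities are algebraic relations among $\cos a,\sin a,\dots,\cos C,\sin C$, so that, $\sin$ and $\cos$ being $2\pi$-periodic, we may treat the data as elements of $\RR/2\pi\RR$. Stage one disposes of the degenerate configurations by continuity: both sides of each of the nine identities are continuous in the six data, and every generalized triangle with some datum in $\{0,\pi\}$ is a limit of non-degenerate ones, so it suffices to prove the identities when no edge length and no interior angle equals $0$ or $\pi$ (the degenerate triangles themselves being classified by hand in Proposition~\ref{singular3-gon}).

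Stage two is a reduction of the combinatorial sign data of a non-degenerate generalized triangle: each edge is a \emph{minor} arc (length $<\pi$) or a \emph{major} arc (length $>\pi$) of the great circle through its endpoints, and the chosen side decides at each vertex whether the interior angle lies in $(0,\pi)$ or in $(\pi,2\pi)$. I would prove that the \emph{edge complementations}
$$ T_a:\ (a,B,C)\ \longmapsto\ (2\pi-a,\ \pi+B,\ \pi+C), \qquad b,c,A\ \text{fixed}, $$
and cyclically $T_b,T_c$, each (1) carry a generalized spherical triangle to a generalized spherical triangle --- geometrically $T_a$ merely replaces the edge opposite $A$ by the complementary arc of the same great circle, which reverses that edge's tangent direction at both endpoints and hence shifts the two adjacent interior angles by $\pi$ --- and (2) preserve all nine identities and non-degeneracy, a one-line verification from $\cos(2\pi-a)=\cos a$, $\sin(2\pi-a)=-\sin a$, $\cos(\pi+B)=-\cos B$, $\sin(\pi+B)=-\sin B$. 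Applying suitable $T$'s, we may then assume all three edges are minor arcs, so that $a,b,c\in(0,\pi)$ are the genuine spherical distances of the vertices.

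For such a triangle let $\mathbf v_1,\mathbf v_2,\mathbf v_3\in S^2\subset\RR^3$ be the vertex unit vectors, so that $\cos a=\mathbf v_2\!\cdot\!\mathbf v_3$, $\sin a=|\mathbf v_2\times\mathbf v_3|>0$, and so on. Expanding $(\mathbf v_1\times\mathbf v_2)\!\cdot\!(\mathbf v_1\times\mathbf v_3)$ via the Gram matrix $(\mathbf v_i\!\cdot\!\mathbf v_j)$ gives $\cos\beta_1=(\cos a-\cos b\cos c)/(\sin b\sin c)$, where $\beta_1\in[0,\pi]$ is the \emph{unsigned} angle between the two edge-tangents at $\mathbf v_1$; since the interior angle $A$ equals $\beta_1$ or $2\pi-\beta_1$ according to the chosen side, $\cos A=\cos\beta_1$ in either case, and this rearranges to the Cosine rule. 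The dual Cosine rule follows the same way after passing to the polar triangle (vertices $\propto\mathbf v_j\times\mathbf v_k$), and the Sine rule is then obtained by eliminating $\cos A,\cos B,\cos C$ between the two families of cosine relations, once the sign of each of $\sin A,\sin B,\sin C$ has been pinned down --- here one uses that the triangle carries an orientation (it is a polygon on the oriented $2$-sphere), which forces the three side-choices to be mutually compatible. Conceptually all of this is the single identity asserting that the product, around the loop of the triangle, of the ``edge rotations'' (rotation by the edge length about the edge's pole) and the ``vertex rotations'' (rotation by $\pi$ minus the interior angle about the vertex) equals the identity of $SO(3)$: reading off the matrix entries of that identity produces the nine formulas, and --- this is the crux --- rotations through angles of arbitrary size are perfectly well defined, so the identity, and hence each formula, is insensitive to whether the lengths and angles lie in $[0,\pi]$ or in $[0,2\pi]$.

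\emph{The main obstacle} is precisely this sign and orientation bookkeeping: showing that a possibly-reflex interior angle and a possibly-major edge feed into the trigonometric quantities with exactly the signs that yield the \emph{stated} identities rather than sign-twisted variants, uniformly over all combinatorial types of (possibly self-intersecting) generalized triangles --- together with the routine but fiddly handling of the degenerate locus where great circles and tangent lines cease to be unique.
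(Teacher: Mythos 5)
Your proposal is correct and follows essentially the same route as the paper: reduce to the classical strictly-convex case by complementing edges one at a time, check that each complementation (together with the global side-switch $(A,B,C)\mapsto(2\pi-A,2\pi-B,2\pi-C)$) preserves all nine identities via the elementary sign rules for $\sin$ and $\cos$, and dispose of the singular/collinear configurations by continuity. The only cosmetic differences are that your substitution $T_a:(a;B,C)\mapsto(2\pi-a;\pi+B,\pi+C)$ is the paper's $(a;A,B,C)\mapsto(2\pi-a;2\pi-A,\pi-B,\pi-C)$ composed with that side-switch (both preserve the formulas, corresponding to the two choices of interior), and that you additionally re-derive the base case from Gram-matrix identities and the polar triangle, which the paper simply takes as known.
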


\begin{proof}
	
	Firstly, let us consider three points in non-collinear position in $S^2$. Recall that there are exactly two choices for the geodesic segment connecting two given points: one with  length less than $\pi$ and the other with  length greater than $\pi$.
	If they have the same length, the two points are antipodal and collinear together with   the third point, we will deal with this collinear situation at the end of  the proof.
	
	Therefore,	when forming a triangle with edge lengths less than $\pi$ without self-intersection, there are only two choices of interior angles as in Figure \ref{fig:smalltriangle}.
Let the tuple $(a,b,c;A,B,C)$ denote the lengths of the three edges $(a, b, c)$ and the three interior angles $(A, B, C)$ of a triangle. Both cases share the same edge lengths $a$, $b$, $c$, but the interior angles change from $A$, $B$, $C$ to $2\pi-A$, $2\pi-B$, $2\pi-C$ in the second case. We can easily verify that the trigonometric formulas for the $(a,b,c;2\pi-A,2\pi-B,2\pi-C)$ case are deduced from the strictly-convex case of $(a,b,c;A,B,C)$.
		\begin{figure}[h!]
		{
			\begin{align*}
				\vcenter{\hbox{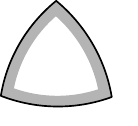} } ~~~~&&~ 
				\vcenter{\hbox{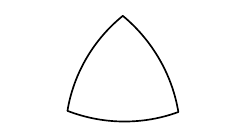} } 
			\end{align*}
			\caption{Two triangles with the same edge lengths less than $\pi$, but with different corresponding interior angles}
			\label{fig:smalltriangle}
		}
	\end{figure}
	
Secondly, we proceed to examine cases where triangles are not strictly-convex. 
Given that there are two geodesic segments connecting the points for each pair of points, we have  $2\times 2\times 2=8$ possible configurations  of triangles based on selection of the longer and shorter geodesic segments.
	Accounting for permutations in selection of the `longer' or `shorter' geodesics, we identify four essentially distinct triangles, as described in Figure \ref{fig:largeangledcases}.
	\begin{figure}[h!]
		{
			\begin{align*}
				\vcenter{\hbox{\includegraphics[scale=0.7]{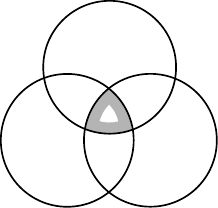}} } ~~~~&&~ 
				\vcenter{\hbox{\includegraphics[scale=0.7]{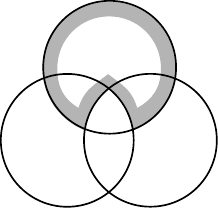}} } \\
				1 \times \text{(short,short,short)}  &&
				3 \times \text{(long,short,short)} \\
				\vcenter{\hbox{\includegraphics[scale=0.7]{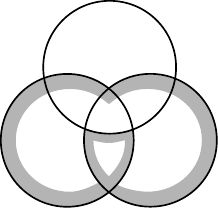}} } ~~~&&~~~
				\vcenter{\hbox{\includegraphics[scale=0.7]{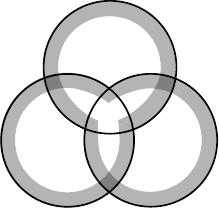}} } \\
				3 \times \text{(long,long,short)} &&
				1 \times \text{(long,long,long)}
			\end{align*}
		}	
		\caption{There are essentially four types of  triangles.}\label{fig:largeangledcases}
	\end{figure}
	We already know spherical trigonometry holds for the strictly-convex triangle (short, short, short).
	Then, as shown in Figure~\ref{fig:largeangledcases}, we can see the other three cases as follows: 
	
	$$\aligned
	(a,b,c;A,B,C) &\rightarrow(2\pi-a,b,c;2\pi-A,\pi-B,\pi-C), \\
	&\rightarrow(a,2\pi-b,2\pi-c;2\pi-A,\pi-B,\pi-C),\\
	&\rightarrow(2\pi-a,2\pi-b,2\pi-c;2\pi-A,2\pi-B,2\pi-C).
	\endaligned$$
	Based on this rule change, we can deduce the trigonometry formulas for all possible configurations of non-singular triangles from the usual trigonometry formulas.
	
Even for the case where the three vertices are in collinear position, the formulas of generalized trigonometry are also satisfied. This is easily established by applying the limit and continuity argument from the case of general position.
\end{proof}

\subsection{Rigidity for spherical triangles}\label{sec:trianglerigidity}
We only consider  spherical triangles with $$ 0 < a, b, c < 2\pi \text{ and } 0\leq  A, B, C \leq 2\pi.$$  
Let a triangle be \emph{singular} if there is a singular interior angle. Note that if an edge length is $\pi$ then  the triangle must be singular by the Sine rule. 
If we consider spherical triangles with  $0<a, b, c, A, B, C <\pi$, then 
$SSS$, $ASA$, $SAS$ and $AAA$ congruence conditions guarantee  uniqueness, just as they do for hyperbolic  or Euclidean triangles (excluding the $AAA$ condition for Euclidean triangles).
However, if one allows interior angles or edge lengths greater than $\pi$, the uniqueness ceases to hold.
Let us begin with the following  existence and uniqueness result.

\begin{prop}\label{triexistence}
	Consider  a  tuple $(a,b,c;A,B,C)$ of real numbers in $(0,\pi)\cup(\pi,2\pi)$ satisfying the generalized trigonometry as described in Proposition~\ref{sinecosine}. Then, there is a unique triangle that realizes these lengths and angles.
\end{prop}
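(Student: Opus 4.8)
The plan is to reduce the statement to the classical strictly-convex case supplemented by a sign-bookkeeping argument. First I would extract the purely algebraic content of the rules: since $A,B,C\in(0,\pi)\cup(\pi,2\pi)$ we have $\cos A,\cos B,\cos C\in(-1,1)$ and $\sin A,\sin B,\sin C\neq 0$, and from the Cosine rule $\cos a=\cos b\cos c+\sin b\sin c\cos A$ (and its two companions) the triple $(\cos a,\cos b,\cos c)$ determines $(\cos A,\cos B,\cos C)$. I would then form the candidate vertex Gram matrix
$$
G=\begin{pmatrix}1&\cos c&\cos b\\ \cos c&1&\cos a\\ \cos b&\cos a&1\end{pmatrix},
$$
observe that its upper-left $2\times2$ minor equals $\sin^2 c>0$, and compute, using the Cosine rule, $\det G=\sin^2 b\,\sin^2 c-(\cos a-\cos b\cos c)^2=\sin^2 b\,\sin^2 c\,\sin^2 A>0$. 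By Sylvester's criterion $G$ is positive definite, so there are unit vectors $u_1,u_2,u_3\in S^2$, unique up to $O(3)$, with $\langle u_1,u_2\rangle=\cos c$, $\langle u_2,u_3\rangle=\cos a$, $\langle u_3,u_1\rangle=\cos b$; since $G$ is nonsingular these are in general position (in particular no two are antipodal). Note that $\det G>0$ already rules out every degenerate configuration, so, in contrast with the proof of Proposition~\ref{sinecosine}, no limiting argument will be needed.

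Next I would build the triangle. Joining each pair $u_i,u_j$ by the unique geodesic arc whose length equals the prescribed value — legitimate because, for distinct non-antipodal points, exactly one of the two joining arcs has length in $(0,\pi)$ and the other in $(\pi,2\pi)$ — yields a spherical $3$-gon $T$ with vertices $u_1,u_2,u_3$ and edge lengths exactly $a,b,c$. Its two cyclic orientations give interior-angle tuples $(\theta_1,\theta_2,\theta_3)$ and $(2\pi-\theta_1,2\pi-\theta_2,2\pi-\theta_3)$, and by Proposition~\ref{sinecosine} both satisfy generalized trigonometry. The key lemma I would isolate is: \emph{for fixed $(a,b,c)$ with entries in $(0,\pi)\cup(\pi,2\pi)$, any two angle-tuples satisfying generalized trigonometry differ by replacing every entry $\theta$ with $2\pi-\theta$ simultaneously.} This follows because the Cosine rules force each $\theta_i\in\{A_i,2\pi-A_i\}$ while the Sine rules force the common ratios $\sin\theta_i/\sin(\text{opposite edge})$ to share a single sign, which excludes flipping only one or two of the three entries (here one uses crucially that no $\sin A_i$ vanishes). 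Applying this lemma, $(A,B,C)$ is either $(\theta_1,\theta_2,\theta_3)$ or its simultaneous flip, so one of the two orientations of $T$ realizes the given tuple; this establishes existence.

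For uniqueness, suppose $T$ and $\tilde T$ both realize $(a,b,c;A,B,C)$. Since $\cos(2\pi-x)=\cos x$, the pairwise inner products of the vertices of $\tilde T$ coincide with those of $T$, so both triangles have the same vertex Gram matrix $G$, and an isometry of $S^2$ carries the vertices of $\tilde T$ onto those of $T$; after applying it we may assume the vertex sets coincide. Each prescribed edge length in $(0,\pi)\cup(\pi,2\pi)$ then selects the same one of the two joining arcs in $T$ and in $\tilde T$, so the two $3$-gons consist of the same three arcs and can differ only by cyclic orientation. Reversing the orientation would replace $(A,B,C)$ by $(2\pi-A,2\pi-B,2\pi-C)$, which is a genuinely different tuple because none of $A,B,C$ equals $\pi$; hence the orientation, and therefore the triangle, is determined.

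I expect the one genuinely delicate point to be the angle-matching step in the existence part, i.e.\ the key lemma above. Once the three vertices and the three arcs are in hand there is, a priori, an independent sign ambiguity $A_i\leftrightarrow 2\pi-A_i$ at each of the three vertices, and the entire content of the argument is that the Sine rules couple these signs so that only the globally flipped alternative can occur; this is precisely why the Cosine rule alone does not suffice and why Proposition~\ref{sinecosine} must be used with its full strength.
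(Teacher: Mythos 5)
Your proof is correct, but it takes a genuinely different route from the paper's. The paper's argument is synthetic: it constructs the angle $A$ at a vertex, lays off geodesic rays of lengths $b$ and $c$, and observes that the two endpoints are joined by two great-circle arcs of distinct lengths (by non-singularity), exactly one of which has length $a$ because the tuple satisfies the Cosine rule; uniqueness is then immediate since every step of that construction has a unique outcome, and the remaining angles $B$, $C$ are asserted to match because they are forced by the trigonometric relations. You instead build the three vertices from the Gram matrix, using the Cosine rule to get $\det G=\sin^2 b\,\sin^2 c\,\sin^2 A>0$, and then resolve the residual three-fold ambiguity in the angles by a ``simultaneous flip'' lemma driven by the Sine rule. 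What your route buys is that the one step the paper leaves implicit --- why the constructed triangle carries interior angles exactly $B$ and $C$ at the other two vertices rather than their explements --- is made fully explicit; your flip lemma is precisely the sign-coupling argument that the paper only supplies later, inside the proof of Corollary~\ref{trianglerigidity}, so you have in effect folded that corollary into the existence proof. What it costs is some redundancy with that corollary, and a slightly longer uniqueness discussion (the paper gets uniqueness for free from the determinacy of each construction step, whereas you must separately match vertices, arcs, and orientation). Both arguments use the hypothesis that all six data lie in $(0,\pi)\cup(\pi,2\pi)$ in the same two places: to ensure the two candidate arcs and the two candidate angles at each site are distinct, and to keep every sine nonzero so that the Sine rule can couple the signs.
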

\begin{proof}
Construct an angle $A$ and extend the adjacent edges $b$ and $c$ with given lengths. It is important to note that this construction is possible without any constraints such as the triangle inequality. On the sphere, there are two choices of edges $x$ and $y$ that connect the endpoints of $b$ and $c$ along the great circle. Due to the non-singular condition, the lengths of $x$ and $y$ are not the same. This allows us to choose one of them as the edge $a$. Thus, a unique triangle is formed. All the trigonometric formulas must be satisfied, and the given values of $a$, $B$, $C$ must match those  depicted in the drawing.
\end{proof}
Thus, there exists an exact correspondence between algebraic solutions to the equations of trigonometry and (potentially nonconvex or self-intersecting) spherical triangles. This relationship yields the following corollary:

\begin{cor}\label{trianglerigidity}
Consider non-singular spherical triangles.
Given $SSS$, $SAS$, $ASA$, and $AAA$ conditions, exactly two distinct triangles exist up to congruence. In particular, if one realization has an edge length or an interior angle of $\theta$, then the corresponding edge length or interior angle of the other realization is either $2\pi-\theta$ or $\theta+\pi$.
\end{cor}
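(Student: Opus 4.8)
The plan is to reduce the whole statement to a count of algebraic solutions of the generalized trigonometric identities, exploiting the bijection between valid tuples $(a,b,c;A,B,C)$ and (possibly nonconvex or self-intersecting) spherical triangles supplied by Proposition~\ref{triexistence}. The engine is a \emph{substitution lemma}: if $(a,b,c;A,B,C)$ satisfies all nine identities of Proposition~\ref{sinecosine}, then so do $(a,b,c;2\pi-A,2\pi-B,2\pi-C)$ and $(2\pi-a,2\pi-b,2\pi-c;A,B,C)$ (and hence the double flip as well). One checks this by substituting into each Sine, Cosine, and dual Cosine rule and using $\cos(2\pi-x)=\cos x$, $\sin(2\pi-x)=-\sin x$: every identity is either unchanged or picks up matching sign changes on the two sides. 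By Proposition~\ref{triexistence} each such tuple is realized by a genuine triangle, two tuples give congruent triangles iff they are equal, and non-singularity (so $A\neq 2\pi-A$, etc.) guarantees the new triangle is distinct from the old one.

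\emph{Existence of a second realization.} For $SSS$ (sides $a,b,c$ prescribed) the substitution lemma yields $(a,b,c;2\pi-A,2\pi-B,2\pi-C)$, with the same sides; dually, for $AAA$ it yields $(2\pi-a,2\pi-b,2\pi-c;A,B,C)$, with the same angles. For $SAS$ (say $b,c,A$ prescribed) and $ASA$ (say $a,B,C$ prescribed), the second realization comes directly from the construction in Proposition~\ref{triexistence}: after laying down the prescribed angle with its two adjacent sides (resp.\ the prescribed side with its two adjacent angles), the two remaining vertices (resp.\ the last vertex) can be joined along the relevant great circle in exactly two ways, which by non-singularity have different lengths, and both choices produce a triangle with the prescribed data.

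\emph{At most two realizations.} For $SSS$ the three Cosine rules determine $\cos A,\cos B,\cos C$ from $a,b,c$, and in $(0,\pi)\cup(\pi,2\pi)$ this pins each angle down to two values $\theta_0$ and $2\pi-\theta_0$, of opposite sine sign; writing the Sine rule in ratio form $\sin A/\sin a=\sin B/\sin b=\sin C/\sin c$ (all edge sines nonzero, since a $\pi$-edge would force singularity), the common ratio has a single sign $\epsilon\in\{\pm1\}$, so $\operatorname{sign}(\sin A)=\epsilon\operatorname{sign}(\sin a)$ and likewise for $B,C$: the three sign choices are coupled, leaving exactly the two tuples already found. Dually for $AAA$, using the dual Cosine rules to fix $\cos a,\cos b,\cos c$ and the Sine rule to couple the signs. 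For $SAS$ with $b,c,A$ fixed, the Cosine rule $\cos a=\cos b\cos c+\sin b\sin c\cos A$ fixes $\cos a$, hence gives two values of $a$, and once $a,b,c,A$ are all fixed the Sine rule fixes $\sin B,\sin C$ and the Cosine rules fix $\cos B,\cos C$, so $B,C$ are determined. For $ASA$ with $a,B,C$ fixed, the dual Cosine rule $\cos A=-\cos B\cos C+\sin B\sin C\cos a$ fixes $\cos A$, hence two values of $A$, which then determine $b,c$. In each case there are exactly two valid tuples.

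\emph{Relation between the two realizations, and the main obstacle.} For $SSS$ the angles satisfy $\theta'=2\pi-\theta$, and for $AAA$ the sides satisfy $\theta'=2\pi-\theta$. For $SAS$ with $b,c,A$ fixed, the opposite side flips as $a'=2\pi-a$; from $\cos B=(\cos b-\cos c\cos a)/(\sin c\sin a)$ the replacement $a\mapsto 2\pi-a$ gives $\cos B'=-\cos B$, and since the common ratio of the Sine rule changes sign with $\sin a$ one also has $\sin B'=-\sin B$, so $B'=B\pm\pi$, and similarly $C'=C\pm\pi$; the $ASA$ case is dual, with $A'=2\pi-A$ and $b'=b\pm\pi$, $c'=c\pm\pi$. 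Thus every quantity not held fixed by the congruence condition differs between the two realizations by $2\pi-\theta$ or by $\theta+\pi$ (equivalently $\theta-\pi$), as claimed. The computations are routine; the only points needing care are verifying that the substitutions preserve all nine identities rather than just one Cosine rule, and the bookkeeping of sine signs when passing between the two solutions — that sign analysis is precisely what separates the $2\pi-\theta$ behaviour of the distinguished quantity from the $\theta+\pi$ behaviour of the remaining two in the $SAS$ and $ASA$ cases — while distinctness of the two solutions is immediate from non-singularity.
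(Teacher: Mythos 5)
Your proposal is correct and follows essentially the same route as the paper: use the (dual) Cosine rules to pin each unknown down to $\theta$ or $2\pi-\theta$, use the Sine rule to couple the sign choices so that exactly two tuples solve the full system, and invoke Proposition~\ref{triexistence} to convert these into exactly two triangles. Your explicit substitution lemma and the sign bookkeeping for the $SAS$/$ASA$ cases merely spell out details the paper leaves implicit (cf.\ its worked $ASA$ example yielding $(\pi+a,b,\pi+c;A,2\pi-B,C)$).
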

\begin{proof}
Suppose that three values of either $SSS$, $SAS$, $ASA$, and $AAA$ are given. There are three remaining angles or lengths not given. The cosine values $\cos \theta$ of these three values are consecutively determined by the Cosine rule or  dual Cosine rule, so they must be either $\theta$ or $2\pi-\theta$. After choosing any one of these values, the remaining choices are uniquely determined as $2\pi-\theta$ or $\pi+\theta$ by the Sine rule. The pair of tuples obtained in this way is the exact solution to the system of equations from the trigonometry. Due to Proposition~\ref{triexistence} along with the non-singular condition, exactly two distinct triangles are obtained. In particular, the pair of the corresponding edge length and interior angle must be either $(\theta, 2\pi-\theta)$ or $(\theta, \pi+\theta)$.
\end{proof}
For example, in $ASA$ condition, $A$, $b$, and $C$ are given. The dual Cosine rule determines $\cos B$, and $\cos a$ and $\cos c$ are determined by the dual Cosine rule again. If one chooses either $B$ or $2\pi - B$, then the other values of $a$ and $c$ are determined from the two options, respectively, using the Sine Rule. 
Finally, we have two distinct tuples $(a, b, c; A, B, C)$ and $(\pi + a, b, \pi + c; A, 2\pi - B, C)$ which are the solution to  generalized trigonometric formula. These are  exactly two distinct triangles by Corollary~\ref{trianglerigidity}.

For singular triangles, we have the following classification. 

\begin{prop}\label{singular3-gon}
If a spherical triangle has an angle or an edge length equal to  $0$, $2\pi$ or $\pi$, then the only possible angle combinations are $(\pi, \pi, \pi)$, $(0, 0, \pi)$, $(2\pi,2\pi,\pi)$ and $(\theta, \theta, \pi)$. 
\end{prop}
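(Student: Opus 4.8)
The plan is to let the Sine rule locate and propagate the singularity, let the dual Cosine rule pin down the surviving angle triples, and reduce the proposition to a short enumeration; throughout I use the generalized trigonometry of Proposition~\ref{sinecosine} without further comment.

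First I dispose of the edge-length case: by the remark preceding the statement, an edge of length $\pi$ already forces a singular interior angle via the Sine rule, so after relabelling I may assume $C\in\{0,\pi,2\pi\}$, hence $\sin C=0$. The Sine-rule identities $\sin b\sin C=\sin B\sin c$ and $\sin c\sin A=\sin C\sin a$ then collapse to $\sin c\sin A=\sin c\sin B=0$, and the argument splits according to whether the edge $c$ opposite the singular angle equals $\pi$.

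If $\sin c\neq0$ then $\sin A=\sin B=0$, so $A,B,C\in\{0,\pi,2\pi\}$; the dual Cosine rule reduces to $\cos C=-\cos A\cos B$ and its two cyclic companions, and multiplying the three gives $(\cos A\cos B\cos C)^{2}=-\cos A\cos B\cos C$, forcing $\cos A\cos B\cos C=-1$ since each factor is $\pm1$. Hence an odd number of the three angles equal $\pi$: either all three, giving $(\pi,\pi,\pi)$; or exactly one, say $C$, in which case $\cos A=-\cos B\cos C=\cos B$ with $A,B\neq\pi$, so $A,B\in\{0,2\pi\}$ and the triple is $(0,0,\pi)$ or $(2\pi,2\pi,\pi)$ --- the mixed possibility $(0,2\pi,\pi)$ being indistinguishable from these, since the trigonometric formulas cannot tell $0$ from $2\pi$. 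If instead $c=\pi$, then $\cos c=-1$ and the dual Cosine rule for $C$ becomes
\[
  \cos C=-\cos A\cos B+\sin A\sin B\cos c=-\cos(A-B),
\]
so for $C=\pi$ one gets $\cos(A-B)=1$, hence $A=B$ and the triple is $(\theta,\theta,\pi)$ (with $\theta=\pi,0,2\pi$ recovering the three earlier triples), while for $C\in\{0,2\pi\}$ one gets $\cos(A-B)=-1$, so $A$ and $B$ differ by $\pi$.

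The main obstacle is precisely this last subcase, $c=\pi$ with $C\in\{0,2\pi\}$: here the trigonometric identities alone admit a one-parameter family of tuples of the shape $(\alpha,\alpha+\pi,0)$, so an algebraic argument cannot finish. To rule these out I would argue geometrically --- a singular angle at $C$ forces the two edges $CA,CB$ to issue from $C$ in a common direction along one great circle $\Gamma$, so $A,C,B$ are collinear, and since $c=\pi$ the endpoints $A,B$ are antipodal; a direct inspection of the possible positions of the third edge on $\Gamma$ then shows the angles at $A$ and $B$ are themselves in $\{0,\pi,2\pi\}$, collapsing the triple to one already on the list, while the stray configurations in which the third edge abandons $\Gamma$ are not limits of genuine non-singular triangles and hence do not occur. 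Modulo the $0\sim 2\pi$ identification forced by the formulas, this exhausts the cases and yields the claimed classification.
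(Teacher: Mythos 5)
Your overall strategy is exactly what the paper prescribes --- its entire proof is the sentence ``by applying the generalized trigonometry \dots\ we straightforwardly check all the possibilities'' --- and your algebraic case analysis (the Sine rule forcing all angles singular when no edge equals $\pi$, the dual Cosine rule giving $\cos A\cos B\cos C=-1$ there, and the identity $\cos C=-\cos(A-B)$ when $c=\pi$) is correct and considerably more explicit than anything in the paper. Two points, however, are not closed. The minor one: since the paper lists $(0,0,\pi)$ and $(2\pi,2\pi,\pi)$ as \emph{distinct} combinations, it does distinguish $0$ from $2\pi$ as interior angles; saying that the mixed triple $(0,2\pi,\pi)$ is ``indistinguishable'' from these because the formulas cannot tell $0$ from $2\pi$ only shows the algebra cannot decide between them, not that the mixed triple is geometrically impossible, so that case is asserted rather than settled.

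The substantive gap is in the subcase you flag yourself: $c=\pi$ with $C\in\{0,2\pi\}$. Your exclusion of the family $(\alpha,\alpha+\pi,0)$ rests on the claim that configurations in which the edge $AB$ leaves the great circle $\Gamma$ ``are not limits of genuine non-singular triangles and hence do not occur.'' Neither half of this is justified. The paper's definition of a spherical polygon (a continuous image of a combinatorial polygon with each edge an embedded geodesic, self-intersection allowed) is not restricted to limits of non-singular triangles, so even if true the claim would not exclude these configurations. Worse, the claim appears false: put $C=(1,0,0)$ and $A=(\cos b,\sin b,0)$ on the equator $\Gamma$, let $B_\varepsilon$ approach the antipode $-A$ transversally to $\Gamma$, and join $C$ to $B_\varepsilon$ by the long geodesic; then the angle at $C$ tends to $0$, the length of $AB_\varepsilon$ tends to $\pi$, and the limiting edge $AB$ lies on whichever great circle through $A$ and $-A$ is selected by the direction of approach. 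So these ``stray'' triangles are limits of genuine non-singular ones, and one checks directly that the tuples $(b+\pi,b,\pi;\alpha,\alpha+\pi,0)$ satisfy every identity of the generalized trigonometry. They therefore cannot be dismissed by the argument you give, and this case remains open in your write-up. (In fairness, the paper's one-line proof does not address it either; in every application of the proposition the edge lengths of the vertex figure are facial angles of convex faces, hence lie in $(0,\pi)$, so $c=\pi$ never actually occurs there. But as a proof of the proposition as stated, your argument does not close.)
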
	
\begin{proof}
	By applying the generalized trigonometry of Proposition~\ref{sinecosine}, we  straightforwardly check all the possibilities. Note that we don't allow the lengths to be zero by definition. 
\end{proof}

\subsection{Nonrigid spherical polygons}

Let's consider examples in which the nonconvex spherical polygons, induced from the vertex figure $S_v$, are not uniquely determined by the constraints that guaranteed the rigidity of strictly-convex polygons
For example,  Theorem~\ref{rigiditySphPolygon} is no longer valid for nonconvex polygons. The following  is an example of a non-rigid spherical  hexagon with all interior angles given and $n-2$ edge lengths provided. 
Counterexamples of this kind can be easily constructed for an $n$-gon ($n \geq 5$) using similar methods.
\begin{exam}\label{ex:deg6flex}
	Consider   two spherical polygons of six edges as depicted in  Figure~\ref{fig:vertexshape_counterexample}. Each pair of corresponding interior angles in these polygons is equal, i.e., $\theta_i=\theta'_i$ for $1\leq i\leq 6$. Four pairs of corresponding edge lengths are equal, i.e., $l_i=l'_i$ for $i=2,3,5,6$, but the remaining two edge lengths are different, i.e., $l_i\neq l'_i$ for $i=1,4$.  These polygons can be constructed by attaching a smaller spherical triangle above and a larger one below, both positioned differently along the same edge. Note that  this example appears in the vertex figure at a partially-flat vertex since the edges corresponding to $l_1$ and $l_4$ are collinear.

\end{exam}	
\begin{figure}[H]
	\begin{center}
		\scriptsize
		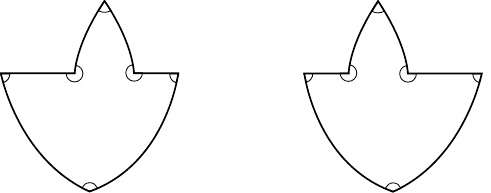
		\caption{Two spherical polygons differing only in two corresponding edge lengths}
		\label{fig:vertexshape_counterexample}
	\end{center}
\end{figure}

We have already established  that every spherical triangle with an edge length of $\pi$ cannot be determined by its interior angle, in Proposition \ref{singular3-gon}. A similar problem also arises in spherical $n$-gons for $n>3$. We present an example of a flexible spherical quadrilateral where all interior angles are specified, but the length of an edge is given by $\pi$.
Note that, in the vertex figure of a polyhedron with strictly-convex faces, this situation does not occur.
\begin{exam}\label{sph-quad}
	Consider   the spherical polygon of four edges in the below Figure~\ref{fig:SphquadPi}. There are  
	four interior angles $\frac{\pi}{2}, \frac{3\pi}{2},\frac{\pi}{2},\frac{3\pi}{2}$ (more generally, $\alpha, \beta+\pi,\beta, \alpha+\pi$ by rotation of the edges of length $\pi$ with fixed four vertices) and four edge lengths $\theta, \pi,\theta,\pi$.  Here we can change the value of $\theta$ freely. The four point $A, B, C$ and $D$ exist on a great circle.
	
\end{exam}
\begin{figure}[H]
	\begin{center}
		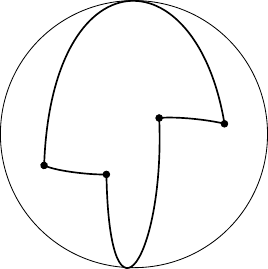	     		
		\caption{A family of non-isomorphic spherical quadrilaterals sharing two edge lengths  and all dihedral angles where the two edge lengths are $\pi$}
		\label{fig:SphquadPi}  	
	\end{center}
\end{figure}

The vertex figure of a strongly-rigid (or rigid) 4-valent vertex is a spherical quadrilateral with specified interior angles and one known edge length.  This information is sufficient to uniquely determine the spherical convex quadrilateral according to Theorem~\ref{rigiditySphPolygon}.
However, in the nonconvex case, there exist nonconvex spherical quadrilaterals that share one edge length and all interior angles, as shown in the following example.

\begin{exam}\label{nonconvexQuad}
	Let us consider two spherical quadrilaterals  on the unit sphere in $\RR ^3$ as in Figure~\ref{fig:nonconvexQuad} where  the Cartesian coordinates can be explicitly given for small positive real numbers $\theta$ and $\phi$.
	\begin{figure}[H]
		\begin{align*}
			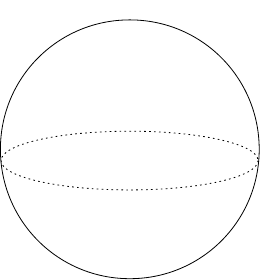 ~~~~~~~&~~~~~~~~
			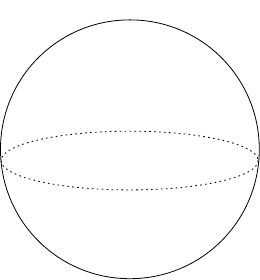 \\	
			\left\{ 
			\begin{aligned}
				A &=(0,0,1) \\
				B &= (\cos \theta, -\sin \theta, 0)   \\
				C &= (-1,0,0) \\
				D &= (\cos \phi \cos \theta,\cos \phi \sin \theta, -\sin \phi )   	
			\end{aligned}	
			\right. ~~~&~~
			\left\{ 
			\begin{aligned}
				A' &=(0,0,1) \\
				B' &=(\cos 3\theta, -\sin 3\theta,0)  \\
				C' &= (-1,0,0) \\
				D' &=(\cos \phi \cos \theta,-\cos \phi \sin \theta, \sin \phi  )    	
			\end{aligned}	
			\right.
		\end{align*}
		
		\caption{Two non-isomorphic  nonconvex spherical polygons   sharing one edge length and all interior angles}
		\label{fig:nonconvexQuad}  	
	\end{figure}
	
	We can compute all interior angles and edge lengths by spherical trigonometry.
	Let us take the bounded region of the polygon as the southern part in Figure~\ref{fig:nonconvexQuad} to determine interior angles. 
	All pairs of interior angles are the same, i.e.
	\begin{align*}
		\angle A&=\angle A'=2 \theta, & \angle B &=\angle B'=\frac{3\pi}{2},\\
		\angle C&=\angle C'=\pi-\cos^{-1}(\frac{\cos \phi \sin \theta}{\sqrt{1-\cos^2 \phi \cos^2 \theta }}), 
		& \angle D &=\angle D'=\pi+\cos^{-1}(\frac{\cos \theta \sin \phi }{\sqrt{1-\cos^2 \phi \cos^2 \theta}})
	\end{align*}   
	and a pair of edge lengths are also same, $\edgepath{AB}=\edgepath{A'B'}=\frac{\pi}{2}$, 
	but the other pairs of edge lengths are different, for instances, 
	\begin{align*}
		\edgepath{BC}&=\pi-\theta, &\edgepath{B'C'}&=\pi-3\theta,\\
		\edgepath{CD}&=\pi-\cos^{-1}(\cos \phi \cos \theta), &\edgepath{C'D'}&=\pi+\cos^{-1}(\cos \phi \cos \theta),\\
		\edgepath{DA}&=\frac{\pi}{2}+ \phi,  &\edgepath{D'A'}&=\frac{\pi}{2}- \phi.
	\end{align*}
	
\end{exam}

Note that $\edgepath{C'D'} > \pi$, so we need to exclude this situation to establish uniqueness.
This is another reason to require the convex face condition, which ensures that all edge lengths are less than $\pi$.
In fact, we establish that if the interior angles of a spherical quadrilateral lie within the range $(0,\pi)\cup (\pi,2\pi)$, and the edge lengths lie within $(0,\pi)$, then the quadrilateral is uniquely determined by all its interior angles and  one edge length. These conditions are guaranteed by the requirements of strictly-convex faces and non-flat edges.

\subsection{Rigidity for spherical quadrilaterals}

Finally, we prove a rigidity theorem for spherical quadrilaterals, which is a generalization of Theorem~\ref{rigiditySphPolygon}. The  idea of the proof is quite straightforward: we examine all possible constructions under the given constraints, and then confirm that no simultaneous realizations can occur among them.

\begin{thm}\label{thm:sphquadRigid}
Let  $\polygon{v_1\. v_2\. v_3\. v_4 }$ be a spherical quadrilateral, possibly nonconvex and self-intersecting, that satisfies the following conditions: (i) all edge lengths are less than $\pi$, and (ii) all interior angles are not singular.
Under these conditions, the spherical quadrilateral  is uniquely determined by one edge length and all interior angles.
\end{thm}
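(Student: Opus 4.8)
Since the convex case is already Theorem~\ref{rigiditySphPolygon}, the content is the nonconvex and self-intersecting cases. The plan is to cut the quadrilateral along a diagonal into two triangles, use the rigidity of (possibly nonconvex) spherical triangles from Corollary~\ref{trianglerigidity} together with the generalized trigonometry of Proposition~\ref{sinecosine} to see that the whole configuration is governed by a single real parameter, and then show that the one remaining closing equation admits at most one admissible solution.

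Concretely, normalize so that the edge $v_1v_2$, whose length $l_1<\pi$ is prescribed, is fixed on $S^2$ with a chosen orientation and interior side, and cut along the diagonal $v_1v_3$, producing the triangles $T_1=\polygon{v_1\.v_2\.v_3}$ and $T_2=\polygon{v_1\.v_3\.v_4}$ (oriented so that the interior angle of $T_i$ at each $v_k$ matches the corresponding interior angle of the quadrilateral; the case where $v_1$ and $v_3$ coincide or the diagonal degenerates is handled separately, e.g.\ by using the diagonal $v_2v_4$ instead, or by a continuity argument as at the end of the proof of Proposition~\ref{sinecosine}). Since the diagonal emanates neither from $v_2$ nor from $v_4$, the angle of $T_1$ at $v_2$ is exactly $\alpha_2$ and the angle of $T_2$ at $v_4$ is exactly $\alpha_4$. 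Introduce the free parameter $\beta$, the angle of $T_1$ at $v_1$. Then $T_1$ carries the $ASA$-data $(\beta, l_1, \alpha_2)$ (the side $v_1v_2$ being the included side of the angles at $v_1$ and $v_2$), so by generalized trigonometry its diagonal length $d=|v_1v_3|$, its third side $l_2=|v_2v_3|$ and its angle $\delta$ at $v_3$ are determined as functions of $\beta$ up to the two-element ambiguity of Corollary~\ref{trianglerigidity}; the requirement $l_2<\pi$ from hypothesis~(i), together with the non-singularity hypothesis~(ii) (which in particular forbids $l_2=\pi$), selects the correct branch. Next, $T_2$ is determined up to finitely many choices by the triple $d=d(\beta)$, the angle $\alpha_4$ at $v_4$, and the angle $\beta'$ at $v_1$, where $\beta'$ is $\alpha_1$ combined with $\beta$ — the combination being $\alpha_1-\beta$, $\beta-\alpha_1$, or $2\pi-\alpha_1-\beta$ depending on the combinatorial type of the quadrilateral and on whether the diagonal lies inside it — so its angle $\delta'$ at $v_3$ becomes an explicit function of $\beta$ on each branch, again with the branch pinned down by requiring all edge lengths $<\pi$. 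The prescribed data is realizable exactly for those $\beta$ at which the angles at $v_3$ close up, i.e.\ $\delta(\beta)$ and $\delta'(\beta)$ combine to $\alpha_3$, and for which all four edge lengths are $<\pi$.

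The theorem thus reduces to showing that this single scalar closing equation has \emph{at most one} admissible solution $\beta$. This is the heart of the matter, and I would establish it by a finite case analysis over the combinatorial types of the quadrilateral (convex; reflex at one vertex; self-intersecting of each kind) and the finitely many branches coming from Corollary~\ref{trianglerigidity}, proving on each admissible $\beta$-interval either that the closing function is strictly monotone — differentiating through the dual cosine rule, equivalently via a Schläfli-type identity, so that its derivative keeps a constant sign on the interval — or that the only competing solution forces an edge length $\ge\pi$ or a singular interior angle and is therefore excluded by (i) or (ii). The counterexamples of Example~\ref{sph-quad} and Example~\ref{nonconvexQuad} show that both hypotheses are genuinely used and pin down where: relaxing~(ii) resurrects the flex with an edge of length $\pi$, and relaxing~(i) resurrects the family in which $\edgepath{C'D'}>\pi$; the argument must invoke (i) and (ii) precisely at the $\beta$-values where these families would otherwise reappear.

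\textbf{Main obstacle.} The delicate part is the bookkeeping at $v_1$ and $v_3$: how the interior angles $\alpha_1$ and $\alpha_3$ decompose into the two triangle-angles when the quadrilateral is nonconvex or self-intersecting, when the diagonal lies outside the quadrilateral, or when $T_1$ and $T_2$ overlap — and then verifying the monotonicity (or the edge-length obstruction) uniformly across all of these cases, including the limiting behavior as an edge length or an auxiliary angle approaches a singular value. This is the ``most complicated and tedious part'' flagged in the introduction, and I expect the write-up to proceed type by type rather than through a single uniform argument.
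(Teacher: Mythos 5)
Your decomposition is genuinely different from the paper's, and as written it has a gap at exactly the point you flag as ``the heart of the matter.'' The paper does not cut along the diagonal $\edgepath{v_1\.v_3}$ and does not solve a one-parameter closing equation. Instead it exploits the fact that the known edge $\edgepath{v_1\.v_2}$ together with the two adjacent interior angles $\theta_1,\theta_2$ already determines the two great circles carrying $\edgepath{v_1\.v_4}$ and $\edgepath{v_2\.v_3}$; their intersection point $v_5$ is an auxiliary vertex determined by the data, the triangle $\polygon{v_1\.v_2\.v_5}$ is fixed by $ASA$, and the residual triangle $\polygon{v_5\.v_3\.v_4}$ then has \emph{all three} angles known, so it is fixed by $AAA$ (Corollary~\ref{trianglerigidity}). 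The continuum of candidate quadrilaterals thereby collapses to finitely many discrete configurations (which arc of each great circle carries $v_3$ resp.\ $v_4$, self-intersecting or not), and these are told apart by discrete invariants of the given data --- chiefly whether $\theta_3$ and $\theta_4$ are greater or less than $\pi$ --- together with bigon arguments that force an edge length $\geq\pi$ in the excluded configurations. No monotonicity of any transcendental function is ever needed.

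In your setup, by contrast, the statement ``the closing equation $\delta(\beta)\pm\delta'(\beta)=\alpha_3$ has at most one admissible solution'' \emph{is} the theorem, and you only assert it, proposing to verify monotonicity ``by differentiating through the dual cosine rule'' case by case. That cannot be a formal consequence of the trigonometric identities alone: Examples~\ref{sph-quad} and~\ref{nonconvexQuad} show the closing equation genuinely acquires extra (even continuous families of) solutions once hypothesis (i) or (ii) is dropped, so hypotheses (i)--(ii) must enter the monotonicity argument in an essential and so far unspecified way; saying they ``must be invoked precisely at the $\beta$-values where these families would otherwise reappear'' is circular. A second concrete problem: your triangle $T_2=\polygon{v_1\.v_3\.v_4}$ is specified by the side $d=|v_1v_3|$, the adjacent angle $\beta'$ at $v_1$, and the \emph{opposite} angle $\alpha_4$ --- a side--angle--angle (SSA-type ambiguous) configuration, not $ASA$. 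The two branches for $\delta'$ and $|v_3v_4|$ are related by $x\mapsto\pi-x$ via the Sine rule, so both can satisfy ``all edge lengths $<\pi$,'' and the diagonal $d$ is not an edge of the quadrilateral, so hypothesis (i) does not select a branch there either. This is precisely the ill-conditioning the paper's $v_5$-construction is designed to avoid, by arranging that the second triangle is pinned down by $AAA$ rather than by side--angle--angle. To salvage your route you would need to actually prove the branch selection and the monotonicity on each combinatorial type; as it stands the proposal is a plausible strategy, not a proof.
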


\begin{proof}
	Let $\theta_i$ be the interior angle at $v_i$ for $i=1,\dots,4$.
	Without loss of generality, we assume that the length of the edge $\edgepath{v_1\. v_2}$ is given 	
and	there remains essentially three cases as follows, 
	\begin{enumerate}[\hspace{7em} (i)]
		\item $\theta_1  < \pi $ and $\theta_2 < \pi$ ,
		\item $\theta_1< \pi $ and $\theta_2 > \pi$ without self-intersection,  	
		\item $\theta_1< \pi $ and $\theta_2 > \pi$ with self-intersection.
	\end{enumerate}
	
In case (i), we consider drawing a geodesic with interior angle $\theta_i$ to the edge $\edgepath{v_1\. v_2}$ at each $v_i$ for $i=1,2$. Each geodesic forms a great circle on $\mathbb{S}^2$ and intersects each other twice. We denote the first intersecting point as $v_5$. Each great circle is divided into two geodesic segments by $v_1$ (or $v_2$) and $v_5$.
The vertex $v_3$ (or $v_4$) can be placed on one of the two geodesic segments. We denote the position of $v_3$ (or $v_4$) as $y$ and $y'$ (or $x$ and $x'$) as shown in Figure~\ref{fig:quad1}. . There are exactly four possible realizations for the pair of vertices $(v_3, v_4)$: one is $(y, x)$ and the others are $(y,x')$, $(y',x)$, ${{(y',x')}}$.
	
	\begin{figure}[h!]
		\begin{center}
			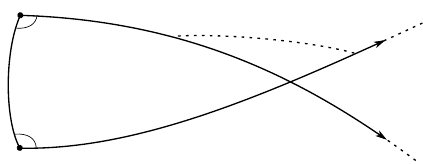
			\caption{The cases of the quadrilateral $\polygon{v_1\. v_2\. v_3\. v_4}$ with  $\theta_1  < \pi $ and $\theta_2 < \pi$} \label{fig:quad1}
		\end{center}
	\end{figure} 
	
As is clear, these four possible realizations of $(v_1 v_2 v_3 v_4)$ cannot occur simultaneously because the configuration $(\theta_3,\theta_4)$ of the interior angles at $v_3$ and $v_4$ are all different: (less than $\pi$, less than $\pi$), (greater than $\pi$, less than $\pi$), (less than $\pi$, greater than $\pi$), and (greater than $\pi$, greater than $\pi$), respectively.
For each of the four cases, the quadrilateral $\polygon{v_1 v_2 v_3 v_4}$ can be considered as the union or subtraction of two triangles, namely $\polygon{v_1\.v_2\.v_5}$ and $\polygon{v_5\.v_3\.v_4}$. For example, if we choose $y'$ and $x$ for $v_3$ and $v_4$, respectively, then $\polygon{v_1\. v_2\. v_3\. v_4} = \polygon{v_1\. v_2\. v_5} \cup \polygon{v_5\. v_3\. v_4}$, as illustrated in Figure~\ref{fig:quad1}.
The triangle $\polygon{v_1\. v_2\. v_5}$ is uniquely determined by $ASA$ condition of Corollary~\ref{trianglerigidity} because all edge lengths are less than $\pi$. Similarly, the triangle $\polygon{v_5\.v_3\.v_4}$ is also uniquely determined by $AAA$ condition because all interior angles are less than $\pi$. \vspace{0.3em}
	
	Let us consider case (ii) where $\theta_1 < \pi$ and $\theta_2 > \pi$, without self-intersection. Based on this constraint, the possible shape of the quadrilateral $\polygon{v_1\. v_2\. v_3\. v_4}$ is as shown in Figure~\ref{fig:NonConQuad}.
	We extend the edge $\edgepath{v_3\. v_2}$ geodesically from $v_2$ into the interior direction of $\polygon{v_1\. v_2\. v_3\. v_4}$. The extended geodesic must intersect the boundary of $\polygon{v_1\. v_2\. v_3\. v_4}$, as illustrated in the figure~\ref{fig:NonConQuad}.
\begin{figure}[h!]
	\begin{center}
		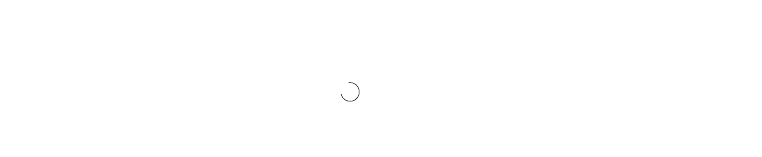
		\caption{The cases of quadrilaterals $\polygon{v_1\. v_2\. v_3\. v_4}$  without self-intersection, where $\theta_1 < \pi$ and $\theta_2 > \pi$}
		\label{fig:NonConQuad}
	\end{center}
\end{figure}
Let the intersection point be denoted by $v_5$. If $v_5$ lies on the edge $\edgepath{v_1\. v_2}$ or $\edgepath{v_3\. v_4}$ as demonstrated in Figure~\ref{fig:NonConQuad}\,(a), it would create a bigon, causing the length of either $\edgepath{v_1\. v_2}$ or $\edgepath{v_3\. v_4}$ to exceed $\pi$. This would violate the assumption that all edge lengths are less than~$\pi$. Therefore, such configurations are not allowed according to our assumptions.

Therefore, the only possible configurations of case (ii) are cases (b) and (c) in Figure~\ref{fig:NonConQuad}. 
For case (b), the quadrilateral $\polygon{v_1 v_2 v_3 v_4}$ encompasses  a hemisphere sharing the geodesic segment $\edgepath{v_2 v_3}$, and we obtain that $\theta_3 > \pi$.
For case (c) on the other hand, 
the interior angles of the triangle $\polygon{v_5\. v_3\. v_4}$ at $v_5$ and $v_4$, as well as the length of $\edgepath{v_5\. v_4}$, are all less than~$\pi$. 
Therefore, by the Sine rule,
we deduce  $\theta_3 <\pi$.
Consequently, cases (b) and (c) cannot occur simultaneously, and it suffices to independently check unique realization in each case.
For case (b), we can decompose the quadrilateral $\polygon{v_1\. v_2\. v_3\. v_4}$ into two pieces: one is a hemisphere and the other is the complementary quadrilateral  $\polygon{v_1\. \widehat{v_2\.v_3}\. v_4}$, where $\widehat{v_2\.v_3}$ denotes the complement segment of $\overline{v_2\.v_3}$ of  the original $\polygon{v_1\. v_2\. v_3\. v_4}$.  
The quadrilateral $\polygon{v_1\. \widehat{v_2\.v_3}\. v_4}$ is 
 is uniquely determined by  the previous case (i), and 
 thus, so is $\polygon{v_1\. v_2\. v_3\. v_4}$.
For case (c), we have $\polygon{v_1\. v_2\. v_3\. v_4} = \polygon{v_1\. v_2\. v_5} \cup \polygon{v_3\. v_4\. v_5}$ and each triangle is uniquely determined by Corollary \ref{trianglerigidity}. 
Specifically, 
the triangle $\polygon{v_1\. v_2\. v_5}$ is uniquely determined by  $ASA$ condition because the length of $\edgepath{v_1\.v_5}$ is less than $\pi$. The triangle $\polygon{v_3\. v_4\. v_5}$ is also uniquely determined by $AAA$ condition because the length of $\edgepath{v_5\.v_4}$ is less than $\pi$.  
	\vspace{0.3em}
	
Let's examine  case (iii) where $\theta_1 < \pi$ and $\theta_2 > \pi$ with self-intersection. 
First, let us consider the case where $\edgepath{v_1\.v_4}$ and $\edgepath{v_2\.v_3}$ intersect, as illustrated in Figure~\ref{fig:intersect4gon}\,(a).
Let the edge $\edgepath{v_3\. v_2}$ extend geodesically from $v_2$ into the interior direction. The extended geodesic must intersect $\edgepath{v_1\.v_2}$ or $\edgepath{v_1\.v_4}$  at $v_5$, as shown in Figure~\ref{fig:intersect4gon}\,(a). 
In either case, the appearance of a bigon $\polygon{v_2\.v_5}$ results in the lengths of $\edgepath{v_1\.v_2}$ or $\edgepath{v_1\.v_4}$ exceeding $\pi$, thereby making case (a) impossible.

	\begin{figure}[h!]
	\begin{center}
		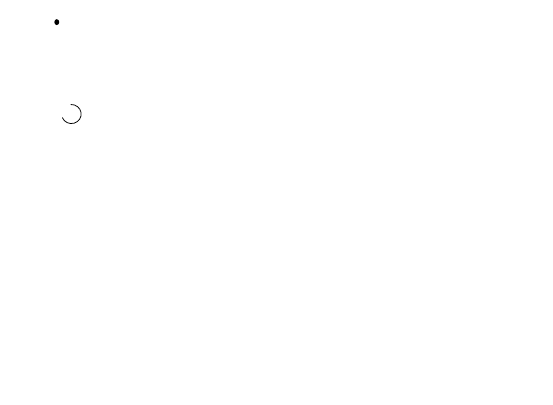
		\caption{ The cases of  self-intersecting  quadrilaterals $(v_1 v_2 v_3 v_4)$ with  $\theta_1  < \pi $ and $\theta_2 > \pi$ }
		\label{fig:intersect4gon}	
	\end{center}
\end{figure}
Recall that adjacent edges cannot intersect and any pair of edges cannot intersect twice because this would involve an edge greater than $\pi$.
As a result,   $\edgepath{v_1\.v_2}$ and $\edgepath{v_3\.v_4}$ must have  exactly one intersection. 
Depending on the direction of the intersection, we  divide this into case (b), and cases (c) or (d) as illustrated in Figure \ref{fig:intersect4gon}. 
Case (b) represents to the situation where $\vect{v_4\.v_3}$ intersects $\vect{v_1\.v_2}$ from its right side, while cases (c) and (d) represents to situations where the intersection occurs from the left side of $\vect{v_1\.v_2}$.
In case (b),  we find the length of $\edgepath{v_3\.v_4}$ to exceed $\pi$, since a bigon $\polygon{v_4\.v_5}$ appears when $\edgepath{v_4\.v_1}$ is extended from $v_1$ in the interior direction. Thus, case (b) is  impossible.
Case (c) corresponds to the situation where the interior angle $\theta_4$ at $v_4$ is greater than $\pi$, while case (d) refers to when $\theta_4$ is less than $\pi$.
In case (c), using a similar approach to the previous cases,	
 we also find the length of $\edgepath{v_1\.v_2}$ or $\edgepath{v_3\.v_4}$  to exceed $\pi$,  since a bigon $\polygon{v_1\.v_5}$ or $\polygon{v_4\.v_5}$ appears when $\edgepath{v_1\.v_4}$ is extended from $v_1$ in the interior direction as shown in Figure~ \ref{fig:intersect4gon}~(c). Thus, case (c) is also impossible.
	
Now, let us consider case (d) as the only possible configuration for case (iii).
We extend the two edges $\edgepath{v_1\. v_4}$ and $\edgepath{v_3\. v_2}$ in the right directions as illustrated in Figure~\ref{fig:intersect4gon}~(d). The two geodesics intersect twice, and we choose the first intersection point as the new vertex $v_5$.
The triangle $\polygon{v_1\. v_2\. v_5}$ is uniquely determined by $ASA$ condition of Proposition~\ref{trianglerigidity} since the length of $\edgepath{v_1\.v_5}$ is less than $\pi$. The triangle $\polygon{v_4\. v_3\. v_5}$ is also uniquely determined by $AAA$ condition since  the length of $\edgepath{v_3\. v_4}$ is less than $\pi$. 
By the uniqueness of the two triangles $\polygon{v_1\. v_2\. v_5}$ and $\polygon{v_4\. v_3\. v_5}$, we obtain the uniqueness of $\polygon{v_1\. v_2\. v_3\. v_4}$ for case~(iii). Note that we deduce that $\theta_4 <\pi$ and  $\theta_3>\pi$  for case (iii).     
	\vspace{0.3em}
	
To complete the proof, we have to check possibility of simultaneous realizations of  case (ii) and case (iii). 
We will demonstrate that $\theta_3$ or $\theta_4$ must differ, thus preventing  simultaneous realization of cases (ii) and (iii).
For case (ii)-(b), we remove the hemisphere and  reconsider the resulting quadrilateral $\polygon{v_1\.\widehat{v_2\. v_3}\. v_4}$. The three interior  angles of $\polygon{v_1\.\widehat{v_2\. v_3}\. v_4}$ at $v_1$, $v_2$, and $v_3$ all are less than~$\pi$. If $\theta_4<\pi$ then all edge lengths of the quadrilateral  are less than~$\pi$ according to Corollary~\ref{lengthSphPolygon}.
	Then, the original edge length $\edgepath{v_2\. v_3}$ becomes greater than $\pi$ and it  contradicts the assumption that all edge lengths  are less than $\pi$. 
	Therefore, it must be  $\theta_4 >\pi$: simultaneous realization of (ii)-(a) and (iii) is impossible.
	In case of (ii)-(c), we have already shown that $\theta_3 < \pi$, making   simultaneous realization with (iii) impossible.
	As a result, we conclude that simultaneous realization of case (ii) and case (iii) is completely impossible. 
	
	Considering all the discussed points, we have ruled out all possibilities for non-unique realizations. This ultimately proves the uniqueness of the quadrilateral $\polygon{v_1\. v_2\. v_3\. v_4}$.
\end{proof} 

We may extend the same theorem to a general $n$-gon with $n>4$.  Providing a rigorous proof is, however, anticipated to be quite challenging. 
 Since it's not critical to our main result of establishing the rigidity of 3-dimensional polyhedra, we have not addressed this issue in the present paper. However, we recognize this question as significant in its own right and leave it as an open problem, as stated in Conjecture~\ref{conj:polygon}.

\bibliographystyle{alpha}
\bibliography{rigidityCK}

\begin{thebibliography}{KRH33}

\bibitem[Ale05]{alexandrov_convex_2005}
A.~D. Alexandrov.
\newblock {\em Convex polyhedra}.
\newblock Springer {Monographs} in {Mathematics}. Springer-Verlag, Berlin,
  2005.

\bibitem[Bel23]{belletti_volume_2023}
Giulio Belletti.
\newblock The volume conjecture for polyhedra implies the {Stoker} conjecture.
\newblock {\em Geometriae Dedicata}, 217(4):77, June 2023.

\bibitem[Con77]{connelly_counterexample_1977}
Robert Connelly.
\newblock A counterexample to the rigidity conjecture for polyhedra.
\newblock {\em Publications Mathématiques de l'IHÉS}, 47:333--338, 1977.

\bibitem[Fis07]{fisher_local_2007}
David Fisher.
\newblock Local rigidity of group actions: past, present, future.
\newblock In {\em Dynamics, ergodic theory, and geometry}, volume~54 of {\em
  Math. {Sci}. {Res}. {Inst}. {Publ}.}, pages 45--97. Cambridge Univ. Press,
  Cambridge, 2007.

\bibitem[Glu75]{gluck_almost_1975}
Herman Gluck.
\newblock Almost all simply connected closed surfaces are rigid.
\newblock In {\em Geometric topology ({Proc}. {Conf}., {Park} {City}, {Utah},
  1974)}, pages 225--239. Lecture Notes in Math., Vol. 438. Springer, Berlin,
  1975.

\bibitem[Gr{\"u}03]{grunbaum_convex_2003}
Branko Gr{\"u}nbaum.
\newblock {\em Convex polytopes}, volume 221 of {\em Graduate {Texts} in
  {Mathematics}}.
\newblock Springer-Verlag, New York, second edition, 2003.
\newblock Prepared and with a preface by Volker Kaibel, Victor Klee and Günter
  M. Ziegler.

\bibitem[IS10]{izmestiev_infinitesimal_2010}
Ivan Izmestiev and Jean-Marc Schlenker.
\newblock Infinitesimal rigidity of polyhedra with vertices in convex position.
\newblock {\em Pacific J. Math.}, 248(1):171--190, 2010.

\bibitem[KRH33]{klein_ubersicht_1933}
Felix Klein, Ernst Ritter, and Otto Haupt.
\newblock Übersicht über die sphärische {Trigonometrie} [*].
\newblock In {\em Vorlesungen über die {Hypergeometrische} {Funktion}:
  {Gehalten} an der {Universität} {Göttingen} im {Wintersemester} 1893/94},
  Die {Grundlehren} der mathematischen {Wissenschaften}, pages 158--196.
  Springer, Berlin, Heidelberg, 1933.

\bibitem[MM11]{mazzeo_infinitesimal_2011}
Rafe Mazzeo and Grégoire Montcouquiol.
\newblock Infinitesimal rigidity of cone-manifolds and the {Stoker} problem for
  hyperbolic and {Euclidean} polyhedra.
\newblock {\em J. Differential Geom.}, 87(3):525--576, 2011.

\bibitem[Pak09]{pak_lectures_2009}
Igor Pak.
\newblock Lectures on discrete and polyhedral geometry.
\newblock {\em Preliminary version available at author's web page}, 2009.

\bibitem[Rat06]{ratcliffe_foundations_2006}
John~G. Ratcliffe.
\newblock {\em Foundations of hyperbolic manifolds}, volume 149 of {\em
  Graduate {Texts} in {Mathematics}}.
\newblock Springer, New York, second edition, 2006.

\bibitem[Rot81]{roth_rigid_1981}
B.~Roth.
\newblock Rigid and {Flexible} {Frameworks}.
\newblock {\em The American Mathematical Monthly}, 88(1):6--21, 1981.

\bibitem[Sch00]{schlenker_dihedral_2000}
J.-M. Schlenker.
\newblock Dihedral angles of convex polyhedra.
\newblock {\em Discrete \& Computational Geometry. An International Journal of
  Mathematics and Computer Science}, 23(3):409--417, 2000.

\bibitem[Sto68]{stoker_geometrical_1968}
J.~J. Stoker.
\newblock Geometrical problems concerning polyhedra in the large.
\newblock {\em Communications on Pure and Applied Mathematics}, 21:119--168,
  1968.

\end{thebibliography}

\end{document}